\documentclass[11pt]{amsart}
\usepackage{geometry}                
\usepackage{amsmath}
\usepackage{amscd}
\usepackage{amsthm}
\geometry{letterpaper}                   
\usepackage{graphicx}
\usepackage{amssymb}
\usepackage{epstopdf}
\DeclareGraphicsRule{.tif}{png}{.png}{`convert #1 `dirname #1`/`basename #1 .tif`.png}

\theoremstyle{plain}
\newtheorem{Thm}{Theorem}[section]

\newtheorem{Prop}[Thm]{Proposition}
\newtheorem{Cor}[Thm]{Corollary}
\newtheorem{Lem}[Thm]{Lemma}

\theoremstyle{definition}
\newtheorem{Defn}[Thm]{Definition}
\newtheorem{Expl}[Thm]{Example}

\theoremstyle{Remark}
\newtheorem{Rem}[Thm]{Remark}
\newtheorem{Que}[Thm]{Question}

\numberwithin{equation}{section}

\title{Non-commutative deformations of simple objects \\
in a category of perverse coherent sheaves}
\author{Yujiro Kawamata}
%
\begin{document}
\maketitle

\tableofcontents

\begin{abstract}
We determine versal non-commutative deformations of some simple collections in the categories
of perverse coherent sheaves arising from tilting generators for projective morphisms.
13D09, 14B10, 14E30, 16E35. 
\end{abstract}

\section{Introduction}

Let $k$ be an algebraically closed field of characteristic $0$, and
let $f: Y \to X$ be a projective morphism of Noetherian $k$-schemes such that $X = \text{Spec}(R)$ is an affine scheme.
A locally free coherent sheaf $P$ on $Y$ is called a {\em tilting generator} if the following conditions are satisfied: 
(1) all higher direct images of $\mathcal{E}nd(P)$ for $f$ vanishes, (2) $P$ generates the bounded 
derived category of coherent sheaves $D^b(\text{coh}(Y))$ (see Definition~\ref{tilting}). 
Then the derived Morita equivalence theorem of Bondal and Rickard (\cite{Bondal}, \cite{Rickard}) 
tells us that there is an equivalence of triangulated categories
$D^b(\text{coh}(Y)) \cong D^b(\text{mod-}A)$, where $A = f_*\mathcal{E}nd(P)$ is a coherent sheaf of associative 
$\mathcal{O}_X$-algebras.
Let $\text{Perv}(Y/X)$ be the abelian subcategory of $D^b(\text{coh}(Y))$ corresponding to the category of
coherent right $A$-modules $(\text{mod-}A)$. 

We would like to call $\text{Perv}(Y/X)$ the category of {\em perverse coherent sheaves}.
By definition, the category of coherent sheaves $(\text{coh}(Y))$ is more geometric and 
$\text{Perv}(Y/X)$ more algebraic.
For example, for a $k$-valued point $x_0 \in X$, the set of simple objects in $(\text{coh}(Y))$ 
above a simple object $\mathcal{O}_{x_0}$ in $(\text{coh}(X))$ is identified as 
the set-theoretic fiber $f^{-1}(x_0)$ and is an infinite set in general.
On the other hand, such a set in $\text{Perv}(Y/X)$ is finite since $A$ is coherent.

The original perverse sheaves (\cite{BBD}) are complexes in a derived category of constructible sheaves
which correspond to sheaves of regular holonomic $D$-modules by the Riemann-Hilbert correspondence
(\cite{Kashiwara}, \cite{Mebkout}).
It was defined by generalizing the intersection homology theory of Goresky and MacPherson (\cite{GM})
which was discovered in the pursuit of homology theory for singular spaces which behaves better under 
the Poincare duality. 
Our definition is similar in that the perverse coherent sheaves are complexes of coherent sheaves which
correspond to sheaves over associative algebras by the Bondal-Rickard equivalence.
The perverse coherent sheaves possess more algebraic nature due to the construction, and 
we can expect that they behave better under certain problems.

In this paper, we consider the multi-pointed non-commutative deformations of sets of simple objects 
in $\text{Perv}(Y/X)$ (\cite{NC}),
and determine the versal deformations in some cases.

The versal deformation of the simple objects over a closed point of the base space recovers the associative algebra:

\begin{Thm}[=Theorem~\ref{recover}]
Let $\{s_j\}_{j=1}^m$ be the set of all simple objects in $\text{Perv}(Y/X)$ above a closed point $x_0 \in X$.
Let $\hat P$ be the completion of the direct sum of all 
indecomposable projective objects $\{P_i\}_{i=1}^m$ in the category of perverse coherent sheaves 
$\text{Perv}(\hat Y/\hat X)$
on the completion $\hat Y = Y \times_X \hat X$ above $x_0$.
Then $\hat P$ is the versal deformation of the simple collection $\bigoplus_{j=1}^m s_j$ with the
parameter algebra $\hat A = f_*\mathcal{E}nd(\hat P)$.
\end{Thm}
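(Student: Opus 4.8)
The plan is to transport the whole problem to the algebra side via the Bondal--Rickard equivalence and then to recognize the sought versal family as the regular representation of the completed basic endomorphism algebra. First I would replace $X$ by $\hat X = \text{Spec}(\hat R)$, where $\hat R$ is the completion of $R$ at the maximal ideal of $x_0$, and use the equivalence $\text{Perv}(\hat Y/\hat X) \cong (\text{mod-}\hat A)$ with $\hat A = f_*\mathcal{E}nd(\hat P)$. Under this equivalence the simple objects $s_1,\dots,s_m$ correspond to the simple right $\hat A$-modules $S_1,\dots,S_m$, the indecomposable projective objects $P_1,\dots,P_m$ to their projective covers, and $\hat P = \bigoplus_i P_i$ to the regular representation $\hat A_{\hat A}$ of the basic algebra $\hat A$. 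Because we have completed at the closed point $x_0$, the residue field is $k$ and $\hat A$ is a pseudocompact $k$-algebra with $\hat A / J(\hat A) \cong k^m$, where $J(\hat A)$ is the Jacobson radical; the coherence of $A$ guarantees that the $\text{Ext}$ groups between the $S_j$ are finite-dimensional. This is exactly the pro-Artinian environment in which the versal non-commutative deformation of the collection $\{S_j\}$ is set up in \cite{NC}, so the assertion becomes the purely algebraic statement that the versal deformation of $S = \bigoplus_j S_j$ is pro-represented by $\hat A$ with universal family $\hat P$.

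Second, I would verify that $\hat P$ is genuinely a deformation of $S$ over the base $\hat A$. Flatness is immediate since $\hat P = \bigoplus_i P_i$ is projective, hence flat, over $\hat A$; and reduction modulo the radical gives $\hat P \otimes_{\hat A} k^m = \bigoplus_i P_i/P_i J(\hat A) \cong \bigoplus_j S_j = S$ by the defining property of projective covers. Thus $\hat P$, together with its tautological $\hat A$-action, is a flat family over $\hat A$ lifting $S$, i.e. a point of $\text{Def}_S(\hat A)$ once $\hat A$ is regarded as a pro-object in the category of $m$-pointed Artinian algebras through its radical filtration $\hat A = \varprojlim_n \hat A/J(\hat A)^n$.

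Third, and this is the crux, I would prove versality by exhibiting a natural bijection
\[
\text{Def}_S(\Gamma) \;\cong\; \text{Hom}(\hat A, \Gamma)
\]
for every $m$-pointed Artinian algebra $\Gamma$, the homomorphisms being continuous and compatible with the augmentations to $k^m$. Given $\phi\colon \hat A \to \Gamma$, base change produces the flat family $\hat P \otimes_{\hat A} \Gamma$, which lifts $S$; conversely, a deformation $\mathcal{M}$ over $\Gamma$ carries an action of $\Gamma$, and lifting this action through the surjection $\Gamma \twoheadrightarrow k^m$ against the projective covers $P_i$ yields a classifying homomorphism $\hat A \to \Gamma$. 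The main obstacle is to show that these assignments are mutually inverse and natural: surjectivity amounts to proving that \emph{every} $\Gamma$-deformation arises by base change from $\hat P$, which I would establish by induction on the radical length of $\Gamma$, using projectivity of the $P_i$ to lift homomorphisms and idempotents along each small extension, and using flatness to guarantee that the lifted family stays locally free. At first order this bijection restricts to the canonical identification of the tangent space $\text{Def}_S(k^m[\varepsilon]) \cong \text{Ext}^1_{\hat A}(S,S)$ with the arrow space of $\hat A$ dual to $J(\hat A)/J(\hat A)^2$, while the obstruction map is governed by $\text{Ext}^2_{\hat A}(S,S)$ together with the Massey products, which encode precisely the relations of $\hat A$. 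Invoking the existence of a versal hull from \cite{NC} and a Schlessinger-type criterion (isomorphism on tangent spaces and injectivity on obstruction spaces), I would then upgrade the first-order match to the full bijection; pro-representability, rather than mere versality, is available here because the $S_j$ are distinct simple objects, so $\text{Hom}(S_i,S_j) = \delta_{ij}\,k$ and the family has no nonscalar infinitesimal automorphisms.

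Finally, the natural isomorphism $\text{Def}_S(\Gamma) \cong \text{Hom}(\hat A,\Gamma)$ shows that $\hat A$ pro-represents $\text{Def}_S$, so $\hat A$ is the parameter algebra of the versal deformation and the universal family is the one classified by $\mathrm{id}_{\hat A}$, namely $\hat P = \hat P \otimes_{\hat A}\hat A$. Reinterpreting this through the Bondal--Rickard equivalence returns the statement for $\text{Perv}(\hat Y/\hat X)$, which completes the argument.
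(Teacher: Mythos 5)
You take a genuinely different route from the paper. The paper never writes down a classifying homomorphism: it stays inside $\text{Perv}(\hat Y/\hat X)$, writes $\hat P=\varprojlim \hat P/\mathfrak{m}^n\hat P$, observes that each quotient is an iterated extension of the $s_j$, and then uses exactly two Hom-computations --- $\dim\text{Hom}(\hat P,s_j)=1$, which forces every extension in that tower to be non-trivial, and $\text{Hom}(\hat P,s_j[1])=0$, which lets the map from $\hat P$ lift through any further non-trivial extension $G\to F^m$ --- to conclude versality from the construction in \cite{NC} of versal deformations as limits of iterated universal extensions. You instead transport everything to $(\text{mod-}\hat A)$ and classify deformations concretely: a deformation $\mathcal{M}$ of $S=\bigoplus_j S_j$ over $\Gamma\in(\text{Art}_m)$ is, by flatness and Nakayama, free of rank one as an augmented left $\Gamma$-module, so the commuting right $\hat A$-action is right multiplication along an algebra homomorphism $\hat A\to\Gamma$; this generalized-Burnside-type argument is sound, and incidentally it is the clean way to carry out your ``surjectivity'' step --- once it is phrased this way, the induction on radical length, the Schlessinger-type bootstrap, and the appeal to $\text{Ext}^2$ and Massey products are all unnecessary. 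What your route buys is an explicit description of the entire deformation functor; what the paper's route buys is that its two Hom-computations apply verbatim to deformed objects that are \emph{not} projective generators, which is exactly how Theorem~\ref{defBvdB} is proved for $Q[1]$, where your Morita-type identification is unavailable.

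One genuine error, though it does not sink the theorem: the asserted natural bijection $\text{Def}_S(\Gamma)\cong\text{Hom}(\hat A,\Gamma)$ is false, and $\hat A$ does not pro-represent the functor. An isomorphism of left $\Gamma$-modules ${}_\Gamma\Gamma\to{}_\Gamma\Gamma$ is right multiplication by a unit $u$, and the requirement of compatibility with the fiber identification only forces $u\equiv 1\bmod M_\Gamma$; hence $\phi$ and $u^{-1}\phi u$ define isomorphic deformations, while $u^{-1}\phi u\ne\phi$ whenever $\Gamma$ is non-commutative and $u$ fails to centralize $\phi(\hat A)$. Your appeal to the absence of nonscalar infinitesimal automorphisms of the $S_j$ does not address this conjugation ambiguity, which is precisely why only a hull, i.e.\ versality, can hold. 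Fortunately versality is all the theorem claims, and your construction does deliver it: the map $\text{Hom}(\hat A,\Gamma)\to\text{Def}_S(\Gamma)$ is surjective (after conjugating the classifying map so that the idempotents of $k^m$ in $\hat A$ and $\Gamma$ correspond, making it a morphism of augmented algebras); it factors through $\hat A/J^{m+1}$ for some $m$ since $M_\Gamma$ is nilpotent; and the induced map $\hat A/J^2\to\Gamma/M_\Gamma^2$ is independent of all choices, because conjugation by a unit $u\equiv 1\bmod M_\Gamma$ commuting with the idempotents acts trivially modulo $M_\Gamma^2$. With the conclusion weakened from ``pro-represents'' to ``is a versal hull,'' your argument is a correct, genuinely different proof.
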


The versal deformation of a partial collection recovers the {\em contraction algebra} of 
Donovan-Wemyss (\cite{Donovan-Wemyss1}) in the case of 
perverse coherent sheaves of Bridgeland and Van den Bergh (\cite{Bridgeland}, \cite{VdBergh}):

\begin{Thm}[=Theorem~\ref{defBvdB}]
Assume that the dimension of the fibers of $f$ are at most $1$, and that
$Rf_*\mathcal{O}_Y = \mathcal{O}_X$.
Let $C$ be the scheme theoretic closed fiber of $f$ above $x_0$, 
and let $C_i$ ($i = 1,\dots, r$) be the irreducible components of $C$.

(A) Let $\{P_i\}_{i=0}^r$ and $\{s_j\}_{j=0}^r$ be the sets of indecomposable projective objects and simple objects
in ${}^{-1}\text{Perv}(\hat Y/\hat X)$ defined in \S 3 (A).
Let $\hat P = \bigoplus_{i=0}^r P_i$, and let $Q$ be the kernel of the natural homomorphism 
$p: f^*f_*\hat P \to \hat P$.
Let $I$ is the two-sided ideal of $\hat A = f_*\mathcal{E}nd(\hat P)$ generated by endomorphisms of $\hat P$
which can be factored in the form $\hat P \to P_0 \to \hat P$. 
Then $Q[1] \in {}^{-1}\text{Perv}(\hat Y/\hat X)$ 
is the versal deformation of the simple collection $\bigoplus_{j=1}^r s_j$
with the parameter algebra $\text{End}(Q[1]) \cong \hat A/I$.

(B) Let $\{P'_i\}_{i=0}^r$ and $\{s'_j\}_{j=0}^r$ be the sets of indecomposable projective objects and simple objects
in ${}^0\text{Perv}(\hat Y/\hat X)$ defined in \S 3 (B).
Let $\hat P' = \bigoplus_{i=0}^r P'_i$, and let $Q'$ be the cokernel of the 
natural homomorphism $p': f^*f_*\hat P' \to \hat P'$.
Let $I'$ is the two-sided ideal of $\hat A' = f_*\text{End}(\hat P')$ generated by endomorphisms of $\hat P'$
which can be factored in the form $\hat P' \to P'_0 \to \hat P'$. 
Then $Q' \in {}^0\text{Perv}(\hat Y/\hat X)$
is the versal deformation of the simple collection $\bigoplus_{j=1}^r s'_j$
with the parameter algebra 
$\text{End}(Q') \cong \hat A'/I'$.
\end{Thm}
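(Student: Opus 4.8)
The plan is to prove (A) in four movements and then obtain (B) by the dual argument on the opposite perverse $t$-structure. Throughout I work with the defining short exact sequence
\[
0 \to Q \to f^*f_*\hat P \xrightarrow{\,p\,} \hat P \to 0,
\]
where surjectivity of the counit $p$ comes from the global generation built into the Bridgeland--Van den Bergh structure. The first reduction is to observe that on the distinguished summand $P_0 = \mathcal{O}_Y = f^*\mathcal{O}_X$ the counit is an isomorphism: since $Rf_*\mathcal{O}_Y = \mathcal{O}_X$ we have $f^*f_* P_0 = P_0$ and $p|_{P_0} = \mathrm{id}$, so $Q$ receives contributions only from $P_1,\dots,P_r$. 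The first task is then to check that $Q[1]$ lies in ${}^{-1}\mathrm{Perv}(\hat Y/\hat X)$ and that its composition factors are exactly $s_1,\dots,s_r$; this is a direct verification of the perverse $t$-structure inequalities on the cohomology sheaves of $Q$, using the one-dimensionality of the fibres.

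The heart of the argument is the identification $\mathrm{End}(Q[1])\cong\hat A/I$. I would produce the natural homomorphism $\Phi\colon \hat A = \mathrm{End}(\hat P)\to\mathrm{End}(Q[1])=\mathrm{End}(Q)$ by functoriality: an endomorphism $\phi$ of $\hat P$ induces $f^*f_*\phi$, and naturality of the counit, $p\circ f^*f_*\phi = \phi\circ p$, forces $f^*f_*\phi$ to preserve $Q=\ker p$. Surjectivity of $\Phi$ follows from the long exact sequences obtained by applying $\mathrm{Hom}(Q[1],-)$ and $\mathrm{Hom}(-,\hat P)$ to the triangle, together with the $\mathrm{Ext}$-vanishing among the summands of the tilting object $\hat P$ and the fact that $f^*f_*\hat P$ is pulled back from $X$. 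For the kernel, the inclusion $I\subseteq\ker\Phi$ is the clean direction: if $\phi=\beta\alpha$ factors through $P_0$ with $\alpha\colon\hat P\to P_0$, then naturality gives $p_{P_0}\circ(f^*f_*\alpha)=\alpha\circ p_{\hat P}$, whose restriction to $Q=\ker p_{\hat P}$ vanishes; since $p_{P_0}=\mathrm{id}$ we get $(f^*f_*\alpha)|_Q=0$, hence $(f^*f_*\phi)|_Q=0$.

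\emph{The main obstacle is the reverse inclusion} $\ker\Phi\subseteq I$. Here I would first show that any $\phi$ inducing zero on $Q$ factors through the counit: from $(f^*f_*\phi)|_Q=0$ and surjectivity of $p$ one obtains $f^*f_*\phi=\tilde\phi\circ p$ for some $\tilde\phi\colon\hat P\to f^*f_*\hat P$, and then $\phi\circ p = p\circ f^*f_*\phi = p\circ\tilde\phi\circ p$ yields $\phi=p\circ\tilde\phi$ because $p$ is epi. The decisive point is that $f^*f_*\hat P$, being the pullback of the finitely generated $R$-module $f_*\hat P$, admits an epimorphism from a finite sum of copies of $P_0$: a presentation $R^{\oplus n}\twoheadrightarrow f_*\hat P$ pulls back, by right-exactness of $f^*$, to $P_0^{\oplus n}=f^*\mathcal{O}_X^{\oplus n}\twoheadrightarrow f^*f_*\hat P$. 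Since $\hat P$ is projective in $\mathrm{Perv}$, the map $\tilde\phi$ lifts through this epimorphism to $\hat P\to P_0^{\oplus n}$, whence $\phi=p\circ\tilde\phi$ factors as $\hat P\to P_0^{\oplus n}\to\hat P$ and lies in $I=\hat A e_0\hat A$. The delicate part of this step is controlling the interplay of $f^*$ with the perverse $t$-structure so that the lifting takes place inside $\mathrm{Perv}$; the one-dimensionality of the fibres is what keeps the relevant higher functors from interfering.

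Finally, for versality I would invoke Theorem~\ref{recover}: $\hat P$ is the versal noncommutative deformation of the full collection $\bigoplus_{j=0}^r s_j$ with parameter algebra $\hat A$. Restricting a deformation of the full collection to the sub-collection $\{s_1,\dots,s_r\}$ defines a morphism of deformation functors, whose induced map on prorepresenting algebras is the quotient $\hat A\twoheadrightarrow\hat A/I$ obtained by annihilating the idempotent $e_0$ of the omitted object. Having already shown that $Q[1]$ is a deformation of $\bigoplus_{j=1}^r s_j$ with endomorphism algebra $\hat A/I$, I would conclude that $Q[1]$ represents the restricted functor, hence is its versal deformation, and that $\hat A/I$ is precisely the contraction algebra of Donovan--Wemyss. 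Part (B) then follows by the same argument carried out for the opposite perverse $t$-structure: one replaces the kernel $Q=\ker p$ by the cokernel $Q'=\mathrm{coker}\,p'$, the projectives $P_i$ by the ${}^0\mathrm{Perv}$-projectives $P_i'$, and the left-exactness used for $Q$ by the right-exactness appropriate to $Q'$, yielding $\mathrm{End}(Q')\cong\hat A'/I'$ together with its versality.
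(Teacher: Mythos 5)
Your skeleton matches the paper's: the sequence $0 \to Q \to f^*f_*\hat P \xrightarrow{p} \hat P \to 0$, the induced map $\Phi: \hat A \to \text{End}(Q[1])$, the easy inclusion $I \subseteq \text{Ker}(\Phi)$, and the diagram chase producing $\phi = p \circ \tilde\phi$ all appear there in the same form. But the two steps you yourself flag as delicate are genuinely gapped, and in both cases the missing content is exactly where the paper does its real work. First, the lifting of $\tilde\phi \colon \hat P \to f^*f_*\hat P$ through $P_0^{\oplus n} \to f^*f_*\hat P$ does not follow from projectivity of $\hat P$ in ${}^{-1}\text{Perv}(\hat Y/\hat X)$: projectivity gives liftings only against epimorphisms \emph{of the heart}, and a surjection of coherent sheaves between objects of the heart need not be one — its kernel $K$ must itself lie in the heart, which here would require $\text{Hom}(K,\mathcal{C}) = 0$, not at all automatic for a subsheaf of $\mathcal{O}_Y^{\oplus n}$. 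What the lifting actually needs is $\text{Ext}^1_Y(\hat P, K) = 0$, i.e. $R^1f_*(\hat P^* \otimes K) = 0$; but pushing forward the sequence $0 \to \hat P^*\otimes K \to (\hat P^*)^{\oplus n} \to \hat P^*\otimes f^*f_*\hat P \to 0$ shows this vanishing is \emph{equivalent} to surjectivity of $\text{Hom}(\hat P, P_0^{\oplus n}) \to \text{Hom}(\hat P, f^*f_*\hat P)$ — the very statement you are trying to establish, so the argument is circular as written. The paper breaks the circle with a two-step free presentation $G_1 \to G_0 \to f_*\hat P \to 0$: setting $G' = \text{Im}(f^*G_1 \to f^*G_0)$, it proves $\text{Hom}(\hat P, G'[1]) = 0$ from two inputs you never invoke, namely $\text{Hom}(\hat P, f^*G_1[1]) = 0$ (because $\hat P^*$ is a direct summand of $\mathcal{E}nd(\hat P)$, so $R^1f_*\hat P^* = 0$ by the tilting hypothesis) and $\text{Hom}(\hat P, \cdot\,[2]) = 0$ on sheaves (fiber dimension $\le 1$ over an affine base). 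The point is that $G'$, unlike your $K$, is a quotient of a sum of copies of $\mathcal{O}_Y$, and that is what makes the vanishing provable.

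Second, versality. You deduce it by asserting that ``restriction'' of deformations from the full collection to $\{s_1,\dots,s_r\}$ induces the quotient $\hat A \twoheadrightarrow \hat A/I$ on prorepresenting algebras; but restriction of a deformation to a sub-collection is not a defined operation, and the claim that $\hat A/I$ prorepresents the sub-collection's deformation functor is essentially the theorem itself, so this step begs the question. The paper argues concretely, in two verifications that are absent from your proposal: (i) $Q[1]$ is an inverse limit of iterated extensions of $s_1,\dots,s_r$ \emph{only} — excluding $s_0$ is not a consequence of your remark that the counit is an isomorphism on $P_0$; it needs the tower argument using $\text{Hom}(P_0, Q[1]) = R^1f_*Q = 0$ together with $\text{Hom}(P_0, s_j[1]) = 0$; and (ii) the versality criterion from the proof of Theorem~\ref{recover}, namely $\text{Hom}(Q[1], s_j) \cong k$ and $\text{Hom}(Q[1], s_j[1]) = 0$ for $1 \le j \le r$, which via the defining sequence reduces to $\text{Hom}(f^*f_*\hat P, s_j[t]) = 0$ for $t = -1, 0$, proven once more with the free presentation and $Rf_*\mathcal{O}_{C_j}(-1) = 0$. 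Without (i) and (ii) you have established neither versality nor even that $Q[1]$ is a deformation of $\bigoplus_{j=1}^r s_j$ rather than of a collection involving $s_0$. (Your surjectivity sketch for $\Phi$ also silently uses that $f^*f_*\hat P$ lies in the heart, which the paper checks via $R^1f_*f^*f_*\hat P = 0$ and $\text{Hom}(f^*f_*\hat P,\mathcal{C}) \cong \text{Hom}(f_*\hat P, f_*\mathcal{C}) = 0$.) The same repairs are needed, mutatis mutandis, in your treatment of (B).
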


Indeed we have $s_j = s'_j[1]$ and $Q=Q'$, though $\hat P \ne \hat P'$  
and ${}^{-1}\text{Perv}(\hat Y/\hat X) \ne {}^0\text{Perv}(\hat Y/\hat X)$.

The {\em contraction algebra} $\hat A/I \cong \hat A'/I'$ is an important invariant of $f$.
For example, Donovan and Wemyss conjectured that, in the case of a flopping contraction of a smooth $3$-fold $Y$, 
the contraction algebra determines the singularity of $X$ (see \cite{Hua-Toda}, \cite{Hua}, \cite{Hua-Zhou} 
for the development).

As a corollary, we prove that $\hat A/I$ is isomorphic to its opposite ring (Corollary~\ref{opposite}).
We also have the following:

\begin{Cor}[= Corollary~\ref{isolated}]
Assume in addition that $f$ is a birational morphism.
Then $f$ is an isomorphism outside the fiber over $x_0$ in a neighborhood of the fiber if and only if 
the parameter algebra $\hat A/I$ of the versal deformation of the simple collection $\bigoplus_{j=1}^r s_j$
is finite dimensional as a $k$-vector space.  
\end{Cor}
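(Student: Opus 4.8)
The plan is to translate the finiteness of $\hat A/I$ into a statement about the support of a coherent sheaf, and then to identify that support with the image of the locus where $f$ fails to be an isomorphism. By Theorem~\ref{defBvdB}(A) we have $\hat A/I \cong \text{End}(Q[1])$, and since $\hat A = f_*\mathcal{E}nd(\hat P)$ is a coherent sheaf of $\mathcal{O}_{\hat X}$-algebras over the complete local ring $\hat R$, the quotient $\hat A/I$ is a finite $\hat R$-module. Hence the first step is the standard commutative-algebra observation that a finite module over a Noetherian local ring has finite length --- equivalently is finite-dimensional over $k = \hat R/\mathfrak{m}$ --- if and only if its support is contained in the closed point $\{x_0\} = \{\mathfrak{m}\}$. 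So the corollary reduces to showing that $\text{Supp}_{\hat R}(\hat A/I) \subseteq \{x_0\}$ precisely when $f$ is an isomorphism over the punctured spectrum $\hat X \setminus \{x_0\}$.

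Next I would bound this support by $f(\text{Supp}\, Q)$. Since the endomorphisms making up $\hat A/I \cong \text{End}(Q[1])$ arise by pushing forward local homomorphism sheaves, which are supported on $\text{Supp}\, Q$, the $\hat R$-module $\hat A/I$ is supported inside $f(\text{Supp}\, Q)$. For the easy implication I would use that $Q$ is the kernel of the counit $p : f^*f_*\hat P \to \hat P$: wherever $f$ is an isomorphism the counit $f^*f_*(-) \to (-)$ is an isomorphism, so $p$ is an isomorphism and $Q = 0$ there. Thus $\text{Supp}\, Q$ is contained in the exceptional locus $E$, the closed set of points lying on a positive-dimensional fibre; and if $f$ is an isomorphism away from $x_0$ then $E \subseteq f^{-1}(x_0)$, which gives $\text{Supp}(\hat A/I) \subseteq \{x_0\}$ and hence finite-dimensionality.

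For the converse I would argue that $Q$, and with it $\hat A/I$, is genuinely nonzero over every point of $f(E)$, so that $f(E) = \{x_0\}$ forces $E \subseteq f^{-1}(x_0)$. The cleanest route is to exploit the compatibility of the whole construction --- the perverse t-structure, the tilting summands $P_0,\dots,P_r$, and the ideal $I$ --- with localization on $X$: for a point $x \neq x_0$ at which $f$ is not an isomorphism, restriction to a neighborhood of $x$ reproduces the same setup for the local contraction at $x$, whose fibre again contains exceptional curves and therefore supports at least one of the simple objects $s_j$. Since the simple modules of the contraction algebra are exactly these $s_j$, the localization $(\hat A/I)_x$ is nonzero, so $x \in \text{Supp}(\hat A/I)$; a finite $\hat R$-module with a point of its support other than $x_0$ cannot be finite-dimensional over $k$.

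I expect the main obstacle to be this last step, namely proving the inclusion $E \subseteq \text{Supp}\, Q$, i.e.\ that $Q$ does not vanish over the whole non-isomorphism locus. One must check that the localization of the heart ${}^{-1}\text{Perv}(\hat Y/\hat X)$ and of the summands $P_i$ really does present the corresponding local structure at $x \neq x_0$, so that a nonempty exceptional fibre forces a nonzero simple object and hence a nonzero contraction algebra; equivalently, one must rule out an accidental cancellation making $Q$ vanish over a point where $f$ is not an isomorphism. The supporting facts --- that $Rf_*Q = 0$, which follows from the projection formula together with $Rf_*\mathcal{O}_Y = \mathcal{O}_X$ since these make $Rf_*p$ the identity, and that the $s_j$ are exactly the objects annihilated by $Rf_*$ --- should pin down $\text{Supp}\, Q$ as exactly $E$ and so close the argument.
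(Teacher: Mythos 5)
Your overall skeleton agrees with the paper's: both reduce finite dimensionality of the contraction algebra over the complete local ring to the statement that its support is the closed point, and both handle the easy implication by observing that the counit $f^*f__*\to \mathrm{id}$ is an isomorphism wherever $f$ is, so $Q$ (resp.\ $Q'$) vanishes there. The problem is the converse, which you yourself flag as ``the main obstacle'' --- and the argument you sketch for it does not close. You propose to localize at a point $x\in D\setminus\{x_0\}$ and invoke the statement that ``the fibre again contains exceptional curves and therefore supports at least one of the simple objects $s_j$,'' concluding $(\hat A/I)_x\neq 0$. But the $s_j$ of Theorem~\ref{defBvdB} are the simple objects over the \emph{closed} point $x_0$; over $x$ one would be dealing with entirely different objects, and to say they are ``the simple modules of the contraction algebra'' of the localized morphism is to re-apply Theorem~\ref{defBvdB} over the base $\mathcal{O}_{X,x}$. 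The paper's framework does not permit this citation as it stands: the constructions of \S 3 and Lemma~\ref{yahiro} (finiteness and matching of simples and indecomposable projectives, the choice of the line bundles $L_i$ with $(L_i,C_j)=\delta_{ij}$) are carried out over a \emph{complete} local ring with the exceptional curves sitting in the closed fiber, and $\hat P$ localized at $x$ need not have the form prescribed there relative to the curves over $x$. Your fallback claim, that $Rf_*Q=0$ together with ``the $s_j$ are exactly the objects annihilated by $Rf_*$'' pins down $\mathrm{Supp}\,Q = E$, is also not correct: the null category $\mathcal{C}$ contains sheaves supported over every point of $D$, and these facts only yield the inclusion $\mathrm{Supp}\,Q\subseteq E$, never the reverse one, which is exactly what is at stake.

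What is missing is the concrete geometric input the paper uses. Kawamata passes to the presentation (B) via Corollary~\ref{opposite} ($A/I\cong A'/I'$, with $Q\cong Q'=\mathrm{Coker}(p')$), so that non-vanishing becomes a failure of surjectivity, and then argues: if $x\in D$ then $f^{-1}(x)$ is positive dimensional, and $P'$ has \emph{negative degree} along the contracted curves --- indeed $\det \tilde P'_i = L_i^{-1}$ by the defining sequence (\ref{def P'i}), and $\bigotimes_i L_i$ is $f$-ample because it is ample on the closed fiber and ampleness over a local base is detected there --- so $P'$ cannot be generated by relative global sections near $x$; hence $Q'\neq 0$ near $x$ and $x\in\mathrm{Supp}(A'/I')$. (The same degree obstruction would also rescue your version (A): if $p$ were an isomorphism near $x$, then $P$ would be a pullback there and $\det P$ would have degree $0$ on curves over $x$, contradicting $f$-ampleness.) Without this degree argument, or some substitute ruling out what you call an ``accidental cancellation,'' your proof of the hard direction is circular, so as written the proposal has a genuine gap.
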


The above result was proved by Donovan and Wemyss (\cite{Donovan-Wemyss1}, \cite{Donovan-Wemyss2})
in the case where $f$ is a 
flopping contraction of a $3$-fold, i.e., $X$ and $Y$ are $3$-dimensional algebraic varieties having only 
Gorenstein terminal singularities and $f$ is a crepant morphism 
in the sense that $(K_Y,C_i) = 0$ for all $i$.
Bodzenta and Bondal \cite{BB} extended their result to the case where 
$X$ is a hypersurface of multiplicity $2$ with canonical singularities and $Y$ is Gorenstein.

We assume that all schemes and morphisms in this paper are defined over a fixed algebraically 
closed base field $k$ of characteristic $0$.

The author would like to thank Kohei Yahiro for letting him know the argument in Lemma~\ref{yahiro}.
This work was partly done while the author stayed at National Taiwan University.
The author would like to thank Professor Jungkai Chen and 
National Center for Theoretical Sciences of Taiwan of the hospitality and 
excellent working condition.

This work is partly supported by Grant-in-Aid for Scientific Research (A) 16H02141.

\section{Perverse coherent sheaves}

\begin{Defn}\label{tilting}
Let $f: Y \to X$ be a projective morphism between noetherian schemes such that $X = \text{Spec}(R)$ 
is an affine scheme.
A locally free coherent sheaf $P$ on $Y$ is said to be a {\em tilting generator} for $f$ 
if the following conditions are satisfied:

\begin{enumerate}
\item $R^pf_*\mathcal{E}nd(P) = 0$ for $p > 0$.

\item For $a \in D^b(\text{coh}(Y))$, if $\text{Hom}(P,a[p]) \cong 0$ for all $p$, then $a \cong 0$. 
\end{enumerate}
\end{Defn}

\begin{Thm}[Bondal-Rickard (\cite{Bondal}, \cite{Rickard})]\label{BR}
Let $f: Y \to X = \text{Spec}(R)$ be a projective morphism between noetherian schemes,  
let $P$ be a tilting generator for $f$, and 
let $A = f_*\mathcal{E}nd(P)$ be the endomorphism algebra.
Then $A$ is an associative $\mathcal{O}_X$-algebra which is coherent as an $\mathcal{O}_X$-module, and 
there is an equivalence of triangulated categories
\[
\Phi: D^b(\text{coh}(Y)) \to D^b(\text{mod-}A)
\]
given by an exact functor $\Phi(\bullet) = R\text{Hom}(P, \bullet)$.
The quasi-inverse functor $\Psi: D^b(\text{mod-}A) \to D^b(\text{coh}(Y))$ is given by
$\Psi(\bullet) = \bullet \otimes^L_A P$.
\end{Thm}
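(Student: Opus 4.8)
The plan is to recognize that conditions (1) and (2) place $P$ exactly in the setting of Rickard's derived Morita theory, and then to run the standard generation-and-d\'evissage argument. First I would record that $A$ is a coherent $\mathcal{O}_X$-algebra: since $f$ is projective and $Y$, $X$ are noetherian, Grothendieck's coherence theorem makes each $R^pf_*\mathcal{E}nd(P)$ coherent, and the associative algebra structure on $A=f_*\mathcal{E}nd(P)$ is induced by composition of endomorphisms.

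The crucial preliminary step is a cohomological computation showing $P$ is a tilting object. Because $P$ is locally free, $\mathcal{E}xt^p(P,P)=0$ for $p>0$, so the local-to-global spectral sequence degenerates and $R\mathcal{H}om(P,P)\cong\mathcal{E}nd(P)$. Pushing forward and invoking condition (1), we get $Rf_*\mathcal{E}nd(P)=f_*\mathcal{E}nd(P)=A$ concentrated in degree $0$; since $X$ is affine and $A$ is coherent, taking global sections yields
\[
\text{Hom}_{D^b(\text{coh}(Y))}(P,P[p]) \cong
\begin{cases}
\Gamma(X,A) & p = 0, \\
0 & p \neq 0.
\end{cases}
\]
Thus $P$ is a tilting object whose graded endomorphism ring is concentrated in degree $0$ and equals $A$, while condition (2) says precisely that $P$ generates $D^b(\text{coh}(Y))$.

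Next I would set up the adjoint pair. The functor $\Phi=Rf_*R\mathcal{H}om(P,-)$ is exact and, via precomposition by $\mathcal{E}nd(P)$, naturally takes values in right $A$-modules, while its left adjoint is $\Psi=(-)\otimes^L_A P$. The counit $\Psi\Phi\to\text{id}$ and unit $\text{id}\to\Phi\Psi$ are isomorphisms on the respective generators, since $\Phi(P)\cong A$ by the displayed computation and $\Psi(A)=A\otimes^L_A P\cong P$. The heart of the argument is then d\'evissage: the full subcategory of $D^b(\text{coh}(Y))$ on which the counit is an isomorphism is a thick triangulated subcategory containing all shifts of $P$, hence by condition (2) it is everything, so $\Phi$ is fully faithful. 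Symmetrically, the essential image of $\Phi$ is a thick triangulated subcategory of $D^b(\text{mod-}A)$ containing $A$, hence is all of $D^b(\text{mod-}A)$, giving essential surjectivity.

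The main obstacle is controlling boundedness and coherence throughout: a priori $\Psi(M)=M\otimes^L_A P$ need not land in $D^b(\text{coh}(Y))$ unless $P$ has finite Tor-dimension over $A$. I would circumvent this by first establishing the equivalence at the level of the unbounded (quasi-coherent) derived categories via Keller's theorem, where $P$ is a compact generator with graded endomorphism ring $A$, and only afterwards checking that the equivalence restricts to bounded coherent objects on each side. Here $\Phi$ sends coherent sheaves to coherent $A$-modules by the coherence of $A$ together with the properness of $f$, and the conservativity supplied by condition (2) pins down the essential image, so that the restricted functors $\Phi$ and $\Psi$ are mutually quasi-inverse equivalences of the bounded categories as asserted.
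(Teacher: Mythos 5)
First, a caveat on the comparison itself: the paper does not prove Theorem~\ref{BR} at all --- it is imported as a black box with citations to Bondal and Rickard (in this generality the statement is really the tilting theorem as in Van den Bergh's flops paper and Hille--Van den Bergh). So your proposal can only be measured against the standard literature proof, and against that benchmark your preliminaries are fine (coherence of $A$ via properness, the computation $\text{Hom}(P,P[p]) \cong 0$ for $p \neq 0$ with $\text{End}(P) \cong \Gamma(X,A)$, the adjunction $\Psi \dashv \Phi$), but the argument breaks at the two points that carry the real content.

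The d\'evissage paragraph is not a fixable sketch but a structurally wrong argument. Condition (2) says that the right orthogonal of $P$ in $D^b(\text{coh}(Y))$ vanishes (weak generation); it does not say that the smallest thick subcategory containing $P$ is all of $D^b(\text{coh}(Y))$ (classical generation), which is what your d\'evissage needs. These notions genuinely differ here: since $P$ is locally free, the thick subcategory it generates inside $D^b(\text{coh}(Y))$ is contained in the perfect complexes, a proper subcategory whenever $Y$ is singular; symmetrically, the thick subcategory of $D^b(\text{mod-}A)$ generated by $A$ consists of the perfect $A$-complexes, a proper subcategory whenever $A$ has infinite global dimension. The theorem is true in exactly that generality (the paper applies it to singular situations), so no argument of the form ``a thick subcategory containing the generator is everything'' can establish either full faithfulness or essential surjectivity this way.

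Your closing Keller--Neeman strategy is the correct one, but the two steps you pass over are the two hard steps, so the gap remains. (i) To invoke compact generation you need $P^\perp = 0$ in the unbounded category $D(\text{Qcoh}(Y))$, whereas condition (2) only gives $P^\perp \cap D^b(\text{coh}(Y)) = 0$; bridging these (equivalently, by Neeman--Thomason, showing that $P$ classically generates $\text{Perf}(Y)$) is not formal and is nowhere supplied. (ii) Granting the big equivalence, the inclusion $\Phi(D^b(\text{coh}(Y))) \subseteq D^b(\text{mod-}A)$ does follow from properness and coherence as you say, but essential surjectivity --- equivalently, that $\Psi(M) = M \otimes^L_A P$ lies in $D^b(\text{coh}(Y))$ for every $M \in D^b(\text{mod-}A)$ --- is precisely the boundedness/finite-Tor-dimension problem you yourself flagged, and ``the conservativity supplied by condition (2) pins down the essential image'' is not an argument: conservativity of $\Phi$ says nothing about which objects of $D(\text{Qcoh}(Y))$ are bounded with coherent cohomology. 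The known proofs resolve this by characterizing the bounded coherent subcategories intrinsically inside the unbounded categories (roughly: $a$ is bounded coherent if and only if $\bigoplus_i \text{Hom}(c,a[i])$ is a finitely generated $R$-module for compact $c$, using properness of $f$ on one side and module-finiteness of $A$ over $R$ on the other) and then checking that the $R$-linear equivalence preserves this characterization; some such mechanism has to be added before your outline becomes a proof.
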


The theorem connects (algebraic) geometry and (non-commutative) algebra.

We mainly consider the case where $X$ is a spectrum of a noetherian complete local ring.
This is justified by the following proposition:

\begin{Prop}
Let $f: Y \to X = \text{Spec}(R)$ be a projective morphism between noetherian schemes,  
let $x \in X$ be a closed point, let $\hat X$ be the completion of $X$ at $x$, let $\hat Y = Y \times_X \hat X$ 
be the fiber product, and let $\hat f: \hat Y \to \hat X$ be the induced morphism.

(1) Let $P$ be a tilting generator for $f$, and 
let $A = f_*\mathcal{E}nd(P)$ be the endomorphism algebra.
Let $\hat P$ be the locally free sheaf on $\hat Y$
induced from $P$, and let $\hat A = \hat f_*\mathcal{E}nd(\hat P)$ be the endomorphism algebra.
Then $\hat P$ is a tilting generator for $\hat f$, and $\hat A = \mathcal{O}_{\hat X} \otimes_{\mathcal{O}_X} A$.

(2) Let $\bigoplus_{j=1}^r s_j$ be a simple collection on $Y$ whose support is contained in a fiber $f^{-1}(x)$.
Then its non-commutative deformations on $Y$ are the same as those on $\hat Y$. 
\end{Prop}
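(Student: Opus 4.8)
The plan is to handle the two parts by different mechanisms: part (1) is essentially flat base change, while part (2) rests on the infinitesimal (formal) nature of non-commutative deformations. Write $h\colon \hat X \to X$ for the completion morphism and $g\colon \hat Y \to Y$ for its base change, so that $\hat P = g^*P$ and $f\circ g = h\circ \hat f$. Since $R \to \hat R$ is flat, $h$ and hence $g$ are flat, and because $\hat X \to X$ is a morphism of affine schemes, $g$ is moreover affine. First I would record that $\mathcal{E}nd(\hat P) = g^*\mathcal{E}nd(P)$, as $P$ is locally free, and apply flat base change along $h$ to the proper morphism $f$: this yields $R^p\hat f_*\mathcal{E}nd(\hat P) \cong h^* R^p f_*\mathcal{E}nd(P)$, which vanishes for $p>0$ and for $p=0$ gives $\hat A = h^*A = \mathcal{O}_{\hat X}\otimes_{\mathcal{O}_X}A$. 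This settles condition (1) of Definition~\ref{tilting} and the displayed identity for $\hat A$ at once.

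The genuine work is condition (2), generation, which does not visibly survive completion. Here I would exploit that $g$ is affine, so that $Rg_* = g_*$ is exact and, being locally restriction of scalars along the flat ring maps cutting out $g$, is conservative on quasi-coherent sheaves. Given $a \in D^b(\text{coh}(\hat Y))$ with $\text{Hom}(\hat P, a[p]) = 0$ for all $p$, the derived adjunction $g^* \dashv Rg_*$ gives
\[
R\text{Hom}_{\hat Y}(g^*P, a) \cong R\text{Hom}_Y(P, Rg_* a) = 0 .
\]
Now the equivalence of Theorem~\ref{BR} extends to the unbounded quasi-coherent derived categories, so $P$ generates $D(\text{Qcoh}(Y))$ and $R\text{Hom}_Y(P,-)$ detects the zero object there; applying this to $Rg_* a$ forces $Rg_* a = 0$, whence $a = 0$ by conservativity of $g_*$. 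The one point to verify carefully is this passage to the quasi-coherent level, since $Rg_* a$ need not be coherent: it is a standard consequence of $P$ being tilting, but it should be stated explicitly rather than quoted from the $D^b(\text{coh})$ formulation.

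For part (2), the guiding principle is that deformations over Artinian parameter algebras are infinitesimal and hence see only a formal neighborhood of the support. I would first observe that if $\tilde s$ is a deformation of $\bigoplus_j s_j$ over an Artinian algebra $\Gamma$ with $\Gamma/\text{rad}(\Gamma) = k^{\oplus r}$, then flatness over $\Gamma$ together with nilpotence of $\text{rad}(\Gamma)$ exhibits $\tilde s$ as an iterated extension of copies of the $s_j$; so its cohomology sheaves, and hence its support, lie in $f^{-1}(x)$, and $\tilde s$ is annihilated by a power of the ideal of $f^{-1}(x)$. Thus every deformation lives in the subcategory of objects supported on the closed fiber. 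The second step is that this subcategory, together with the $\text{Hom}$ and $\text{Ext}$ groups and the composition ($A_\infty$) structure governing the deformation functor, depends only on the formal completion along $f^{-1}(x)$: a coherent object supported on the fiber equals its own completion. Finally, $g$ induces an isomorphism of formal completions $\hat Y_{/\hat f^{-1}(x)} \cong Y_{/f^{-1}(x)}$, since $\hat f^{-1}(x) = f^{-1}(x)$ and $g$ is an isomorphism on every infinitesimal neighborhood of the fiber; by part (1) the identity $\hat A = \mathcal{O}_{\hat X}\otimes_{\mathcal{O}_X}A$ shows this isomorphism matches the two perverse t-structures and carries $s_j$ to its counterpart on $\hat Y$. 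Hence $g^*$ identifies the two deformation functors and their versal objects.

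The main obstacle is the generation step of part (1): unlike the cohomological vanishing it is not a formal base-change statement, and the cleanest route—the affine-pushforward adjunction above—forces one to leave $D^b(\text{coh})$ for the quasi-coherent derived category and to invoke the unbounded form of the Bondal–Rickard equivalence. In part (2) the only delicate point is the \emph{locality} of the deformation functor, namely that it is determined by the formal neighborhood of the support; but this is precisely the infinitesimal nature of deformation theory and causes no real difficulty once every deformation has been confined to $f^{-1}(x)$.
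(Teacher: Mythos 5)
The paper states this Proposition without proof --- it appears only as the justification for passing to a complete local base --- so there is no argument of the author's to compare yours against; it has to be judged on its own terms. On those terms it is essentially correct, and both halves identify the right mechanisms: flat base change along $R \to \hat R$ for the vanishing and for $\hat A = \mathcal{O}_{\hat X} \otimes_{\mathcal{O}_X} A$, and the infinitesimal nature of NC deformations for part (2).

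The one substantive point is the generation step, and you are right to flag it, because it is not merely a matter of "stating explicitly": Definition~\ref{tilting}(2) gives vanishing of the right orthogonal of $P$ only inside $D^b(\text{coh}(Y))$, while your adjunction applies $R\text{Hom}_Y(P,-)$ to $g_*a$, which (since $g$ is affine but not proper) is a bounded complex with merely quasi-coherent cohomology. The implication from $D^b(\text{coh})$-generation to $D(\text{Qcoh})$-generation is \emph{not} formal; the usual spectral-sequence or filtered-colimit arguments do not close it. What saves you is that this stronger form is already implicitly required by the paper: the standard proofs of Theorem~\ref{BR} (Bondal, Rickard, Van den Bergh, via compact generation in the sense of Neeman) establish the equivalence at the level $D(\text{Qcoh}(Y)) \cong D(\text{Mod-}A)$ and deduce the bounded coherent statement by restriction, and conservativity of $R\text{Hom}_Y(P,-)$ on $D(\text{Qcoh}(Y))$ is exactly what your argument needs. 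So your proof is complete modulo a fact the paper itself must assume to quote Theorem~\ref{BR}; you should cite it in that form rather than as a routine consequence of the definition. In part (2), the appeal to an $A_\infty$-structure can be dropped and replaced by one concrete computation that makes "deformations only see the formal neighborhood" precise: for $a,b$ supported on $f^{-1}(x)$, the complex $Rf_*R\mathcal{H}om(a,b)$ is coherent and supported at $x$, so each $\text{Ext}^i_Y(a,b)$ is a finite $R$-module killed by a power of $\mathfrak{m}$, and hence the flat base change map $\text{Ext}^i_Y(a,b) \to \text{Ext}^i_Y(a,b)\otimes_R \hat R \cong \text{Ext}^i_{\hat Y}(g^*a,g^*b)$ is an isomorphism. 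This identifies Hom's, Ext's and extension classes on the two sides, hence the iterated-extension problems that constitute NC deformations in the sense of \S 5, and it also shows (testing against $P$ and $\hat P$) that $g^*$ matches the two perverse hearts; with that inserted, part (2) is rigorous.
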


We define the abelian category of perverse coherent sheaves:

\begin{Defn}
Let $f: Y \to X = \text{Spec}(R)$ be a projective morphism between noetherian schemes, and
let $P$ be a tilting generator for $f$.
The category of {\em perverse coherent sheaves} $\text{Perv}(Y/X)$ for $f$ is the abelian category 
corresponding to the category of finitely generated right $A$-modules $(\text{mod-}A)$
by $\Psi$.
In other words,
\[
\text{Perv}(Y/X) = \{a \in D^b(\text{coh}(Y)) \mid \text{Hom}(P,a[p]) \cong 0 \text{ for } p \ne 0\}.
\]
\end{Defn}

Thus the standard $t$-structure on $D^b(\text{mod-}A)$ is transferred to a $t$-structure on $D^b(\text{coh}(Y))$
whose heart is the category of perverse coherent sheaves. 

We would like to call an object in $\text{Perv}(Y/X)$ a {\em perverse coherent sheaf}.
It is not a sheaf nor perverse.
This is a generalization of such a category defined in the seminal paper by Bridgeland 
(\cite{Bridgeland}) and extended by Van den Bergh (\cite{VdBergh}).
We note that the category $\text{Perv}(Y/X)$ depends on the choice of the tilting generator $P$.
Indeed we obtain ${}^p\text{Perv}(Y/X)$ with different perversities $p \in \mathbf{Z}$ in their papers 
for different choices of $P$.
For example, if $L$ is an invertible sheaf on $Y$, then the auto-equivalence of $D^b(\text{coh}(Y))$ defined
by $\bullet \mapsto \bullet \otimes L$ sends the tilting generator $P$ and the category of perverse coherent sheaves
$\text{Perv}(Y/X)$ to different ones.  

By definition, we have the following:

\begin{Prop}
Let $f: Y \to X = \text{Spec}(R)$ be a projective morphism between noetherian schemes, 
let $P$ be a tilting generator for $f$, and let $\text{Perv}(Y/X)$ be the corresponding category of 
perverse coherent sheaves.
Then $P \in \text{Perv}(Y/X)$, and $P$ become its {\em projective generator}, i.e., 
$P$ is a projective object in this abelian category and $\text{Hom}(P,a) \cong 0$ for $a \in \text{Perv}(Y/X)$ 
implies that $a \cong 0$.
\end{Prop}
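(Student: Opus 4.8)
The plan is to transport the entire statement across the Bondal--Rickard equivalence $\Phi$ of Theorem~\ref{BR}, where it becomes the tautology that the free module $A$ is a projective generator of $(\text{mod-}A)$. By construction $\text{Perv}(Y/X)$ is the heart of the $t$-structure obtained by pulling back the standard $t$-structure on $D^b(\text{mod-}A)$ along $\Phi$, so $\Phi$ is $t$-exact for these $t$-structures and restricts to an exact equivalence of abelian categories $\Phi: \text{Perv}(Y/X) \xrightarrow{\ \sim\ } (\text{mod-}A)$. The first task is therefore to identify the object $\Phi(P)$.

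First I would compute $\Phi(P) = R\text{Hom}(P,P)$. Since $X = \text{Spec}(R)$ is affine, $\text{Hom}(P,P[p]) = \text{Ext}^p_Y(P,P) = H^p(X, Rf_*\mathcal{E}nd(P))$; condition (1) of Definition~\ref{tilting} gives $Rf_*\mathcal{E}nd(P) = f_*\mathcal{E}nd(P) = A$ concentrated in degree $0$, and affineness of $X$ kills the higher cohomology of the coherent sheaf $A$. Hence $\text{Hom}(P,P[p]) = 0$ for $p > 0$, while the vanishing for $p < 0$ is automatic since $P$ is a single sheaf placed in degree $0$; moreover $\text{Hom}(P,P) = \Gamma(X, A) = A$. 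In particular $\text{Hom}(P,P[p]) = 0$ for all $p \ne 0$, which is exactly the membership criterion for the heart, so $P \in \text{Perv}(Y/X)$, and under $\Phi$ it corresponds to the free rank-one right module $A$ (the $A$-action on $\text{End}(P)$ being precomposition).

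With $\Phi(P) \cong A$ in hand, the remaining two assertions follow formally. For any $a \in \text{Perv}(Y/X)$ the equivalence yields $\text{Hom}_{\text{Perv}}(P,a) = \text{Hom}_{D^b(\text{coh}(Y))}(P,a) = \text{Hom}_{(\text{mod-}A)}(A, \Phi(a)) \cong \Phi(a)$, and the functor $\text{Hom}_A(A,-) \cong \mathrm{id}$ is exact on $(\text{mod-}A)$; transporting this exactness back through $\Phi$ shows that $\text{Hom}(P,-)$ is exact on $\text{Perv}(Y/X)$, i.e.\ $P$ is projective. For the generating property one can use the same identification—$\text{Hom}(P,a) \cong \Phi(a) = 0$ forces $a \cong 0$ because $\Phi$ is an equivalence—or argue directly from condition (2) of Definition~\ref{tilting}: for $a \in \text{Perv}(Y/X)$ the groups $\text{Hom}(P,a[p])$ vanish for all $p \ne 0$ by definition of the heart, so $\text{Hom}(P,a) = 0$ already gives $\text{Hom}(P,a[p]) = 0$ for every $p$, whence $a \cong 0$.

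There is no serious obstacle here; the content is concentrated in the cohomological computation of $\Phi(P)$, where one must correctly feed condition (1) and the affineness of $X$ into the Leray spectral sequence to conclude that $R\text{Hom}(P,P)$ is concentrated in degree $0$ and equal to $A$. Once that vanishing is secured, the projectivity and generation of $P$ are formal consequences of the fact that an exact equivalence of abelian categories carries the tautological projective generator $A \in (\text{mod-}A)$ to a projective generator.
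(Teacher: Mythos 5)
Your proposal is correct and matches the paper's intent: the paper states this proposition without a written proof (``By definition, we have the following''), the implicit argument being exactly your transport across the Bondal--Rickard equivalence $\Phi$, under which $P$ corresponds to the free module $A$, the tautological projective generator of $(\text{mod-}A)$. Your cohomological verification that $R\text{Hom}(P,P) \cong A$ sits in degree $0$ (via condition (1) of Definition~\ref{tilting}, affineness of $X$, and the Leray spectral sequence) correctly fills in the detail the paper leaves to the reader.
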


The category of perverse coherent sheaves $\text{Perv}(Y/X)$ 
has a different nature from the category of coherent sheaves $(\text{coh}(Y))$
in the sense that there are only finitely many points, or 
simple objects, above a point of a base space $X$. 

The author learned the following lemma from Dr. Kohei Yahiro:

\begin{Lem}\label{yahiro}
Let $X = \text{Spec }R$ be the spectrum of a noetherian complete local ring $R$ 
whose residue field is isomorphic to the base field, 
and let $A$ be an associative $R$-algebra which is finitely generated as an $R$-module.
Then the numbers of mutually non-isomorphic simple objects and mutually 
non-isomorphic indecomposable projective objects 
are finite in the abelian category $(\text{mod-}A)$.
Moreover their numbers, say $m$, are equal.
Let $s_1,\dots, s_m$ (resp. $P_1, \dots, P_m$) be such simple objects 
(resp. indecomposable projective objects).
Then $\dim \text{Hom}(P_i,s_j) = \delta_{ij}$ after possible permutations of indexes.
\end{Lem}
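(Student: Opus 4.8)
The plan is to reduce everything to the structure theory of the finite-dimensional $k$-algebra $\bar A = A/\mathfrak m A$, where $\mathfrak m$ is the maximal ideal of $R$, and to transfer the conclusions back to $A$ using the completeness of $R$. First I would observe that $\mathfrak m A \subseteq \mathrm{rad}(A)$. Indeed, any simple right $A$-module $S$ is cyclic, hence finitely generated over $R$; since the structure map $R \to A$ is central, the subspace $S\mathfrak m = \mathfrak m S$ is an $A$-submodule, so it is either $0$ or $S$, and Nakayama's lemma (valid because $\mathfrak m$ is the Jacobson radical of the local ring $R$) rules out $S\mathfrak m = S$. Thus every simple $A$-module is annihilated by $\mathfrak m A$ and is therefore a simple $\bar A$-module; conversely every simple $\bar A$-module is simple over $A$. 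This identifies the simple objects of $(\text{mod-}A)$ with those of $(\text{mod-}\bar A)$.

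Next, since $A$ is finitely generated as an $R$-module, $\bar A$ is a finite-dimensional $k$-algebra, hence Artinian. By Wedderburn--Artin we have $\bar A/\mathrm{rad}(\bar A) \cong \prod_{i=1}^m M_{n_i}(k)$, the residual division rings being $k$ because $k$ is algebraically closed; so there are exactly $m$ isomorphism classes of simple modules $s_1,\dots,s_m$, and each satisfies $\mathrm{End}_A(s_i) = \mathrm{End}_{\bar A}(s_i) = k$. This gives finiteness of the simples together with Schur's lemma in the sharp form $\dim_k \mathrm{Hom}(s_i, s_j) = \delta_{ij}$.

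The key structural input comes next: I claim $A$ is a semiperfect ring. This is where completeness of $R$ is essential --- because $R$ is complete and $A$ is module-finite over $R$, the algebra $A$ is $\mathfrak m$-adically complete, so idempotents lift along the surjection $A \to A/\mathfrak m A = \bar A$ (the ideal $\mathfrak m A$ lying in $\mathrm{rad}(A)$ with $A$ complete for the $\mathfrak m$-adic topology). I would lift a complete set of primitive orthogonal idempotents of $\bar A$ to one in $A$, obtaining a decomposition $A = \bigoplus_j f_j A$ into indecomposable projective right $A$-modules; grouping these into isomorphism classes yields exactly $m$ representatives $P_1,\dots,P_m$. Standard semiperfect theory then shows that every indecomposable projective is isomorphic to one of the $P_i$, that the $P_i$ are pairwise non-isomorphic, and that each $P_i$ is the projective cover of exactly one simple module, with $P_i/P_i\,\mathrm{rad}(A) \cong s_i$ after reindexing; in particular there are exactly $m$ indecomposable projectives, matching the count of simples. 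I expect verifying this semiperfectness (equivalently, the idempotent lifting) to be the main obstacle, since it is the only place where completeness and module-finiteness are genuinely used.

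Finally, with the indexing chosen so that $s_i$ is the top of $P_i$, the equality $\dim_k\mathrm{Hom}(P_i, s_j) = \delta_{ij}$ follows formally: any nonzero $A$-homomorphism $\varphi: P_i \to s_j$ satisfies $\varphi(P_i\,\mathrm{rad}(A)) = \varphi(P_i)\,\mathrm{rad}(A) \subseteq s_j\,\mathrm{rad}(A) = 0$, so $\varphi$ factors through the top $P_i/P_i\,\mathrm{rad}(A) \cong s_i$. Hence $\mathrm{Hom}(P_i, s_j) \cong \mathrm{Hom}(s_i, s_j)$, which equals $k$ when $i = j$ and vanishes otherwise by the Schur computation of the second paragraph, completing the argument.
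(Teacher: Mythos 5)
Your proof is correct, and it follows the same overall strategy as the paper --- reduce to a finite-dimensional quotient algebra and use completeness to lift projectives back to $A$ --- but the two arguments differ in the quotient taken and in the key lifting lemma. The paper passes directly to $\bar A = A/J$ with $J = \mathrm{rad}(A)$, observes that this is a \emph{semisimple} Artin algebra (so that simples and indecomposable projectives over $\bar A$ coincide tautologically), and then cites Bass (Algebraic K-theory, III-2.12): since $A$ is $J$-adically complete, $P \mapsto P \otimes_A \bar A$ is a bijection between finitely generated projective $A$-modules and $\bar A$-modules, which finishes everything at once. You instead pass to $A/\mathfrak{m}A$, prove $\mathfrak{m}A \subseteq \mathrm{rad}(A)$ by Nakayama, lift primitive orthogonal idempotents using $\mathfrak{m}$-adic completeness to conclude that $A$ is semiperfect, and then invoke the projective-cover correspondence between principal indecomposables and simples. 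What your route buys: it is textbook semiperfect theory rather than a pointed citation, and it makes explicit two steps the paper glosses over --- the Nakayama argument showing $\mathfrak{m}A \subseteq J$ (which is exactly what is needed for the paper's unexplained assertion that $A/J$ is Artinian), and the use of algebraic closedness of $k$ via Schur's lemma to get $\mathrm{End}(s_i) = k$ rather than merely a division algebra, which is what upgrades $\mathrm{Hom}(P_i,s_j) \cong \mathrm{End}(s_i)^{\delta_{ij}}$ to the dimension count $\dim \mathrm{Hom}(P_i,s_j) = \delta_{ij}$. What the paper's route buys: brevity, since semisimplicity of $A/J$ makes the matching of simples with projectives immediate, with all the idempotent-lifting work outsourced to Bass's theorem.
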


\begin{proof}
Let $J$ be the Jacobson radical of $A$, i.e., the intersection of all maximal right ideals.
Since $J$ is a fully invariant right submodule of $A$, it is invariant under automorphisms by left multiplications, 
hence a two-sided ideal.
Then $\bar A = A/J$ becomes an associative Artin algebra.
$\bar A$ is semi-simple, and is isomorphic as an $\bar A$-module to a direct sum of finitely many simple modules.
It follows that simple $\bar A$-modules and indecomposable projective $\bar A$-modules are the same, 
hence the assertion of the lemma for $\bar A$.

A simple $A$-module is the same as a simple $\bar A$-module with the natural $A$-module structure.
On the other hand, since $A$ is $J$-adically complete, the map  
from the set of finitely generated projective $A$-modules to the set of finitely generated projective 
$\bar A$-modules given by $P \mapsto P \otimes_A \bar A$ is bijective (\cite{Bass}~III-2.12).
Therefore the lemma is proved.     
\end{proof}

If $X = \text{Spec}(R)$ is the spectrum of a complete local ring, 
then the original tilting generator $P$ is a direct sum of the indecomposable projective objects $P_i$.
The reduced sum $\bar P = \bigoplus_{i=1}^m P_i$ is also a tilting generator which gives the same 
category of perverse coherent sheaves.
We call $\bar P$ the {\em reduced tilting generator}.

\section{The case of Bridgeland and Van den Bergh}

We recall the definition of perverse coherent sheaves by Bridgeland 
(\cite{Bridgeland}) and Van den Bergh (\cite{VdBergh}).

Let $f: Y \to X$ be a projective morphism between noetherian $k$-schemes.
Assume the following conditions:

\begin{enumerate}
\item $X = \text{Spec}(R)$ for a complete local ring $R$ whose residue field is isomorphic to $k$.

\item The dimension of the closed fiber of $f$ is equal to $1$.

\item $Rf_*\mathcal{O}_Y = \mathcal{O}_X$.

\end{enumerate}

Let $C$ be the scheme theoretic closed fiber of $f$, 
and let $C_i$ ($i = 1,\dots, r$) be the irreducible components of $C$.
By the assumption that $R^1f_*\mathcal{O}_Y = 0$, we have $C_i \cong \mathbf{P}^1$ for all $i$.
Let 
\[
\bar{\mathcal{C}} = \{c \in D^b(\text{coh}(Y)) \mid Rf_*c = 0\}
\]
and $\mathcal{C} = \bar{\mathcal{C}} \cap (\text{coh}(Y))$.
By the spectral sequence 
\[
E_2^{p,q} = R^pf_*H^q(c) \Rightarrow R^{p+q}f_*c
\]
we deduce that, for $c \in D^b(\text{coh}(Y))$, we have $c \in \bar{\mathcal{C}}$ if and only if 
$H^p(c) \in \mathcal{C}$ for all $p$.

\begin{Rem}
(1) $\mathcal{C}$ is an abelian category.
Let $h: c_0 \to c_1$ be a morphism in $\mathcal{C}$, i.e, a homomorphism of coherent sheaves such that 
$R^if_*c_j = 0$ for $i,j=0,1$. 
Let $c'_0 = \text{Ker}(h)$ and $c'_1 = \text{Coker}(h)$ in the category of coherent sheaves.
Then we claim that $c'_j \in \mathcal{C}$ for $j=0,1$.

Indeed $c'_0 \subset c_0$ implies $f_*c'_0 = 0$.
Since $f_*\text{Im}(h) = 0$ and $R^1f_*c_0 = 0$, we have $R^1f_*c'_0 = 0$.
$c'_1$ is treated similarly.

(2) But $\bar{\mathcal{C}} \not\cong D^b(\mathcal{C})$.
For example, assume that $f: Y \to X$ is a contraction of a smooth rational curve $C$ in a smooth $3$-fold 
whose normal bundle is 
isomorphic to $\mathcal{O}_{\mathbf{P}^1}(-1) \oplus \mathcal{O}_{\mathbf{P}^1}(-1)$.
Then $C$ is rigid, and $\mathcal{C}$ is equivalent to the category of $k$-vector spaces 
generated by $\mathcal{O}_C(-1)$.
On the other hand, $\text{Hom}^3(\mathcal{O}_C(-1), \mathcal{O}_C(-1)) \cong k$ in $D^b(\text{Coh}(Y))$.
\end{Rem}

We define the following categories of perverse coherent sheaves.

\vskip 1pc

(A) Let $P_0 = \mathcal{O}_Y$, 
and let $L_i$ for $i = 1,\dots, r$ be line bundles on $Y$ such that $(L_i,C_j) = \delta_{ij}$. 
We define locally free sheaves $\tilde P_i$ on $Y$ by exact sequences:
\begin{equation}\label{def Pi}
0 \to \mathcal{O}_Y^{\oplus r_i} \to \tilde P_i \to L_i \to 0
\end{equation}
such that the induces homomorphisms
\[
\text{Hom}(\mathcal{O}_Y^{\oplus r_i}, \mathcal{O}_Y) \to \text{Ext}^1(L_i,\mathcal{O}_Y) 
\]
are surjective. 
Then $\tilde P = \bigoplus_{i=0}^r \tilde P_i$ is a tilting generator, 
and the corresponding category of perverse coherent sheaves is denoted by ${}^{-1}\text{Perv}(Y/X)$.

The number $-1$ is the {\em perversity};
we have $\mathcal{C}[1] \subset {}^{-1}\text{Perv}(Y/X)$.
More precisely, we have
\begin{equation}\label{Perv-1}
\begin{split}
{}^{-1}\text{Perv}&(Y/X) \\ 
= \{&E \in D^b(\text{coh}(Y)) \mid 
f_*H^{-1}(E) = 0, R^1f_*H^0(E) = 0, \text{Hom}(H^0(E),\mathcal{C}) = 0 \\
&H^p(E) = 0 \text{ for } p \ne 0,-1\}. 
\end{split}
\end{equation}

Let $\tilde P_i \cong P_i \oplus P_0^{r'_i}$ be a decomposition into indecomposable sheaves.
Let $s_0 = \mathcal{O}_C$ and $s_j = \mathcal{O}_{C_j}(-1)[1]$ for $j > 0$.
Then $\{P_i\}_{i=0}^r$ and $\{s_j\}_{j=0}^r$ are the sets of indecomposable projective objects and 
simple objects in ${}^{-1}\text{Perv}(Y/X)$.

\vskip 1pc

(B) Let $P'_0 = \mathcal{O}_Y$, and define locally free sheaves $\tilde P'_i$ by exact sequences
\begin{equation}\label{def P'i}
0 \to \tilde P'_i \to \mathcal{O}_Y^{\oplus r_i} \to L_i \to 0
\end{equation} 
such that the induces homomorphisms
\[
\text{Hom}(\mathcal{O}_Y, \mathcal{O}_Y^{\oplus r_i}) \to \text{Hom}(\mathcal{O}_Y,L_i) 
\]
are surjective. 
Then $\tilde P' = \bigoplus_{i=0}^r \tilde P'_i$ is a tilting generator, 
and the corresponding category of perverse coherent sheaves is denoted by ${}^0\text{Perv}(Y/X)$.

The number $0$ is the perversity;
we have $\mathcal{C} \subset {}^0\text{Perv}(Y/X)$.
More precisely, we have
\begin{equation}\label{Perv0}
\begin{split}
{}^0\text{Perv}&(Y/X) \\ 
= \{&E \in D^b(\text{coh}(Y)) \mid 
f_*H^{-1}(E) = 0, \text{Hom}(\mathcal{C}, H^{-1}(E)) = 0, R^1f_*H^0(E) = 0 \\
&H^p(E) = 0 \text{ for } p \ne 0,-1\}. 
\end{split}
\end{equation}

Let $\tilde P'_i \cong P'_i \oplus (P'_0)^{r'_i}$ be a decomposition into indecomposable sheaves.
Let $s'_0 = \omega_C[1]$, the shift of the dualizing sheaf of $C$, and $s'_j = \mathcal{O}_{C_j}(-1)$ for $j > 0$.
Then $\{P'_i\}_{i=0}^r$ and $\{s'_j\}_{j=0}^r$ are the sets of indecomposable projective objects and 
simple objects in ${}^0\text{Perv}(Y/X)$.

\vskip 1pc

We have $P'_i \cong P_i^* = \text{Hom}(P_i,\mathcal{O}_Y)$.
Hence $A' \cong A^o$, the opposite algebra where the addition is the same but the multiplication is reversed.

\section{Other examples}

We consider divisorial contractions in this section.

\subsection{Contraction of a projective space}

Let $f: Y \to X = \text{Spec}(R)$ be a projective birational morphism from a smooth variety to a variety 
with an isolated singularity 
whose exceptional locus is a prime divisor $E \cong \mathbf{P}^{n-1}$
with normal bundle $N_{E/Y} \cong \mathcal{O}_E(-d)$ for some $d > 0$.
$X$ has a terminal singularity if $d < n$, and 
$f$ is crepant if $d = n$.
The line bundles $\mathcal{O}_E(i)$ can be extended to line bundles $P_i = \mathcal{O}_Y(i)$ for integers $i$.

\begin{Prop}\label{P-vanishing}
$P = \bigoplus_{i=0}^{n-1} P_i$ is a tilting generator of $D^b(\text{coh}(Y))$.
\end{Prop}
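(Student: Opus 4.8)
The plan is to verify the two defining conditions of a tilting generator from Definition~\ref{tilting} for $P=\bigoplus_{i=0}^{n-1}\mathcal{O}_Y(i)$, using that $f$ is an isomorphism over $X\setminus\{x_0\}$, where $x_0=f(E)$ is the isolated singular point, that $E\cong\mathbf{P}^{n-1}$, and that $N_{E/Y}=\mathcal{O}_E(-d)$, so $\mathcal{I}_E/\mathcal{I}_E^2=\mathcal{O}_E(d)$ and, by smoothness of $Y$, $\mathcal{I}_E^k/\mathcal{I}_E^{k+1}\cong\mathcal{O}_E(kd)$ for all $k\ge 0$.

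For condition (1), note that $\mathcal{E}nd(P)=\bigoplus_{i,j=0}^{n-1}\mathcal{O}_Y(j-i)$, so it suffices to show $R^pf_*\mathcal{O}_Y(\ell)=0$ for $p>0$ and $|\ell|\le n-1$. Since $f$ is an isomorphism away from $x_0$, each $R^pf_*\mathcal{O}_Y(\ell)$ is supported at $x_0$, so by the theorem on formal functions it is enough to show $H^p(E_k,\mathcal{O}_Y(\ell)|_{E_k})=0$ for all $p>0$ and all infinitesimal neighborhoods $E_k$ of $E$. Filtering $\mathcal{O}_Y(\ell)|_{E_k}$ by powers of $\mathcal{I}_E$ gives graded pieces $\mathcal{O}_E(\ell+jd)$ for $0\le j\le k-1$. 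On $\mathbf{P}^{n-1}$ one has $H^p(\mathcal{O}_E(t))=0$ for $0<p<n-1$ and all $t$, while $H^{n-1}(\mathcal{O}_E(t))=0$ precisely when $t>-n$; since $\ell+jd\ge\ell\ge-(n-1)>-n$, all higher cohomology of the graded pieces vanishes, and the long exact sequences give the claim by induction on $k$.

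For condition (2), suppose $a\in D^b(\mathrm{coh}(Y))$ satisfies $\mathrm{Hom}(P,a[p])=0$ for all $p$. Because $X$ is affine and $P$ is locally free, this is equivalent to $Rf_*(a(-i))=0$ for $i=0,\dots,n-1$. Taking $i=0$ gives $Rf_*a=0$, and since $f$ is an isomorphism over $X\setminus\{x_0\}$ this forces $a|_{Y\setminus E}=0$, so the cohomology sheaves of $a$ are supported on $E$. Hence some power $\mathcal{I}_E^N$ annihilates them and $a=\iota_*a'$ for $\iota\colon E_N\hookrightarrow Y$ the $N$-th infinitesimal neighborhood and $a'\in D^b(\mathrm{coh}(E_N))$; the identity $Rf_*(a(-i))=R\Gamma(E_N,a'(-i))$ then shows that $a'$ is right-orthogonal to $\mathcal{O}_{E_N}(i)$ for $i=0,\dots,n-1$. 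Thus it remains to prove that these $n$ line bundles generate $D^b(\mathrm{coh}(E_N))$ as a thick subcategory, for then $a'$ is right-orthogonal to itself and vanishes, so $a=0$.

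The generation on $E_N$ I would obtain by d\'evissage along the nilpotent thickening. By Beilinson's theorem $\mathcal{O}_E,\dots,\mathcal{O}_E(n-1)$ form a full exceptional collection on $E=\mathbf{P}^{n-1}$ and hence generate $D^b(\mathrm{coh}(E))$; since every coherent sheaf on $E_N$ admits a finite $\mathcal{I}_E$-adic filtration with subquotients coherent on $E$, the category $D^b(\mathrm{coh}(E_N))$ is generated by $\iota_*D^b(\mathrm{coh}(E))$, hence by the objects $\iota_*\mathcal{O}_E(m)$. Applying the exact functor $\iota_*$ to the Koszul (Euler) resolutions on $\mathbf{P}^{n-1}$ reduces us to showing $\iota_*\mathcal{O}_E(i)\in\langle\mathcal{O}_{E_N}(0),\dots,\mathcal{O}_{E_N}(n-1)\rangle$ for $i=0,\dots,n-1$; the $\mathcal{I}_E$-adic filtration of the line bundle $\mathcal{O}_{E_N}(i)$ has top quotient $\iota_*\mathcal{O}_E(i)$ and remaining subquotients $\iota_*\mathcal{O}_E(i+jd)$ supported on the lower thickening $E_{N-1}$, which lets one induct on the thickening order $N$. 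I expect this lifting of the exceptional-collection generation from $E$ to the infinitesimal neighborhoods $E_N$ — keeping the bookkeeping of the twists $i+jd$ consistent so that the induction on $N$ closes — to be the main technical obstacle, with the hypothesis $d>0$ entering precisely to keep the relevant cohomology of $\mathbf{P}^{n-1}$ under control.
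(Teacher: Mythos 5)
Your treatment of condition (1) is correct, and it is in fact a more self-contained route than the paper's: you obtain $R^pf_*\mathcal{O}_Y(\ell)=0$ from the theorem on formal functions and the $\mathcal{I}_E$-adic filtration with graded pieces $\mathcal{O}_E(\ell+jd)$, whereas the paper gets $R^pf_*\mathcal{O}_Y(i)=0$ for $p>0$, $i>-n$ in one stroke from relative Kawamata--Viehweg vanishing (\cite{KMM}~Theorem~1.2.5) applied to the log terminal pair $(Y,(1-\epsilon)E)$, using $\mathcal{O}_E(K_Y+E)\cong\mathcal{O}_E(-n)$. In condition (2), however, there are two gaps. The smaller one: the step ``the cohomology sheaves of $a$ are killed by $\mathcal{I}_E^N$, hence $a=\iota_*a'$ for a complex $a'$ on $E_N$'' is true but not formal --- a complex whose cohomology sheaves are $\mathcal{O}_{E_N}$-modules need not itself be quasi-isomorphic to a complex of $\mathcal{O}_{E_N}$-modules. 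One needs the equivalence $D^b(\mathrm{Coh}_E(Y))\simeq D^b_E(\mathrm{coh}(Y))$, whose proof uses Artin--Rees (every coherent subsheaf supported on $E$ of a coherent sheaf $F$ embeds into $F/\mathcal{I}_E^MF$ for $M\gg0$). This is fillable.

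The serious gap is the statement you declare it ``remains to prove'': that $\mathcal{O}_{E_N},\dots,\mathcal{O}_{E_N}(n-1)$ generate $D^b(\mathrm{coh}(E_N))$ as a thick subcategory. This is \emph{false} for every $N\ge 2$, so no bookkeeping of twists can make your induction close. Indeed these line bundles are perfect complexes on $E_N$, perfect complexes form a thick subcategory, and $\mathrm{Perf}(E_N)\subsetneq D^b(\mathrm{coh}(E_N))$ because $E_N$ is non-reduced, hence not regular; concretely, the object $\iota_*\mathcal{O}_E$ that your induction tries to capture has infinite Tor-dimension over $\mathcal{O}_{E_N}$ (locally it has the periodic resolution $\cdots\to B/(t^N)\xrightarrow{t}B/(t^N)\xrightarrow{t^{N-1}}B/(t^N)\to B/(t)\to 0$), so it can never lie in the thick subcategory generated by line bundles. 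What saves your strategy is that Definition~\ref{tilting}(2) demands much less than thick generation: only that the right orthogonal of the $\mathcal{O}_{E_N}(i)$ vanish, which is exactly what you had already reduced to before overshooting. That weaker statement is true: lift the coordinate sections of $\mathcal{O}_E(1)$ to $E_N$ (they lift and still generate, by Nakayama), so the Koszul complex $0\to\mathcal{O}_{E_N}(-n)\to\cdots\to\mathcal{O}_{E_N}(-1)^{\oplus n}\to\mathcal{O}_{E_N}\to 0$ is exact, since locally its homology is killed by the unit ideal; hence orthogonality of $a'$ to the $n$ given bundles propagates to all twists $\mathcal{O}_{E_N}(m)$, $m\in\mathbf{Z}$, and then Serre vanishing applied for $m\gg0$ to the top nonvanishing cohomology sheaf of $a'$ (via the hypercohomology spectral sequence for $R\Gamma(E_N,a'(m))$) forces $a'=0$. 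Alternatively, skip the reduction to $E_N$ entirely: the paper disposes of condition (2) in one line by citing the generation lemma of \cite{BB2}~Lemma~4.2.4 or \cite{VdBergh}~Lemma~3.2.2 for a relatively ample line bundle on a projective morphism over an affine base with fiber dimension at most $n-1$, an argument carried out on $Y$ itself, which avoids the non-reduced scheme $E_N$ altogether.
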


\begin{proof}
For any small positive number $\epsilon$, the pair $(Y,(1-\epsilon)E)$ is log terminal.
By \cite{KMM}~Theorem~1.2.5, we have 
$R^pf_*\mathcal{O}_Y(i) = 0$ for $p > 0$ and $i > -n$, 
because $\mathcal{O}_E(K_Y + E) \cong \mathcal{O}_E(-n)$.
Therefore $R^pf_*(P_i^* \otimes P_j) = 0$ for $p > 0$ and $0 \le i,j \le n-1$. 

By \cite{BB2}~Lemma~4.2.4 or \cite{VdBergh}~Lemma~3.2.2, 
$D^b(\text{coh}(Y))$ is generated by the $\mathcal{O}_Y(i)$ for $0 \le i \le n-1$.
\end{proof}

We denote by $\text{Perv}(Y/X)$ the corresponding category of perverse coherent sheaves.

\begin{Prop}
$s_j=\Omega^j_E(j)[j]$ for $0 \le j \le n-1$ are the simple objects of $\text{Perv}(Y/X)$
above the singular point of $X$ 
such that $\text{Hom}(P_i,s_j) \cong k^{\delta_{ij}}$.
\end{Prop}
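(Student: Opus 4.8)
The plan is to reduce $\text{Hom}(P_i,s_j[p])$ to a cohomology computation on $E\cong\mathbf{P}^{n-1}$ and then read off both membership in $\text{Perv}(Y/X)$ and simplicity from the Bondal--Rickard equivalence of Theorem~\ref{BR}. Write $\iota\colon E\hookrightarrow Y$ for the inclusion; since $E$ is a Cartier divisor with $\mathcal{O}_Y(1)|_E\cong\mathcal{O}_E(1)$, each $s_j=\Omega^j_E(j)[j]$ is the shift of the pushforward of a sheaf on $E$. For the line bundle $\mathcal{O}_Y(i)$ and a coherent sheaf $G$ on $E$, adjunction together with the projection formula give
\[
R\text{Hom}_Y(\mathcal{O}_Y(i),\iota_*G)\cong R\Gamma(Y,\iota_*G\otimes\mathcal{O}_Y(-i))\cong R\Gamma(E,G(-i)),
\]
so taking $G=\Omega^j_E(j)$ yields
\[
\text{Hom}(P_i,s_j[p])\cong\text{Ext}^{j+p}_Y(\mathcal{O}_Y(i),\iota_*\Omega^j_E(j))\cong H^{j+p}(E,\Omega^j_E(j-i)).
\]

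First I would evaluate the right-hand side by Bott's formula on $\mathbf{P}^{n-1}$. For $0\le i,j\le n-1$ the twist $t:=j-i$ satisfies $j-(n-1)\le t\le j$, so $H^0$ (nonzero only for $t>j$) and $H^{n-1}$ (nonzero only for $t<j-(n-1)$) both vanish, leaving only the middle Hodge group $H^j(E,\Omega^j_E)\cong k$, which occurs precisely when $t=0$, i.e. $i=j$. Hence
\[
\text{Hom}(P_i,s_j[p])\cong
\begin{cases}
k & \text{if } p=0 \text{ and } i=j,\\
0 & \text{otherwise.}
\end{cases}
\]

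The vanishing for $p\ne 0$ gives $\text{Hom}(P,s_j[p])=\bigoplus_i\text{Hom}(P_i,s_j[p])=0$, so $s_j\in\text{Perv}(Y/X)$ by the definition of the category. For simplicity I would apply $\Phi=R\text{Hom}(P,-)$: the image $\Phi(s_j)=\text{Hom}(P,s_j)$ is an $A$-module whose underlying $k$-vector space is $\bigoplus_i\text{Hom}(P_i,s_j)$, which by the displayed formula is one-dimensional. A nonzero module with one-dimensional underlying space is automatically simple, so each $s_j$ is a simple object of $\text{Perv}(Y/X)$, supported on $E$ and hence lying over the singular point; the relation $\dim\text{Hom}(P_i,s_j)=\delta_{ij}$ is just the $p=0$ case. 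Finally the $s_j$ are pairwise non-isomorphic, and since $P=\bigoplus_{i=0}^{n-1}\mathcal{O}_Y(i)$ has exactly $n$ distinct indecomposable (line-bundle) summands, Lemma~\ref{yahiro} forces there to be exactly $n$ simple objects; thus $\{s_j\}_{j=0}^{n-1}$ is the complete list.

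The one routine ingredient is Bott's formula, and the only bookkeeping requiring care is the adjunction identification of $\text{Ext}_Y$ with cohomology on $E$ — in particular that no correction terms intervene, which holds because the first argument is locally free and $\iota_*$ is exact. The substantive point is the vanishing of all off-diagonal groups $H^{\bullet}(E,\Omega^j_E(j-i))$ for $i\ne j$; this is the Beilinson-type orthogonality of the collections $\{\mathcal{O}_E(i)\}$ and $\{\Omega^j_E(j)\}$, and it is exactly what simultaneously forces $s_j\in\text{Perv}(Y/X)$ and makes $\Phi(s_j)$ one-dimensional.
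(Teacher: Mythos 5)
Your proof is correct, but its central computation runs along a genuinely different line from the paper's. The paper obtains the key orthogonality
\[
R\Gamma(E,\Omega^j_E(j-i)) \cong
\begin{cases}
0 & \text{if } i \ne j,\\
k[-j] & \text{if } i = j,
\end{cases}
\qquad 0 \le i,j \le n-1,
\]
not from Bott's formula but from Beilinson's resolution of the diagonal $\Delta_E \subset E \times E$: applying the integral functor with kernel $\mathcal{O}_{\Delta_E}$ to $\mathcal{O}_E(-i)$ exhibits $\mathcal{O}_E(-i)$ as quasi-isomorphic to a complex with terms $\mathcal{O}_E(-l) \otimes R\Gamma(E,\Omega^l_E(l-i))$, and the vanishing/non-vanishing is read off from that quasi-isomorphism. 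Your route via Bott's formula is more elementary and self-contained; the paper's route is the one that generalizes (it remarks that replacing Beilinson by Kapranov's resolution handles the Grassmannian $G(r,n)$, where the relevant collection consists of Schur bundles and a direct Bott-type computation is less immediate). Beyond that, your write-up makes explicit several steps the paper uses silently: the adjunction/projection-formula identification $\text{Hom}(P_i,s_j[p]) \cong H^{j+p}(E,\Omega^j_E(j-i))$, the deduction of simplicity from the fact that $\Phi(s_j)$ is a one-dimensional $A$-module, and the completeness of the list $\{s_j\}$ -- the paper's proof stops at the orthogonality and declares the proposition proved.

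One fine point in your last step deserves care. Lemma~\ref{yahiro} is stated for a complete local base, so to invoke it you should first pass to the completion $\hat X$ at the singular point (the Proposition in \S 2 justifies that this changes nothing); moreover, indecomposability of $\mathcal{O}_Y(i)$ \emph{as a line bundle} is not the same as indecomposability of $P_i$ \emph{as a projective object of} $\text{Perv}(Y/X)$, i.e., of $\text{Hom}(P,P_i)$ as an $A$-module, which is what the counting argument needs. The quick repair: $\text{End}(\hat P_i) \cong \hat f_*\mathcal{O}_{\hat Y} \cong \hat R$ is a local ring, so each $\hat P_i$ is indecomposable in $\text{Perv}(\hat Y/\hat X)$, and the relation $\dim\text{Hom}(P_i,s_j)=\delta_{ij}$ shows the $\hat P_i$ are pairwise non-isomorphic; Lemma~\ref{yahiro} then gives exactly $n$ simple objects, so your $n$ pairwise non-isomorphic simples exhaust them.
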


\begin{proof}
By \cite{Beilinson}, there is a resolution of the diagonal $\Delta_E \subset E \times E$:
\[
0 \to \mathcal{O}_E(-n+1) \boxtimes \Omega^{n-1}_E(n-1) \to 
\dots \to \mathcal{O}_E(-1) \boxtimes \Omega^1_E(1) \to 
\mathcal{O}_E \boxtimes \mathcal{O}_E \to \mathcal{O}_{\Delta_E} \to 0.
\]
We define an integral functor $\Phi: D^b(\text{coh}(E)) \to D^b(\text{coh}(E))$ by 
$\Phi(\bullet) = p_{1*}(p_2^*\bullet \otimes \mathcal{O}_{\Delta_E})$.
Since $\Phi(\mathcal{O}_E(-i)) \cong \mathcal{O}_E(-i)$, we deduce that $\mathcal{O}_E(-i)$ is 
quasi-isomorphic to a complex
\[
\mathcal{O}_E(-n+1) \otimes R\Gamma(E,\Omega^{n-1}_E(n-1-i)) \to 
\dots \to \mathcal{O}_E(-1) \otimes R\Gamma(E,\Omega^1_E(1-i)) \to 
\mathcal{O}_E \otimes R\Gamma(E,\mathcal{O}_E(-i))
\]
where the last term is put at degree $0$.
Therefore, for $0 \le i,j \le n-1$, we have
\[
R\Gamma(E,\Omega^j_E(j-i)) \cong \begin{cases}
0 &\text{ if } i \ne j \\
k[-j] &\text{ if } i = j.
\end{cases}
\]
Thus the proposition is proved.
\end{proof}

We can obtain a similar result for the Grassmannian variety $G(r,n)$ if we use \cite{Kapranov}
instead of \cite{Beilinson}.

\vskip 1pc

$X$ has only one quotient singularity.
Let $\tilde X$ be the associated Deligne-Mumford stack.
Then there is a fully faithful functor $D^b(\text{coh}(Y)) \to D^b(\text{coh}(\tilde X))$ if
and only if $K_Y \le f^*K_X$ (\cite{DK}, \cite{SDG}).
This inequality is equivalent to saying that $X$ is not terminal, or $d \ge n$.

An associative algebra $A$ is said to be {\em homologically homogeneous} if the homological dimension 
of all simple objects are equal.
If $A$ is homologically homogeneous, then $A$ has finite global dimension and is Cohen-Macaulay 
(\cite{BH}).
Van den Bergh (\cite{VdBergh}) defined that $X = \text{Spec}(R)$ has a {\em non-commutative crepant resolution} 
if there is a reflexive $R$-module $M$ such that $A = \text{End}(M)$ is homologically homogeneous.

These facts correspond to the following:

\begin{Prop}
$A= f_*\mathcal{E}nd(P)$ is Cohen-Macaulay if and only if $d \ge n$.
\end{Prop}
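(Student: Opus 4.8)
The plan is to reduce the Cohen--Macaulay property of $A$ to a vanishing statement for local cohomology of the rank-one modules $M_\ell := f_*\mathcal{O}_Y(\ell)$, and then to compute these groups from the normal bundle of $E$. Since $P = \bigoplus_{i=0}^{n-1}\mathcal{O}_Y(i)$, one has $A = f_*\mathcal{E}nd(P) = \bigoplus_{0\le i,j\le n-1} f_*\mathcal{O}_Y(j-i)$, so that $A$ is, as an $R$-module, a finite direct sum of copies of the $M_\ell$ with $\ell$ running over $-(n-1)\le \ell\le n-1$, each value occurring at least once. As every $M_\ell$ has full support and hence dimension $n$, and $\operatorname{depth}(M\oplus N)=\min(\operatorname{depth}M,\operatorname{depth}N)$, the module $A$ is Cohen--Macaulay if and only if each $M_\ell$ with $|\ell|\le n-1$ is a maximal Cohen--Macaulay $R$-module.

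Next I would translate depth into cohomology on $Y$. Because $\ell>-n$ for all the relevant $\ell$, the vanishing used in Proposition~\ref{P-vanishing} gives $R^pf_*\mathcal{O}_Y(\ell)=0$ for $p>0$, so $Rf_*\mathcal{O}_Y(\ell)=M_\ell$; combined with the standard identity $R\Gamma_{\mathfrak m}\circ Rf_* \cong R\Gamma_E$ for the proper morphism $f$ with closed fibre $E$, this yields $H^p_{\mathfrak m}(M_\ell)\cong H^p_E(Y,\mathcal{O}_Y(\ell))$. Thus $M_\ell$ is maximal Cohen--Macaulay exactly when $H^p_E(Y,\mathcal{O}_Y(\ell))=0$ for all $p<n$.

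The heart of the argument is the computation of $H^p_E(Y,\mathcal{O}_Y(\ell))$. Since it only involves a formal neighbourhood of $E$, whose graded pieces are $\mathcal{O}_Y(\ell)\otimes S^mN^*_{E/Y}\cong\mathcal{O}_{\mathbf{P}^{n-1}}(\ell+md)$ (using $N_{E/Y}\cong\mathcal{O}_E(-d)$), I would compute everything from the cohomology of line bundles on $\mathbf{P}^{n-1}$, most transparently on the local model $Y=\mathrm{Tot}(\mathcal{O}_{\mathbf{P}^{n-1}}(-d))$ to which the problem reduces. Writing $V=Y\setminus E$ for the complement of the zero section, the affine projection $V\to\mathbf{P}^{n-1}$ gives $H^q(V,\mathcal{O}_Y(\ell))=\bigoplus_{m\in\mathbf{Z}}H^q(\mathbf{P}^{n-1},\mathcal{O}(\ell+md))$, while $H^q(Y,\mathcal{O}_Y(\ell))$ is the sub-sum over $m\ge 0$ and the restriction map is precisely the inclusion of this sub-sum; hence the local cohomology long exact sequence collapses to
\[
H^p_E(Y,\mathcal{O}_Y(\ell))\cong\bigoplus_{m\le -1}H^{p-1}(\mathbf{P}^{n-1},\mathcal{O}(\ell+md)).
\]
By Bott vanishing the terms with $1\le p-1\le n-2$ all vanish, so $H^p_{\mathfrak m}(M_\ell)=0$ automatically for $2\le p\le n-1$, and $M_\ell$ is maximal Cohen--Macaulay if and only if its $H^1$ vanishes. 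Now $H^1_E(Y,\mathcal{O}_Y(\ell))=\bigoplus_{m\le-1}H^0(\mathbf{P}^{n-1},\mathcal{O}(\ell+md))$ is nonzero precisely when $\ell+md\ge 0$ for some $m\le -1$, i.e. when $\ell\ge d$ (taking $m=-1$). Hence $M_\ell$ is maximal Cohen--Macaulay if and only if $\ell<d$, and all $M_\ell$ with $|\ell|\le n-1$ satisfy this — equivalently the largest index $n-1$ does — if and only if $n-1<d$, that is $d\ge n$.

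The main obstacle I anticipate is the careful treatment of the case $p=1$ together with the reduction to the normal-bundle model: one must check that $f_*\mathcal{O}_Y(\ell)$ genuinely fails to be $S_2$ once $\ell\ge d$, so that the reflexivity one might naively expect of a pushforward does not hold for a divisorial contraction, and that replacing $Y$ by the total-space model is harmless. For $n\ge 3$ this reduction is clean, since $H^1(\mathbf{P}^{n-1},\mathcal{O}(t))=0$ forces the formal neighbourhood to split; the surface case $n=2$ instead needs the filtration spectral sequence directly, but in all cases only the numerics $(n,d)$ enter, which is exactly what makes the threshold $d\ge n$ emerge.
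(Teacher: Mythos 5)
Your argument is correct and arrives at the threshold $d \ge n$ by a genuinely different route from the paper's. The paper never decomposes $A$ into the modules $M_\ell$ and never computes local cohomology or passes to a local model: it characterizes the Cohen--Macaulay property by the vanishing of $\text{Ext}^p_X(A,\omega_X)$ for $p>0$, uses $Rf_*\mathcal{E}nd(P)\cong A$ to rewrite this, via Grothendieck duality for $f$, as $R^pf_*\bigl(\mathcal{E}nd(P)(K_Y)\bigr)=0$ for $p>0$, and then observes that $\mathcal{E}nd(P)(K_Y)$ is a direct sum of line bundles $\mathcal{O}_Y(m)$ with $d-2n+1\le m\le d-1$. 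The relative vanishing theorem (as in the proof of Proposition~\ref{P-vanishing}) kills every term with $m\ge -n+1$, while $R^{n-1}f_*\mathcal{O}_Y(-n)\ne 0$ because $H^{n-1}(E,\mathcal{O}_E(-n))\cong k$; since $-n$ lies in the range $[d-2n+1,d-1]$ exactly when $d\le n-1$, the proposition follows. Your computation is precisely the local dual of this: by local duality, $H^1_{\mathfrak m}(M_\ell)^\vee\cong \text{Ext}^{n-1}_R(M_\ell,\omega_R)\cong R^{n-1}f_*\mathcal{O}_Y(d-n-\ell)$, which is nonzero exactly when $d-n-\ell\le -n$, i.e.\ $\ell\ge d$ --- your threshold. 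What the paper's route buys is that it runs entirely on the actual $Y$, with no statement about the formal neighbourhood of $E$ ever required; the only geometric inputs are the KMM vanishing and one non-vanishing on $E$. What your route buys is finer information --- it identifies exactly which summands fail (those $M_\ell$ with $\ell\ge d$) and shows that the failure is already in depth, namely $\operatorname{depth} M_\ell=1$ --- and, on the model, it is elementary, using only cohomology of line bundles on projective space.

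The one step you must tighten is the reduction to the total-space model, which is exactly the step the paper's argument avoids. As stated, your obstruction space is not the right one: the obstructions to splitting the formal neighbourhood of $E$ lie in $H^1(E,T_E\otimes S^mN^*_{E/Y})$ and $H^1(E,N_{E/Y}\otimes S^mN^*_{E/Y})$ for $m\ge 1$, not in $H^1(\mathbf{P}^{n-1},\mathcal{O}(t))$ alone. Fortunately, for $E=\mathbf{P}^{n-1}$ and $N_{E/Y}\cong\mathcal{O}_E(-d)$ these groups are $H^1(T_{\mathbf{P}^{n-1}}(md))$ and $H^1(\mathcal{O}_{\mathbf{P}^{n-1}}((m-1)d))$, and both vanish for every $n\ge 2$ (Euler sequence plus the standard cohomology of line bundles on projective space, which is what you are calling Bott vanishing); so the splitting argument covers $n=2$ as well and your case distinction is unnecessary. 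Alternatively, you can bypass the model entirely: for each thickening $E_m$ of $E$, Serre duality for sheaves with proper support on the smooth $Y$ gives $\text{Ext}^p_Y(\mathcal{O}_{E_m},\mathcal{O}_Y(\ell))\cong H^{n-p}(E_m,\mathcal{O}_{E_m}(K_Y-\ell))^*$, and $H^p_E(Y,\mathcal{O}_Y(\ell))=\varinjlim_m \text{Ext}^p_Y(\mathcal{O}_{E_m},\mathcal{O}_Y(\ell))$ is then computed from the $I$-adic filtration, whose graded pieces $\mathcal{O}_E(d-n-\ell+jd)$ depend only on the numerics $(n,d,\ell)$. Note, however, that this repair is Grothendieck duality in disguise: carried out this way, your proof essentially becomes the paper's.
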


\begin{proof}
$A$ is Cohen-Macaulay if and only if $A$ satisfies the condition that $\text{Ext}^p(A,\omega_X) = 0$ for $p > 0$, 
where $\omega_X$ is the canonical sheaf of $X$.
Since $R^pf_*\mathcal{E}nd(P) = 0$ for $p > 0$, we have
$Rf_*\mathcal{E}nd(P) \cong A$.
By the Grothendieck duality theorem, our condition is equivalent to that 
$R^pf_*\mathcal{E}nd(P)(K_Y) = 0$ for $p > 0$, 
where we have $\mathcal{O}_Y(K_Y) \cong \mathcal{O}_Y(d-n)$.
Since $\mathcal{E}nd(P)$ is a direct sum of the $\mathcal{O}_Y(m)$ for $-n+1 \le m \le n-1$,
it follows that $\mathcal{E}nd(P)(K_Y)$ is a direct sum of the $\mathcal{O}_Y(m)$ for $d -2n + 1 \le m \le d-1$.
We have 
$R^pf_*\mathcal{O}_Y(m) = 0$ for $p > 0$ and $m \ge -n+1$ by the proof of Proposition~\ref{P-vanishing}.
On the other hand, since $H^{n-1}(E,\mathcal{O}_E(-n)) \cong k$, we have 
$R^{n-1}f_*\mathcal{O}_Y(-n) \ne 0$.
Hence we obtain our assertion.
\end{proof}

\begin{Expl}
Assume that $n = 2$. 
By the above lemma, $A$ is homologically homogeneous if and only if $d \ge 2$; 
we calculate the homological dimension as follows:

\begin{enumerate}
\item $h.d.(s_0) = 1$ if $d = 1$ and $=2$ if $d \ge 2$. 

\item $h.d.(s_1) = 2$.
\end{enumerate}

Indeed we can check the assertions in the following way.

(1) Since $\mathcal{O}_E(K_Y) \cong \mathcal{O}_E(d-n)$, we have 
$\text{Hom}^2(\mathcal{O}_E,\mathcal{O}_E)^* \cong \text{Hom}(\mathcal{O}_E,\mathcal{O}_E(d-2)) \ne 0$ 
for $d \ge 2$, hence $h.d.(s_0) \ge 2$, where we denote $\text{Hom}^p(a,b) = \text{Hom}(a,b[p])$.
On the other hand, we note that, since $Y$ has global dimension $2$, we have $h.d.(s_0) \le 2$.
If $d=1$, then there is an exact sequence 
\[
0 \to \mathcal{O}_Y(1) \to \mathcal{O}_Y \to \mathcal{O}_E \to 0
\]
where $\mathcal{O}_Y$ and $\mathcal{O}_Y(1)$ are projective objects.
Hence $h.d.(s_0) = 1$.

(2) We have again $\text{Hom}^2(\mathcal{O}_E(-1)[1],\mathcal{O}_E(-1)[1])^* 
\cong \text{Hom}(\mathcal{O}_E,\mathcal{O}_E(d-2)) \ne 0$ for $d \ge 2$.
If $d = 1$, then $\text{Hom}^2(\mathcal{O}_E(-1)[1],\mathcal{O}_E) 
= \text{Hom}^1(\mathcal{O}_E(-1),\mathcal{O}_E) \ne 0$, 
since there is a non-trivial exact sequence
\[
0 \to \mathcal{O}_E(1) \to \mathcal{O}_{2E} \to \mathcal{O}_E \to 0.
\]
\end{Expl}

\subsection{Contraction of a singular quadric surface}

Let $X$ be a singularity of dimension $3$ defined by an equation
\[
xy+z^2+w^3=0
\]
in the completion of $\mathbf{C}^4$ at the origin, and let $f: Y \to X$ be the blowing up of the origin.
$Y$ is smooth, and the exceptional divisor $E$ of $f$ is a singular quadric surface with a singular point $Q$.

Let $\mathcal{O}_E(m)$ for $m \in \mathbf{Z}$ be a reflexive sheaf of rank $1$ on $E$ 
corresponding to a Weil divisor $mC_1$, where $C_1$ is a line on the cone $E$.
$mC_1$ is a Cartier divisor if and only if $m$ is even.
Let $\hat{\Omega}^1_E$ be the double dual of the sheaf of K\"ahler differentials on $E$.
It is a reflexive sheaf of rank $2$ on $E$ with a short exact sequence
\[
0 \to \hat{\Omega}^1_E \to \mathcal{O}_E(-1)^{\oplus 2} \oplus \mathcal{O}_E(-2) \to \mathcal{O}_E \to 0
\]
because $E \cong \mathbf{P}(1,1,2)$.
It follows that, for the double dual of the sheaf of 
differential $2$-forms, we have $\hat{\Omega}^2_E \cong \mathcal{O}_E(-4)$. 

Let $L$ be a line bundle of $Y$ such that $L \vert_E = \mathcal{O}_E(2)$, let $S \in \vert L \vert$ be a 
generic member, and let $C_2 = S \cap E \in \vert \mathcal{O}_E(2) \vert$.
We take a generic curve $C_0$ on $S$ such that $C_0 \cap C_2 = C_1 \cap C_2$ scheme theoretically.

It is known that there is a non-trivial extension 
\[
0 \to \mathcal{O}_E(1) \to \bar P_1 \to \mathcal{O}_E(1) \to 0
\] 
on $E$ such that $\bar P_1$ is a locally free sheaf of rank $2$ (\cite{NC}); 
$\bar P_1$ is defined by the following commutative diagram of exact sequences
\[
\begin{CD}
@. 0 @. 0 @. 0 @. \\
@. @VVV @VVV @VVV @. \\
0 @>>> \mathcal{O}_E(-1) @>>> \bar P_1(-L) @>>> \mathcal{O}_E(-1) @>>> 0 \\
@. @VVV @VVV @VVV @. \\
0 @>>> \mathcal{O}_E(-1) @>>> \mathcal{O}_E^{\oplus 2} @>>> \mathcal{O}_E(1) @>>> 0 \\
@. @. @VVV @VVV @. \\
@. @. \mathcal{O}_{C_2}(1) @>=>> \mathcal{O}_{C_2}(1) \\
@. @.  @VVV @VVV @. \\
@. @. 0 @. 0 @. 
\end{CD}
\]
Similarly there is a locally free sheaf $P_1$ of rank $2$ on $Y$ such that $P_1 \vert_E \cong \bar P_1$
defined by the following exact sequence
\[
0 \to P_1(-S) \to \mathcal{O}_Y^{\oplus 2} \to \mathcal{O}_S(C_0) \to 0
\]
where the right hand side arrow is obtained as a composition of surjective homomorphisms
$\mathcal{O}_Y^{\oplus 2} \to \mathcal{O}_S^{\oplus 2} \to \mathcal{O}_S(C_0)$.

We denote by $P_0 = \mathcal{O}_Y$ and $P_2 =  L$.
Let $s_j=\hat{\Omega}^j_E(j)[j]$ for $j = 0,1,2$, where
$\hat{\Omega}^1_E(1)$ denotes the double dual of $\hat{\Omega}^1_E \otimes \mathcal{O}_E(1)$.
Thus $s_0 = \mathcal{O}_E$ and $s_2 = \mathcal{O}_E(-2)[2]$.

\begin{Prop}\label{tilting P}
(1) The sum $P = \sum_{i=0}^2 P_i$ is a tilting generator of $D^b(\text{coh}(Y))$.

(2) $\{s_0,s_1,s_2\}$ is the set of simple objects in the category of perverse sheaves $\text{Perv}(Y/X)$ for 
$f: Y \to X$ defined by $P$ such that $\dim \text{Hom}(P_i,s_j) = \delta_{ij}$.
\end{Prop}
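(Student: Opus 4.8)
My plan is to verify the two defining conditions of a tilting generator for $P = \bigoplus_{i=0}^2 P_i$, and then to identify the simple objects by dualizing against the $P_i$, exactly as in the earlier projective-space example. For part (1), condition (2) of Definition~\ref{tilting} (generation of $D^b(\text{coh}(Y))$) should follow from a local resolution argument: since $E \cong \mathbf{P}(1,1,2)$ admits a Beilinson-type resolution of the diagonal built from the reflexive sheaves $\mathcal{O}_E(m)$ and their differential-form twists $\hat\Omega^j_E(j)$, I would pull this back along $f$ (using \cite{BB2}~Lemma~4.2.4 or \cite{VdBergh}~Lemma~3.2.2 as in Proposition~\ref{P-vanishing}) to show that $\mathcal{O}_Y, P_1, L$ split-generate the derived category. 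The nontrivial point here is that $P_1$ is genuinely rank $2$ and is \emph{not} a line bundle: it is the locally free lift of the non-split extension $\bar P_1$ on $E$, so I must check that the three objects $\mathcal{O}_Y$, $P_1$, $L$ together carry as much information as the naive collection $\{\mathcal{O}_Y, \mathcal{O}_Y(1)\text{-analogue}, L\}$ would. I expect to exploit the two defining exact sequences relating $P_1$ (and $\bar P_1$) to $\mathcal{O}_E(1)^{\oplus 2}$-type sheaves to reduce generation to the reflexive line bundles $\mathcal{O}_E(m)$, which \emph{do} generate by the weighted Beilinson resolution.

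The vanishing condition (1), namely $R^p f_* \mathcal{E}nd(P) = 0$ for $p>0$, is where the bulk of the computation lies. I would expand $\mathcal{E}nd(P) = \bigoplus_{i,j} P_i^* \otimes P_j$ and reduce each summand to a cohomology computation on $E$ via the defining exact sequences. For the summands involving only $P_0 = \mathcal{O}_Y$ and $P_2 = L$, this is a direct Kawamata--Viehweg-type vanishing statement ($R^p f_* \mathcal{O}_Y(mL)$ for small $m$), using that $E$ is the exceptional divisor of the blow-up of the cDV singularity $xy+z^2+w^3=0$. For the summands involving $P_1$, I would break the endomorphism sheaf into pieces using the short exact sequences $0 \to P_1(-S) \to \mathcal{O}_Y^{\oplus 2} \to \mathcal{O}_S(C_0) \to 0$ and its restriction to $E$, and then compute on $E$ with the structure of $\bar P_1$ as an extension of $\mathcal{O}_E(1)$ by $\mathcal{O}_E(1)$. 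The restriction to $E$ is justified because the relevant higher direct images are supported on the fiber, so formal completion along $E$ suffices.

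For part (2), once $P$ is a tilting generator, Lemma~\ref{yahiro} guarantees exactly three simple objects $s_0, s_1, s_2$ characterized by $\dim\text{Hom}(P_i, s_j) = \delta_{ij}$ after reindexing. So the real content is to verify that the proposed objects $s_j = \hat\Omega^j_E(j)[j]$ satisfy these orthogonality relations. The plan is to compute $\text{Hom}(P_i, s_j[p]) = \text{Hom}(P_i, \hat\Omega^j_E(j)[j+p])$; by the adjunction through $\Phi(\bullet)=R\text{Hom}(P,\bullet)$ this reduces to a graded cohomology calculation $R\Gamma(E, P_i^*\vert_E \otimes \hat\Omega^j_E(j))$, and I would show this is concentrated in degree $p=0$ and equals $k^{\delta_{ij}}$. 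The key input is the exactness of the Euler-type sequence $0 \to \hat\Omega^1_E \to \mathcal{O}_E(-1)^{\oplus 2}\oplus\mathcal{O}_E(-2) \to \mathcal{O}_E \to 0$ together with $\hat\Omega^2_E \cong \mathcal{O}_E(-4)$, which let me express each $\hat\Omega^j_E(j)$ in terms of the reflexive line bundles and apply the analogue of the Beilinson orthogonality $R\Gamma(E, \hat\Omega^j_E(j-i))$ concentrated in degree $j$ when $i=j$ and vanishing otherwise.

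The main obstacle I anticipate is the bookkeeping around the rank-$2$ sheaf $P_1$ and $\bar P_1$: because $\bar P_1$ is a \emph{non-split} self-extension of $\mathcal{O}_E(1)$, I cannot simply split it off, and I must track how the extension class interacts with the cohomology computations in both the vanishing step and the orthogonality step. In particular, verifying $\dim\text{Hom}(P_1, s_1) = 1$ (rather than $2$) will require using the specific extension class defining $\bar P_1$, and checking that $\text{Hom}(P_1, s_0)$ and $\text{Hom}(P_1, s_2)$ vanish will depend on the auxiliary curves $C_0, C_2$ having been chosen generically so that the relevant connecting maps are the expected isomorphisms. The singular point $Q$ of the quadric cone $E$ forces me to work consistently with \emph{reflexive} sheaves and double duals throughout, so I would take care that each exact sequence I use is exact as written (and not merely up to torsion supported at $Q$) before drawing cohomological conclusions.
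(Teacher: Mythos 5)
Your plan contains two genuine gaps, both traceable to treating the singular surface $E \cong \mathbf{P}(1,1,2)$ as if it were a smooth projective space. First, the generation step: a Beilinson-type resolution of the diagonal by boxes of the sheaves $\mathcal{O}_E(m)$ and $\hat\Omega^j_E(j)$ does not exist on $E$ itself (only on the smooth stack $\mathbf{P}(1,1,2)$; on the singular coarse space the diagonal has infinite resolutions, reflected in the fact that $\mathcal{E}xt^1(\mathcal{O}_E(1),\mathcal{O}_E(1)) \cong \mathcal{O}_Q \ne 0$ with $2$-periodic higher local Exts at the $A_1$ point). More fundamentally, even if the reflexive sheaves generated $D^b(\text{coh}(E))$, this could never yield generation of $D^b(\text{coh}(Y))$: the thick subcategory generated by objects supported on $E$ consists only of objects supported on $E$, so "reducing generation to sheaves on $E$" is the wrong direction. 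The paper's argument stays on $Y$ throughout: the lemma of Bondal--Van den Bergh gives generation by $\mathcal{O}_Y, \mathcal{O}_Y(L), \mathcal{O}_Y(2L)$, and then a chain of short exact sequences through the auxiliary surface $S$ and the sheaves $\mathcal{O}_S(4C_0), \mathcal{O}_S(3C_0)$ --- ending with $0 \to P_1 \to \mathcal{O}_Y(L)^{\oplus 2} \to \mathcal{O}_S(3C_0) \to 0$ --- trades $\mathcal{O}_Y(2L)$ for $P_1$. You mention the curves $C_0, C_2$ only in connection with part (2), but they are in fact the essential device for part (1).

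Second, the orthogonality $R\text{Hom}(P_1,s_1) \cong k$: you correctly flag this as the crux, but "using the specific extension class" is not an argument, and the Beilinson orthogonality $R\Gamma(E,\hat\Omega^j_E(j-i))$ you invoke is exactly what is unavailable here --- $\bar P_1$ is not a sum of the $\mathcal{O}_E(m)$, and because $\mathcal{O}_E(1)$ is reflexive but not locally free at $Q$, computations like $R\text{Hom}(\mathcal{O}_E(1),F) = R\Gamma(E,F(-1))$ fail: the local Ext sheaves at $Q$ contribute in every positive degree, so the long-exact-sequence bookkeeping you propose does not close up without serious matrix-factorization input at the $A_1$ point. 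The paper's decisive (and external) ingredient is that $\bar P_1$ is the \emph{versal non-commutative deformation} of $\mathcal{O}_E(1)$ on $E$, quoted from \cite{NC}; this gives at once $\text{Hom}_E(\bar P_1,\mathcal{O}_E(1)) \cong k$ and $\text{Hom}^1_E(\bar P_1,\mathcal{O}_E(1)) = 0$, after which $\text{Hom}^2_E(\bar P_1,\mathcal{O}_E(1)) \cong \text{Hom}_E(\mathcal{O}_E(5),\bar P_1)^* = 0$ follows from Serre duality with $\omega_E \cong \mathcal{O}_E(-4)$ and degree reasons. (The reductions $R\text{Hom}(P_1,s_1) \cong R\text{Hom}(P_1,\mathcal{O}_E(1)) \cong R\text{Hom}_E(\bar P_1,\mathcal{O}_E(1))$, and likewise your computations for the pairs involving $P_0, P_2, s_0, s_2$, do go through essentially as you describe, as does your outline of the vanishing $R^pf_*\mathcal{E}nd(P)=0$ provided you check the twists by $\mathcal{O}_E(2m) = \mathcal{O}_E(-mE)$ for all $m>0$, which the formal-functions argument requires and the paper carries out via the four-term sequence with $\mathcal{E}xt^1(\mathcal{O}_E(1),\mathcal{O}_E(1)) \cong \mathcal{O}_Q$.) Without the versality input or an equivalent local computation at $Q$, your proof of part (2) is incomplete precisely at the one entry of the matrix $\dim\text{Hom}(P_i,s_j)$ that distinguishes this example from the smooth case.
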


\begin{proof}
(1) We have $R^pf_*\mathcal{H}om(P_i,P_j) =0$ for $p > 0$ and for all $i,j$ except $i=j=1$, because
$H^p(E,\mathcal{O}_E(i)) = 0$ for $p > 0$ and $i \ge -3$ by the vanishing theorem (\cite{KMM}~Theorem~1.2.5) 
and $\mathcal{O}_E(-E) \cong \mathcal{O}_E(2)$.

We prove that $R^pf_*\mathcal{H}om(P_1,P_1) =0$ for $p > 0$.
There is an exact sequence
\begin{equation}\label{OQ}
0 \to \mathcal{O}_E \to \mathcal{H}om(\bar P_1,\mathcal{O}_E(1)) \to \mathcal{O}_E 
\to \mathcal{E}xt^1(\mathcal{O}_E(1),\mathcal{O}_E(1)) \to 0
\end{equation}
with $\mathcal{E}xt^1(\mathcal{O}_E(1),\mathcal{O}_E(1)) \cong \mathcal{O}_Q$ 
for the singular point $Q$ of $E$.
Since $H^p(E,\text{Ker}(\mathcal{O}_E \to \mathcal{O}_Q)) = 0$ for all $p$, 
we deduce that $H^p(E, \mathcal{H}om(\bar P_1,\mathcal{O}_E(1))) = 0$ for $p > 0$.
Therefore $H^p(E, \mathcal{H}om(\bar P_1,\bar P_1)) = 0$ for $p > 0$.
We can also check that, for $p > 0$ and $m > 0$, we have 
$H^p(E,\text{Ker}(\mathcal{O}_E(2m) \to \mathcal{O}_Q(2m))) = 0$, 
$H^p(E, \mathcal{H}om(\bar P_1,\mathcal{O}_E(1))(2m)) = 0$, and 
$H^p(E, \mathcal{H}om(\bar P_1,\bar P_1)(2m)) = 0$.
Therefore we have $R^pf_*\mathcal{H}om(P_1,P_1) =0$ for $p > 0$.

We prove that $P_0,P_1,P_2$ generate $D^b(\text{coh}(Y))$.
First, $\mathcal{O}_Y(iL)$ for $i=0,1,2$ generate $D^b(\text{coh}(Y))$ 
by \cite{BB2}~Lemma~4.2.4 or \cite{VdBergh}~Lemma~3.2.2.
By the exact sequence
\[
0 \to \mathcal{O}_Y(L) \to \mathcal{O}_Y(2L) \to \mathcal{O}_S(4C_0) \to 0
\]
we deduce that $\mathcal{O}_Y, \mathcal{O}_Y(L), \mathcal{O}_S(4C_0)$ generate $D^b(\text{coh}(Y))$.
Then by 
\[
\begin{split}
&0 \to \mathcal{O}_Y \to \mathcal{O}_Y(L) \to \mathcal{O}_S(2C_0) \to 0 \\
&0 \to \mathcal{O}_S(2C_0) \to \mathcal{O}_S(3C_0)^{\oplus 2} \to \mathcal{O}_S(4C_0) \to 0
\end{split}
\]
we deduce that $\mathcal{O}_Y, \mathcal{O}_Y(L), \mathcal{O}_S(3C_0)$ generate $D^b(\text{coh}(Y))$.
Finally, by
\[
0 \to P_1 \to \mathcal{O}_Y(L)^{\oplus 2} \to \mathcal{O}_S(3C_0) \to 0
\]
we conclude that $P_0,P_1,P_2$ generate $D^b(\text{coh}(Y))$.

\vskip 1pc

(2) We prove that $R\text{Hom}(P_i,s_j) \cong k^{\delta_{ij}}$, so that $s_j \in \text{Perv}(Y/X)$ and 
$\dim \text{Hom}(P_i,s_j) = \delta_{ij}$.
For $j = 0$, we have 
\[
\begin{split}
&R\text{Hom}(P_0,s_0) \cong R\Gamma(E,\mathcal{O}_E) \cong k \\
&R\text{Hom}(P_2,s_0) \cong R\Gamma(E,\mathcal{O}_E(-2)) \cong 0.
\end{split}
\]
Since $R\Gamma(E,\mathcal{O}_E(-1)) \cong 0$, we also have 
$R\text{Hom}(P_1,s_0) \cong R\Gamma(E,\bar P_1^*) \cong 0$.

For $j = 2$, we have 
\[
\begin{split}
&R\text{Hom}(P_0,s_2) \cong R\Gamma(E,\mathcal{O}_E(-2)[2]) \cong 0 \\
&R\text{Hom}(P_2,s_2) \cong R\Gamma(E,\mathcal{O}_E(-4)[2]) \cong k.
\end{split}
\]
Since $R\Gamma(E,\mathcal{O}_E(-3)) \cong 0$, we also have 
$R\text{Hom}(P_1,s_2) \cong 0$.

For $j = 1$, we have a distinguished triangle
\[
s_1[-1] \to \mathcal{O}_E^{\oplus 2} \oplus \mathcal{O}_E(-1) \to \mathcal{O}_E(1) \to s_1.
\]
Since $\text{Hom}(P_0,\mathcal{O}_E^{\oplus 2}) \cong \Gamma(E,\mathcal{O}_E^{\oplus 2}) \to 
\text{Hom}(P_0,\mathcal{O}_E(1)) \cong \Gamma(E,\mathcal{O}_E(1))$ is an isomorphism, 
we have $R\text{Hom}(P_0,s_1) \cong 0$.
We also have $R\text{Hom}(P_2,s_1) \cong 0$, because $R\Gamma(E,\mathcal{O}_E(-i)) \cong 0$ for $i = 1,2,3$.

We have $R\text{Hom}(P_1,\mathcal{O}_E) \cong R\text{Hom}(P_1,\mathcal{O}_E(-1)) \cong 0$.
Hence we have isomorphisms 
\[
R\text{Hom}_E(\bar P_1,\mathcal{O}_E(1)) \cong R\text{Hom}(P_1,\mathcal{O}_E(1))
\cong R\text{Hom}(P_1,s_1).
\]
$\bar P_1$ is a versal non-commutative deformation of $\mathcal{O}_E(1)$ on $E$ (\cite{NC}).
Therefore we have
$\text{Hom}_E(\bar P_1,\mathcal{O}_E(1)) \cong k$ and $\text{Hom}^1_E(\bar P_1,\mathcal{O}_E(1)) \cong 0$.
By duality, we have $\text{Hom}^2_E(\bar P_1,\mathcal{O}_E(1)) \cong \text{Hom}_E(\mathcal{O}_E(5),\bar P_1)^* 
\cong 0$.
Our claim is proved.
\end{proof}

Let $\mathcal{D}$ be the left orthogonal complement of an exceptional object $s_2 = \mathcal{O}_E(-2)[2]$ 
in $D^b(\text{coh}(Y))$:
\[
\mathcal{D} = \{a \in D^b(\text{coh}(Y)) \mid \text{Hom}(a, s_2[p]) = 0 \,\,\forall p\}.
\]
We can extend the concept of the tilting generators for triangulated categories such as $\mathcal{D}$,
and consider the categories of perverse coherent sheaves.

\begin{Prop}
$P_{\mathcal{D}} = P_0 \oplus P_1$ is a tilting generator of the triangulated category $\mathcal{D}$.
\end{Prop}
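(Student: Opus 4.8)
The extended notion of a tilting generator for the triangulated subcategory $\mathcal{D}$ asks for an object $P_{\mathcal{D}} \in \mathcal{D}$ satisfying two conditions: the direct-image vanishing $R^p f_* \mathcal{E}nd(P_{\mathcal{D}}) = 0$ for $p > 0$, and the generation property that the only $a \in \mathcal{D}$ with $\text{Hom}(P_{\mathcal{D}}, a[p]) \cong 0$ for all $p$ is $a \cong 0$. My plan is to settle the first two requirements by quoting the computations already made for $P = P_0 \oplus P_1 \oplus P_2$, and then to concentrate the real effort on generation. First I would note that $P_0, P_1 \in \mathcal{D}$: the calculations in the proof of Proposition~\ref{tilting P} give $R\text{Hom}(P_0, s_2) \cong R\text{Hom}(P_1, s_2) \cong 0$, which is exactly the defining condition $\text{Hom}(P_i, s_2[p]) = 0$ for the left orthogonal of $s_2$. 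Likewise the vanishing $R^p f_* \mathcal{H}om(P_i, P_j) = 0$ for $p > 0$ and $i,j \in \{0,1\}$ is a sub-case of what was verified there, so $R^p f_* \mathcal{E}nd(P_{\mathcal{D}}) = 0$ for $p > 0$.

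For generation I would transport everything to modules via the Bondal--Rickard equivalence $\Phi = R\text{Hom}(P,-)$ of Theorem~\ref{BR}, with $P = P_0 \oplus P_1 \oplus P_2$ and $A = f_* \mathcal{E}nd(P)$. Writing $e_0, e_1, e_2$ for the idempotents cutting out the summands, one has $\Phi(P_i) \cong e_i A$. The Hom-computations in Proposition~\ref{tilting P} show that $\Phi(s_2) e_i \cong R\text{Hom}(P_i, s_2)$ vanishes for $i = 0,1$ and equals $k$ in degree $0$ for $i = 2$; hence $\Phi(s_2) \cong S_2$, the simple top of $e_2 A$, placed in cohomological degree $0$. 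Now let $a \in \mathcal{D}$ satisfy $R\text{Hom}(P_0 \oplus P_1, a) \cong 0$ and set $M = \Phi(a)$. Since $R\text{Hom}_A(e_i A, M) \cong M e_i$, the hypothesis says $M e_0$ and $M e_1$ are acyclic, i.e. $H^p(M) e_0 = H^p(M) e_1 = 0$ for all $p$; equivalently every cohomology module $H^p(M)$ has $S_2$ as its only composition factor. On the other hand $a \in \mathcal{D}$ translates into $R\text{Hom}_A(M, S_2) \cong 0$.

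I would then derive a contradiction unless $M \cong 0$. Suppose $M \not\cong 0$ and let $p_0$ be the least integer with $H^{p_0}(M) \ne 0$. Using the truncation triangle $H^{p_0}(M)[-p_0] \to M \to \tau_{> p_0} M$ together with the $t$-structure orthogonality $\text{Hom}(X, Y) = 0$ for $X \in D^{\ge p_0+1}$ and $Y \in D^{\le p_0}$, one gets $\text{Hom}(M, S_2[-p_0]) \cong \text{Hom}_A(H^{p_0}(M), S_2)$. Because $H^{p_0}(M)$ is a nonzero finitely generated module all of whose composition factors are $S_2$, its semisimple top is a nonzero sum of copies of $S_2$, so this Hom is nonzero, contradicting $R\text{Hom}_A(M, S_2) \cong 0$. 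Hence $M \cong 0$, i.e. $a \cong 0$; and since $P_0, P_1, P_2$ generate $D^b(\text{coh}(Y))$ this is precisely the generation of $\mathcal{D}$ by $P_0 \oplus P_1$.

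The step I expect to be the main obstacle is the generation argument, and within it, making the two orthogonality hypotheses interact: the vanishing $R\text{Hom}(P_0 \oplus P_1, a) \cong 0$ forces $M$ to be supported only at the vertex corresponding to $P_2$, while $a \in \mathcal{D}$ forces $M$ to be left-orthogonal to the simple $S_2$ sitting at that same vertex, and these coexist only for $M \cong 0$. The delicate points are the clean identification $\Phi(s_2) \cong S_2$ as a genuine simple module in degree $0$ (which uses the precise $R\text{Hom}(P_i, s_2)$ computations, not just dimensions of $\text{Hom}$), and the derived truncation that extracts a nonzero map to $S_2$ from the bottom cohomology of $M$. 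As a cross-check one may instead use the exceptional object $s_2$ directly: its projection functor onto $\mathcal{D}$ is triangulated and essentially surjective, hence carries the generators $P_0, P_1, P_2$ to generators of $\mathcal{D}$, with $P_0, P_1$ fixed and the projection of $P_2$ fitting in a triangle $\to P_2 \to s_2$. Since $s_2$ is exceptional, the minimal projective resolution of $S_2 = \Phi(s_2)$ involves $e_2 A = \Phi(P_2)$ only in degree $0$, so this projection is the image under $\Phi^{-1}$ of the first syzygy of $S_2$ and already lies in the thick subcategory generated by $P_0$ and $P_1$.
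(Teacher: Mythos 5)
Your proof is correct, but it reaches the crucial generation statement by a genuinely different route than the paper. The paper stays on the geometric side: from the exact sequence $0 \to P_2 \to P_0 \to \mathcal{O}_E \to 0$ it replaces the generator $P_2$ by $\mathcal{O}_E$, so that $P_0, P_1, \mathcal{O}_E$ generate $D^b(\text{coh}(Y))$, and then invokes Serre duality with $\omega_Y = \mathcal{O}_Y(-L)$ to get $\text{Hom}(\mathcal{O}_E, a[p]) \cong \text{Hom}(a, s_2[1-p])^*$; hence for $a \in \mathcal{D}$ the generator $\mathcal{O}_E$ imposes no extra condition, and orthogonality to $P_0 \oplus P_1$ alone forces $a \cong 0$. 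You instead transport the problem to $D^b(\text{mod-}A)$ via Theorem~\ref{BR} and argue purely module-theoretically: $\Phi(s_2) \cong S_2$, the hypothesis $R\text{Hom}(P_0 \oplus P_1, a) \cong 0$ kills the $e_0$- and $e_1$-parts of every cohomology module of $M = \Phi(a)$, and Nakayama for the Jacobson radical of the semiperfect algebra $A$, combined with the truncation triangle, produces a nonzero map from the lowest nonvanishing cohomology to $S_2$, contradicting $a \in \mathcal{D}$. Both arguments are complete. The paper's is shorter but exploits a coincidence of this example, namely that the Serre functor sends $\mathcal{O}_E$ into the subcategory generated by $s_2$ because $\omega_Y\vert_E \cong \mathcal{O}_E(-2)$; yours needs no such geometric input and in fact shows, for any tilting generator over a complete local base, that the left orthogonal of the simple at one vertex is weakly generated by the remaining indecomposable projectives. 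Two small caveats: your phrase that every $H^p(M)$ has ``$S_2$ as its only composition factor'' should be read as a statement about simple subquotients or about the top, since finite length of $H^p(M)$ is not guaranteed a priori --- though the step you actually use (the top is a nonzero direct sum of copies of $S_2$) is exactly right; and the closing ``cross-check'' via the projection functor of the semiorthogonal decomposition is the one sketchy portion (it is not immediate that weak generation is preserved under that projection, and the syzygy argument needs the finite global dimension of $A$, which does hold here since $Y$ is smooth), but nothing in your main argument rests on it.
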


\begin{proof}
We prove that $P_0,P_1 \in \mathcal{D}$, and that $P_{\mathcal{D}}$ generate $\mathcal{D}$.
We already know that $\text{Hom}(P_{\mathcal{D}}, P_{\mathcal{D}}[p]) = 0$ for $p \ne 0$.

By the vanishing theorem, we have $R\Gamma(E,\mathcal{O}_E(i)) = 0$ for $i \ge -3$.
Thus $R\text{Hom}(P_0,\mathcal{O}_E(-2)) \cong R\Gamma(E,\mathcal{O}_E(-2)) \cong 0$.
By the duality, we have 
\[
R\text{Hom}(P_1,\mathcal{O}_E(-2)) \cong R\text{Hom}_E(\bar P_1,\mathcal{O}_E(-2)) \cong
R\text{Hom}_E(\mathcal{O}_E(2), \bar P_1[2])^* \cong 0.
\]
Hence $P_0,P_1 \in \mathcal{D}$.

There is an exact sequence
\[
0 \to P_2 \to P_0 \to \mathcal{O}_E \to 0.
\] 
Hence $P_0,P_1,\mathcal{O}_E$ generate $D^b(\text{coh}(Y))$.
Since $\omega_Y = \mathcal{O}_Y(-L)$, 
we have
$\text{Hom}(\mathcal{O}_E, a) \cong \text{Hom}(a,\mathcal{O}_E(-2)[3])^*$ for any $a \in D^b(\text{coh}(Y))$
by the Serre duality. 
Thus $a \in \mathcal{D}$ if and only if $R\text{Hom}(\mathcal{O}_E, a) = 0$.
Therefore $P_0,P_1$ generate $\mathcal{D}$.
\end{proof}

Let 
\[
\text{Perv}(\mathcal{D}) = \{a \in \mathcal{D} \mid \text{Hom}(P_{\mathcal{D}},a[p]) = 0 \text{ for } p \ne 0\}
\]
be the heart of a $t$-structure of $\mathcal{D}$ defined by the tilting generator $P_{\mathcal{D}}$.
We will determine the set of all simple objects in $\text{Perv}(\mathcal{D})$.

We have 
\[
\text{Hom}(s_0,s_2[1]) = \text{Hom}(\mathcal{O}_E, \mathcal{O}_E(-2)[3]) 
\cong \text{Hom}(\mathcal{O}_E,\mathcal{O}_E)^* \cong k
\]
by the Serre duality.
Let 
\[
0 \to s_2 \to s'_0 \to s_0 \to 0
\]
be the corresponding extension in $\text{Perv}(Y/X)$, the category of perverse coherent sheaves
defined by the tilting generator $P$ in $D^b(\text{coh}(Y))$.
Let $s'_1 = s_1$.

\begin{Prop}
$\{s'_0,s'_1\}$ is the set of all simple objects in $\text{Perv}(\mathcal{D})$, and 
$\text{Hom}(P_i,s'_j) \cong k^{\delta_{ij}}$ for $i,j = 0,1$.
\end{Prop}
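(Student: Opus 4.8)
The plan is to pass to the algebra side via the tilting equivalence $\Phi_{\mathcal{D}} = R\text{Hom}(P_{\mathcal{D}}, -): \mathcal{D} \to D^b(\text{mod-}A_{\mathcal{D}})$ with $A_{\mathcal{D}} = f_*\mathcal{E}nd(P_{\mathcal{D}})$, under which $\text{Perv}(\mathcal{D})$ is identified with $(\text{mod-}A_{\mathcal{D}})$. Since $R$ is a complete local ring and $A_{\mathcal{D}}$ is finite as an $R$-module, Lemma~\ref{yahiro} applies to $A_{\mathcal{D}}$: as $P_{\mathcal{D}} = P_0 \oplus P_1$ has exactly the two indecomposable summands $P_0, P_1$, the category $\text{Perv}(\mathcal{D})$ has exactly two simple objects, and a simple object is detected by the vector $(\dim\text{Hom}(P_0, -), \dim\text{Hom}(P_1, -))$. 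The key reduction I would use is that for $a \in \text{Perv}(\mathcal{D})$ one has $\dim_k \Phi_{\mathcal{D}}(a) = \dim\text{Hom}(P_0, a) + \dim\text{Hom}(P_1, a)$, so any object of $\text{Perv}(\mathcal{D})$ with $\dim\text{Hom}(P_i, a) = \delta_{ij}$ for $i = 0,1$ is a one-dimensional $A_{\mathcal{D}}$-module and hence automatically simple. Thus it suffices to prove $s'_0, s'_1 \in \text{Perv}(\mathcal{D})$ together with $\text{Hom}(P_i, s'_j) \cong k^{\delta_{ij}}$ for $i, j = 0,1$; the two resulting simples then carry the distinct dimension vectors $(1,0)$ and $(0,1)$, so they are non-isomorphic and exhaust the two simples provided by Lemma~\ref{yahiro}.

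To obtain the Hom-values and the heart condition I would apply $R\text{Hom}(P_i, -)$ to the defining triangle $s_2 \to s'_0 \to s_0 \to s_2[1]$ and insert $R\text{Hom}(P_i, s_j) \cong k^{\delta_{ij}}$ from Proposition~\ref{tilting P}. This yields $R\text{Hom}(P_0, s'_0) \cong k$ and $R\text{Hom}(P_1, s'_0) \cong 0$, both concentrated in degree $0$; for $s'_1 = s_1$ the required values are immediate from Proposition~\ref{tilting P}. In particular $\text{Hom}(P_i, s'_j[p]) = 0$ for $i = 0,1$ and $p \ne 0$, so $s'_0$ and $s'_1$ lie in the heart $\text{Perv}(\mathcal{D})$, and $\dim\text{Hom}(P_i, s'_j) = \delta_{ij}$ as needed.

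The substantive point is membership in $\mathcal{D}$, i.e. $\text{Hom}(s'_j, s_2[p]) = 0$ for all $p$. For $s'_0$ I would apply $\text{Hom}(-, s_2[p])$ to the same triangle. Using that $s_2$ is exceptional, so $\text{Hom}(s_2, s_2[p]) \cong k^{\delta_{p0}}$, and the computation $\text{Hom}(s_0, s_2[p]) \cong \text{Ext}^{2+p}(\mathcal{O}_E, \mathcal{O}_E(-2)) \cong k^{\delta_{p1}}$, the long exact sequence has its only possibly nonzero terms linked by the connecting map $\text{Hom}(s_2, s_2) \to \text{Hom}(s_0, s_2[1])$, $\phi \mapsto \phi[1]\circ\gamma$. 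This sends $\mathrm{id}_{s_2}$ to the extension class $\gamma \ne 0$ defining $s'_0$, hence is an isomorphism $k \to k$, which forces $\text{Hom}(s'_0, s_2[p]) = 0$ for every $p$. For $s'_1 = s_1$ I would first rewrite the problem by Serre duality with $\omega_Y = \mathcal{O}_Y(-L)$ and $\omega_Y|_E = \mathcal{O}_E(-2)$, obtaining $\text{Hom}(s_1, s_2[p]) \cong \text{Hom}(s_0, s_1[1-p])^*$; this reduces $s_1 \in \mathcal{D}$ to the single vanishing $R\text{Hom}(\mathcal{O}_E, \hat\Omega^1_E(1)) = 0$.

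The hardest step is this last vanishing, and I would extract it from the defining sequence $0 \to \hat\Omega^1_E(1) \to \mathcal{O}_E^{\oplus 2}\oplus\mathcal{O}_E(-1) \to \mathcal{O}_E(1) \to 0$ on $E \cong \mathbf{P}(1,1,2)$. Applying $R\text{Hom}(\mathcal{O}_E, -)$ and invoking $R\Gamma(E, \mathcal{O}_E(i)) = 0$ for $-3 \le i \le -1$, only the summand $\mathcal{O}_E^{\oplus 2}$ of the middle term and the term $\mathcal{O}_E(1)$ contribute, through $H^0(\mathcal{O}_E)^{\oplus 2}$ and $H^0(\mathcal{O}_E(1))$ in degree $0$. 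The comparison map between them is induced by the surjection $(x,y): \mathcal{O}_E^{\oplus 2} \to \mathcal{O}_E(1)$, and on global sections it is the isomorphism $k^{\oplus 2} \xrightarrow{\sim} H^0(\mathcal{O}_E(1)) = \langle x, y\rangle$; hence all contributions cancel and $R\text{Hom}(\mathcal{O}_E, \hat\Omega^1_E(1)) = 0$. The main obstacle is organizing these cohomology computations on the weighted projective plane and checking that the relevant boundary maps are genuine isomorphisms rather than merely dimension-matching: the extension-class argument for $s'_0$ is the clean part, whereas the orthogonality $s_1 \in \mathcal{D}$ carries the real content.
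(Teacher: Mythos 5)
Your proposal is correct and follows essentially the same route as the paper: prove $s'_0,s'_1\in\mathcal{D}$ from the defining triangles (for $s'_0$ via the nontrivial extension class making the connecting map $\text{Hom}(s_2,s_2)\to\text{Hom}(s_0,s_2[1])$ an isomorphism, for $s_1$ via the isomorphism induced by $(x,y)$, which the paper phrases in its Serre-dual form $\text{Hom}(\mathcal{O}_E(1),s_2[1])\xrightarrow{\sim}\text{Hom}(\mathcal{O}_E^{\oplus 2},s_2[1])$), then read off $R\text{Hom}(P_i,s'_j)\cong k^{\delta_{ij}}$ from Proposition~\ref{tilting P} to get both heart membership and the Hom values. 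Your explicit Lemma~\ref{yahiro}/dimension-vector bookkeeping for simplicity and exhaustion merely spells out what the paper compresses into ``then it follows that the $s'_j$ are simple,'' and your invoked vanishing range $R\Gamma(E,\mathcal{O}_E(i))=0$ for $-3\le i\le -1$ is exactly what is needed to absorb the normal-bundle correction terms $\mathcal{E}xt^1_Y(\mathcal{O}_E,\mathcal{F})\cong\mathcal{F}(-2)$ that distinguish $\text{Ext}_Y$ from cohomology on $E$.
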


\begin{proof}
We first prove that $s'_j \in \text{Perv}(\mathcal{D})$ for $j = 0,1$.
We have 
\[
\begin{split}
&\text{Hom}(s_2,s_2[p]) \cong \text{Hom}(\mathcal{O}_E,\mathcal{O}_E[p]) \cong 
\begin{cases} k &\text{ for } p = 0 \\ 
0 &\text{ for } p \ne 0. \end{cases} \\
&\text{Hom}(s_0,s_2[p]) \cong \text{Hom}(\mathcal{O}_E,\mathcal{O}_E[1-p])^* \cong 
\begin{cases} k &\text{ for } p = 1 \\ 
0 &\text{ for } p \ne 1. \end{cases}
\end{split}
\]
Therefore $\text{Hom}(s'_0,s_2[p]) \cong 0$ for all $p$, hence $s'_0 \in \text{Perv}(\mathcal{D})$.
We have
\[
\begin{split}
&\text{Hom}(\mathcal{O}_E(-1),s_2[p]) \cong \text{Hom}(s_2, \mathcal{O}_E(-1)[3-p])^* \cong 0 \\ 
&\text{Hom}(\mathcal{O}_E,s_2[p]) \cong \begin{cases} k &\text{ for } p = 1 \\ 
0 &\text{ for } p \ne 1. \end{cases} \\
&\text{Hom}(\mathcal{O}_E(1),s_2[p]) \cong \text{Hom}(\mathcal{O}_E,\mathcal{O}_E(1)[1-p])^* \cong 
\begin{cases} k^2 &\text{ for } p = 1 \\ 
0 &\text{ for } p \ne 1. \end{cases} 
\end{split}
\]
Moreover $\text{Hom}(\mathcal{O}_E(1),s_2[1]) \to \text{Hom}(\mathcal{O}_E^{\oplus},s_2[1])$ is an isomorphism.
Hence $R\text{Hom}(s_1,s_2) \cong 0$, and $s_1 \in \text{Perv}(\mathcal{D})$.

We prove that $\text{Hom}(P_i,s'_j) \cong k^{\delta_{ij}}$ for $i,j = 0,1$.
Then it follows that the $s'_j$ are simple.
By Proposition~\ref{tilting P}, we have $R\text{Hom}(P_0,s_2) \cong 0$.
Hence $R\text{Hom}(P_0,s'_0) \cong R\text{Hom}(P_0,s_0) \cong k$.
Since $R\text{Hom}(P_1,s_j) \cong 0$ for $j=0,2$, we have $R\text{Hom}(P_1,s'_0) \cong 0$.
We also have $R\text{Hom}(P_0,s_1) \cong 0$ and $R\text{Hom}(P_1,s_1) \cong k$. 
\end{proof}

\begin{Prop}
(1) $f_*P$ is not reflexive, but $f_*P_{\mathcal{D}}$ is reflexive.

(2) $f_*\mathcal{E}nd(P)$ and $f_*\mathcal{E}nd(P_{\mathcal{D}})$ are not Cohen-Macaulay.
\end{Prop}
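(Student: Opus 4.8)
The plan is to translate each assertion into a statement about local cohomology $H^p_{\mathfrak m}$ of the pushforward $R$-modules and to compute these on the exceptional divisor $E$. Throughout I use that $R^pf_*\mathcal{H}om(P_i,P_j)=0$ for $p>0$ (Proposition~\ref{tilting P}), so that the spectral sequence for the composite $\Gamma_{\mathfrak m}\circ Rf_*=R\Gamma_E$ degenerates to isomorphisms $H^p_{\mathfrak m}(f_*\mathcal{H}om(P_i,P_j))\cong H^p_E(\mathcal{H}om(P_i,P_j))$. Since $R$ is a normal Gorenstein local ring of dimension $3$, a torsion-free module $N$ of full support is reflexive iff $\operatorname{depth}_{\mathfrak m}N\ge 2$, i.e. iff $H^1_{\mathfrak m}(N)=0$ (one has $H^0_{\mathfrak m}(N)=0$ since $N$ is torsion-free), and is Cohen--Macaulay iff $\operatorname{depth}_{\mathfrak m}N=3$; moreover depth of a direct sum is the minimum of the depths, so for the endomorphism algebras it suffices to examine the summands $f_*\mathcal{H}om(P_i,P_j)$. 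I also record $\omega_Y=\mathcal{O}_Y(-L)=\mathcal{O}_Y(E)$, hence $\omega_Y|_E=\mathcal{O}_E(E)=\mathcal{O}_E(-2)$, and $L=P_2=\mathcal{O}_Y(-E)$ from the sequence $0\to P_2\to P_0\to\mathcal{O}_E\to 0$.

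For part (1), $f_*P_0=f_*\mathcal{O}_Y=R$ is free, hence reflexive. Pushing forward $0\to L\to\mathcal{O}_Y\to\mathcal{O}_E\to 0$ and using $R^1f_*L=0$ and $f_*\mathcal{O}_E=k$ gives $0\to f_*L\to R\to k\to 0$, so $f_*P_2\cong\mathfrak m$; the long exact sequence yields $H^1_{\mathfrak m}(\mathfrak m)\cong k\ne 0$, so $\mathfrak m$ is not reflexive and therefore $f_*P=R\oplus f_*P_1\oplus\mathfrak m$ is not reflexive. For the reflexivity of $f_*P_1$ I will show $H^1_E(P_1)=0$: using $H^{>0}(Y,P_1)=0$ the long exact sequence gives $H^1_E(P_1)\cong\operatorname{coker}(H^0(Y,P_1)\to H^0(Y\setminus E,P_1))$, and from $0\to P_1((n-1)E)\to P_1(nE)\to\bar P_1(-2n)\to 0$ (with $P_1|_E=\bar P_1$, $\mathcal{O}_E(nE)=\mathcal{O}_E(-2n)$) together with $H^0(E,\bar P_1(-2n))=0$ for $n\ge 1$ (immediate from $0\to\mathcal{O}_E(1-2n)\to\bar P_1(-2n)\to\mathcal{O}_E(1-2n)\to 0$) one gets $H^0(Y,P_1(nE))=H^0(Y,P_1)$ for all $n$, so no section acquires a pole along $E$ and $H^1_E(P_1)=0$. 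Thus $f_*P_{\mathcal D}=R\oplus f_*P_1$ is reflexive.

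For part (2), $f_*\mathcal{H}om(P_0,P_2)=f_*L\cong\mathfrak m$ is a direct summand of $f_*\mathcal{E}nd(P)$, and $\operatorname{depth}_{\mathfrak m}\mathfrak m=1<3$, so $f_*\mathcal{E}nd(P)$ is not Cohen--Macaulay. For $P_{\mathcal D}=P_0\oplus P_1$ it is enough to exhibit one non-maximal-Cohen--Macaulay summand, and I claim it is $f_*\mathcal{E}nd(P_1)$. Here I use Grothendieck duality for $f$ with $\omega_X\cong R$: since $Rf_*\mathcal{E}nd(P_1)=f_*\mathcal{E}nd(P_1)$ and $\mathcal{E}nd(P_1)^{\vee}\cong\mathcal{E}nd(P_1)$, it gives $\operatorname{Ext}^q_R(f_*\mathcal{E}nd(P_1),R)\cong R^qf_*(\mathcal{E}nd(P_1)\otimes\omega_Y)$, so by local duality $f_*\mathcal{E}nd(P_1)$ is maximal Cohen--Macaulay iff $R^qf_*(\mathcal{E}nd(P_1)\otimes\omega_Y)=0$ for all $q>0$. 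The point is that this fails for $q=1$.

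To see $R^1f_*(\mathcal{E}nd(P_1)\otimes\omega_Y)\ne 0$ I pass to the formal fibre: by the theorem on formal functions this module is $\varprojlim_n H^1(nE,(\mathcal{E}nd(P_1)\otimes\omega_Y)|_{nE})$, and the $\mathfrak m$-adic filtration of $\mathcal{O}_{nE}$ produces graded pieces $\mathcal{E}nd(\bar P_1)(2m-2)$ on $E$ for $0\le m\le n-1$. Applying $\mathcal{H}om(\bar P_1,-)$ to $0\to\mathcal{O}_E(1)\to\bar P_1\to\mathcal{O}_E(1)\to 0$ yields $0\to\mathcal{H}om(\bar P_1,\mathcal{O}_E(1))\to\mathcal{E}nd(\bar P_1)\to\mathcal{H}om(\bar P_1,\mathcal{O}_E(1))\to 0$, and the sequence (\ref{OQ}) identifies $\mathcal{H}om(\bar P_1,\mathcal{O}_E(1))$, up to the acyclic $\mathcal{O}_E$, with the ideal sheaf $\mathcal{I}_Q$ of the singular point $Q$ of $E$. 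On $E\cong\mathbf{P}(1,1,2)$ the sequence $0\to\mathcal{I}_Q(-2)\to\mathcal{O}_E(-2)\to\mathcal{O}_Q\to 0$ gives $H^1(E,\mathcal{I}_Q(-2))\cong k$, whence $H^1(E,\mathcal{E}nd(\bar P_1)(-2))\ne 0$, while $H^2(E,\mathcal{E}nd(\bar P_1)(2\ell))=0$ for all $\ell\ge 0$ (reducing to $H^2(\mathcal{O}_E(2\ell))=0$ and $H^2(\mathcal{I}_Q(2\ell))=0$). The vanishing of these $H^2$'s makes the transition maps of the inverse system surjective, so the limit surjects onto the nonzero bottom term; hence $f_*\mathcal{E}nd(P_1)$ is not maximal Cohen--Macaulay and $f_*\mathcal{E}nd(P_{\mathcal D})$ is not Cohen--Macaulay. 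The main obstacle is exactly this non-vanishing: the Cohen--Macaulay defect is created by the singular point $Q$ of the quadric cone $E$, entering through $\mathcal{E}xt^1(\mathcal{O}_E(1),\mathcal{O}_E(1))\cong\mathcal{O}_Q$, and the delicate step is to carry the nonzero class $H^1(E,\mathcal{E}nd(\bar P_1)(-2))$ through the $\mathfrak m$-adic filtration to the completion, which is precisely where the $H^2$-vanishing of the higher graded pieces is needed.
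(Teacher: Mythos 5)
Your proof is correct, and on the half that the paper itself calls ``more subtle'' --- the non-Cohen--Macaulayness of $f_*\mathcal{E}nd(P_{\mathcal{D}})$ --- it is the paper's own argument: Grothendieck duality together with $Rf_*\mathcal{E}nd(P_1) \cong f_*\mathcal{E}nd(P_1)$ reduces the claim to $R^1f_*(\mathcal{E}nd(P_1)\otimes\omega_Y) \ne 0$; the sequence (\ref{OQ}) twisted by $\mathcal{O}_Y(-L)$ shows that the singular point $Q$ forces $H^1(E,\mathcal{E}nd(\bar P_1)(-2)) \ne 0$; and the $H^2$-vanishing of the nonnegative twists $\mathcal{E}nd(\bar P_1)(2m)$ makes the transition maps in the formal-functions inverse system surjective, so the nonzero class survives in the limit. (One tiny elision on your side: to pass from $H^1(E,\mathcal{H}om(\bar P_1,\mathcal{O}_E(-1)))\ne 0$ to $H^1(E,\mathcal{E}nd(\bar P_1)(-2))\ne 0$ you need either $H^0$ of the quotient copy of $\mathcal{H}om(\bar P_1,\mathcal{O}_E(-1))$ to vanish, giving injectivity on $H^1$, or $H^2$ of the sub copy to vanish, giving surjectivity; both hold, and the paper records the latter.) Where you genuinely differ is in the routine claims, and there your uniform depth/local-cohomology framing buys something. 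For (1), the paper argues $f_*P_2 \subsetneq f_*P_2(E) \cong \mathcal{O}_X$ and $f_*P_1 \cong f_*P_1(E)$; you instead identify $f_*P_2 \cong \mathfrak m$ and prove $H^1_{\mathfrak m}(f_*P_1) \cong H^1_E(P_1) = 0$ by showing $H^0(Y,P_1(nE)) = H^0(Y,P_1)$ for \emph{all} $n$. Since the reflexive hull of $f_*P_1$ is $\varinjlim_n f_*P_1(nE)$, stabilization at every twist is exactly what reflexivity requires, so your induction via $H^0(E,\bar P_1(-2n)) = 0$ fills in a step that the paper compresses to the single case $n=1$. For the first claim of (2), the paper exhibits $R^2f_*(\mathcal{H}om(P_2,P_0)\otimes\mathcal{O}_Y(K_Y)) = R^2f_*\mathcal{O}_Y(-2L) \ne 0$, whereas you observe that the direct summand $f_*\mathcal{H}om(P_0,P_2) = f_*L \cong \mathfrak m$ has depth $1$; both are one-line arguments, yours recycling the part (1) computation and avoiding a second appeal to duality, the paper's staying inside the single criterion $R^{>0}f_*(\,\cdot\,\otimes\omega_Y) = 0$ that it uses throughout the section.
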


\begin{proof}
(1) We have $f_*P_2 \subsetneq f_*P_2(E) \cong f_*P_0 \cong \mathcal{O}_X$, hence $f_*P_2$ is not reflexive.
On the other hand, $f_*P_1 \cong f_*P_1(E)$, hence $f_*P_1$ is reflexive.

(2) We have $R^2f_*\mathcal{E}nd(P)(K_Y) \ne 0$, because 
$\mathcal{H}om(P_2,P_0) \otimes \mathcal{O}_Y(K_Y) \cong \mathcal{O}_Y(-2L)$.
Hence $f_*\mathcal{E}nd(P)$ is not Cohen-Macaulay.

The second statement is more subtle.
We consider the exact sequence (\ref{OQ}) tensored with $\mathcal{O}_Y(-L)$.
Since $R\Gamma(E,\mathcal{O}_E(-2)) \cong 0$, 
we have $H^1(E,\text{Ker}(\mathcal{O}_E(-2) \to \mathcal{O}_Q(-2))) \ne 0$ and 
$H^2(E,\text{Ker}(\mathcal{O}_E(-2) \to \mathcal{O}_Q(-2))) = 0$. 
Then we deduce that $H^1(E, \mathcal{H}om(\bar P_1,\mathcal{O}_E(-1))) \ne 0$
and $H^2(E, \mathcal{H}om(\bar P_1,\mathcal{O}_E(-1))) = 0$.
Hence 
\[
H^1(E, \mathcal{H}om(\bar P_1,\bar P_1(-2))) \ne 0, H^2(E, \mathcal{H}om(\bar P_1,\bar P_1(-2))) = 0
\]
while $H^p(E, \mathcal{H}om(\bar P_1,\bar P_1(2m))) \cong 0$ for 
$p > 0$ and $m \ge 0$.
Therefore we have
\[
R^1f_*\mathcal{H}om(P_1,P_1)(K_Y) \ne 0
\]
hence $f_*\mathcal{E}nd(P_{\mathcal{D}})$ is not Cohen-Macaulay.
 \end{proof}

Therefore $f_*\mathcal{E}nd(P_{\mathcal{D}})$ is not homologically homogeneous as already proved 
in \cite{VdBergh} Example A.1. 

\section{Non-commutative deformations}

We recall the theory of multi-pointed non-commutative deformations of simple collections developed in \cite{NC}.

\begin{Defn}
The base ring is a direct product $k^r$ of the base field $k$ for a positive integer $r$.
We deform a set of objects $\{F_i\}_{i=1}^r$ in a $k$-linear abelian category.
It is said to be a {\em simple collection} if $\text{End}(F) \cong k^r$ with $F = \bigoplus_{i=1}^r F_i$, i.e., 
$\text{Hom}(F_i,F_j) \cong k^{\delta_{ij}}$.
\end{Defn}

The simple collections are defined in \cite{NC} as generalizations of simple sheaves.
If $r=1$ and $F$ is a sheaf, then a simple collection is nothing but a simple sheaf.  
Simple sheaves behave well under deformations.
For example, a stable sheaf is a simple sheaf.
A set of simple objects in a $k$-linear abelian category is automatically a simple collection.

\begin{Expl}(\cite{Bridgeland})
Let $f: Y \to X = \text{Spec}(R)$ be a projective birational morphism from a smooth $3$-fold 
whose exceptional locus $C$ is a 
smooth rational curve with normal bundle $N_{C/Y} \cong \mathcal{O}_C(-1)^{\oplus 2}$.
$\{\mathcal{O}_C, \mathcal{O}_C(-1)[1]\}$ is the set of simple objects above the singular point $x \in X$ 
in the category of perverse coherent sheaves ${}^{-1}\text{Perv}(Y/X)$.
For a point $y \in Y$ above $x$, there is an exact sequence 
\[
0 \to \mathcal{O}_C \to \mathcal{O}_y \to \mathcal{O}_C(-1)[1] \to 0
\]
in ${}^{-1}\text{Perv}(Y/X)$.
$\mathcal{O}_y$ becomes a stable object under a suitable Bridgeland stability condition determined by the 
values of the central charge on the set $\{\mathcal{O}_C, \mathcal{O}_C(-1)[1]\}$, 
and $Y$ is the corresponding moduli space.
If we take a different Bridgeland stability condition, then we obtain the flop of $Y$.
We refer to \cite{Toda} for related topics.
\end{Expl}

\begin{Defn}
The category of $r$-pointed Artin algebras $(\text{Art}_r)$ consists of $k^r$-algebras $R$ with augmentations:
\[
\begin{CD}
k^r @>e>> R @>p>> k^r
\end{CD}
\]
with $p \circ e = \text{Id}$, 
which are finite dimensional as $k$-vector spaces and such that the ideals $M = \text{Ker}(p: R \to k^r)$ 
are nilpotent. 

A (multi-pointed) {\em non-commutative (NC) deformation} of a simple collection 
$F = \bigoplus_{i=1}^r F_i$ over a parameter algebra $R \in (\text{Art}_r)$ 
is a pair $(F_R,\phi)$ consisting of an object $F_R$ with a left $R$-module structure and an
isomorphism $\phi: k^r \otimes_R F_R \to F$ such that $F_R$ is flat over $R$.

Let $(\hat{\text{Art}}_r)$ be the category of pro-objects $\hat R$ of $(\text{Art}_r)$;
$\hat R$ is a $k^r$-algebra with augmentation 
$k^r \to \hat R \to k^r$ such that $R_m = \hat R/M^{m+1} \in (\text{Art}_r)$ for all $m \ge 0$
and $\bigcap_{m > 0} M^m = 0$, 
where $M = \text{Ker}(p: R \to k^r)$.
We denote $\hat R = \varprojlim R_m$.

A {\em formal NC deformation} of $F$ over $\hat R$ is a pair $(\{F_{R_m}\}_{m \ge 0}, \{\phi_m\}_{m \ge 0})$ 
consisting of a series of NC deformations $F_{R_m}$ 
of $F$ over $R_m = \hat R/M^{m+1}$ with 
isomorphisms $\phi_{m+1}: R_m \otimes_{R_{m+1}} F_{R_{m+1}} \to F_{R_m}$ and $\phi_0: F_{R_0} \to F$.
Any NC deformation is considered to be a special case of a formal NC deformation whose parameter algebra is 
finite dimensional as a $k$-vector space.

A formal NC deformation $(\{F_{R_m}\}, \{\phi_m\})$ 
of $F$ is said to be {\em versal} if the following conditions are satisfied:

\begin{enumerate}
\item For any NC deformation $(F_R,\phi)$ of $F$, there are an integer $m$, a $k^r$-algebra homomorphism
$g: R_m \to R$ and an isomorphism $F_R \cong R \otimes_{R_m} F_{R_m}$
which is compatible with $\{\phi_m\}$ and $\phi$.

\item The induced homomorphism $R_m/M_{R_m}^2 \to R/M^2$ is uniquely determined by $(F_R,\phi)$.

\end{enumerate}
\end{Defn}

\begin{Defn}
An {\em iterated non-trivial extensions} of $F = \bigoplus_{i=1}^r F_i$ 
is a sequence of objects $\{G^n\}_{n = 0}^N$ with $G^n = \bigoplus_{i=1}^r G^n_i$
such that 

\begin{enumerate}
\item $G^0_i = F_i$ for all $i$.

\item For each $n < N$, there are $i_1 = i_1(n)$ and $i_2 = i_2(n)$ 
such that $G^{n+1}_i = G^n_i$ for $i \ne i_1$ and that 
there is a non-trivial extension
\[
0 \to F_{i_2} \to G^{n+1}_{i_1} \to G^n_{i_1} \to 0.  
\]
\end{enumerate}
\end{Defn}

\begin{Thm}
Let $\{G^n\}_{n = 0}^N$ be an iterated non-trivial extensions of a simple collection $F$, and let 
$R = \text{End}(G^N)$.
Then $G^N$ is an NC deformation of $F$ over $R$.
\end{Thm}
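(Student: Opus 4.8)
The plan is to exhibit $R=\mathrm{End}(G^N)$ as an object of $(\mathrm{Art}_r)$, with $G^N$ a flat left $R$-module reducing to $F$. First I would set up the $k^r$-algebra structure: the projections $e_1,\dots,e_r$ of $G^N=\bigoplus_i G^N_i$ onto its summands are orthogonal idempotents summing to $1$, and the map $e\colon k^r\to R$ sending the standard idempotents to the $e_i$ is the structure map, while $G^N$ is tautologically a left $R$-module via the action of endomorphisms. Tracing the construction, each $G^N_i$ carries a finite filtration whose subquotients lie in $\{F_1,\dots,F_r\}$ with top quotient $F_i$, so $G^N$ has finite length. Combined with $\dim\mathrm{Hom}(F_a,F_b)=\delta_{ab}$ and a standard filtration argument, this shows $R$ is finite dimensional over $k$, hence Artinian with nilpotent radical.

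Next I would produce the augmentation $p$ and check $R/\mathrm{rad}(R)\cong k^r$. The point is that each $G^N_i$ is indecomposable with local endomorphism ring of residue field $k$; I would prove this by induction along the iterated extension, using non-triviality of each step together with $\mathrm{Hom}(F_a,F_b)=k\delta_{ab}$ to rule out idempotents other than $0,1$. The off-diagonal blocks $\mathrm{Hom}(G^N_i,G^N_j)$ for $i\neq j$ consist of non-isomorphisms and hence lie in the radical, so $R/\mathrm{rad}(R)\cong\prod_i\mathrm{End}(G^N_i)/\mathrm{rad}\cong k^r$. Taking $p$ to be this projection and $M=\mathrm{rad}(R)$ places $R$ in $(\mathrm{Art}_r)$. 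For the reduction I would introduce the natural surjection $q\colon G^N\to F$ assembled from the quotient maps $G^N_i\twoheadrightarrow\cdots\twoheadrightarrow F_i$ and show $\ker q=MG^N$, whence $k^r\otimes_R G^N=G^N/MG^N\cong F$; the inclusion $MG^N\subseteq\ker q$ holds because $M$ strictly lowers the filtration level, and the reverse uses that each bottom piece $F_{i_2}$ is the image of a generator under an element of $M$, precisely because the defining extension is non-trivial.

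The heart of the proof is flatness, which I expect to be the main obstacle. I would reduce it to the vanishing of $\mathrm{Tor}_1^R(k^r,G^N)$, equivalently to showing that the multiplication maps $M^j/M^{j+1}\otimes_{k^r}F\to M^jG^N/M^{j+1}G^N$ are isomorphisms for all $j$, i.e.\ that $\mathrm{gr}_M(R)\otimes_{k^r}F\to\mathrm{gr}_M(G^N)$ is an isomorphism. I emphasize that flatness is genuinely weaker than $G^N\cong R\otimes_{k^r}F$: the latter fails already for a non-split iterated self-extension, so the argument must compare associated graded objects rather than the objects themselves. My plan is to argue by induction on $N$, showing that the step from $G^n$ to $G^{n+1}$ adds exactly one new basis vector to $\mathrm{gr}_M(R)$ and one new graded subquotient $F_{i_2}$ to $\mathrm{gr}_M(G^N)$, matched under the comparison map.

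The delicate point, and the reason non-triviality is indispensable, is controlling how $\mathrm{End}(G^{n+1})$ enlarges $\mathrm{End}(G^n)$: the enlargement need not be visible as a one-dimensional kernel in any naive quotient map between the two endomorphism rings, so rather than comparing $R^{n+1}$ and $R^n$ directly I would work entirely inside the $M$-adic graded object of $G^N$ and show, step by step, that the new subobject $F_{i_2}$ contributes a free generator to $\mathrm{gr}_M(G^N)$ exactly matching a new generator of $\mathrm{gr}_M(R)$. Non-triviality of the extension is what forbids a collapse in the denominators $M^{j+1}G^N$, and hence is exactly the hypothesis that makes the comparison map bijective rather than merely surjective.
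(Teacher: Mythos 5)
A preliminary remark: this paper never proves the statement in question --- it is recalled from \cite{NC} and immediately put to use --- so there is no proof of record here to compare yours against; what follows measures your sketch against what any correct argument must accomplish. The first half of your plan is sound and essentially forced. The fact underlying all of it, which you only gesture at, is the key lemma that $\dim \mathrm{Hom}(G^n_i,F_j)=\delta_{ij}$ for all $n$, spanned for $i=j$ by the canonical surjection $q_i\colon G^n_i\to F_i$; this does follow by induction up the tower, because non-splitness of $0\to F_{i_2}\to G^{n+1}_{i_1}\to G^n_{i_1}\to 0$ says exactly that the connecting map $\mathrm{Hom}(F_{i_2},F_{i_2})\to \mathrm{Ext}^1(G^n_{i_1},F_{i_2})$ is injective. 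From this lemma one gets, as you outline, finite dimensionality of $R$, indecomposability of each $G^N_i$, $R/\mathrm{rad}(R)\cong k^r$ with the augmentation given by the action on the top quotients, and the inclusion $MG^N\subseteq \ker q$. Your reduction of flatness to the graded comparison $\mathrm{gr}_M(R)\otimes_{k^r}F\to \mathrm{gr}_M(G^N)$ is also a legitimate move.

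The genuine gap is in the second half, exactly at the point you flag as delicate, and the justification you offer there (``non-triviality \dots forbids a collapse'') is not merely unproved: it is false. To get $\ker q\subseteq MG^N$ and the graded matching, you must produce, for \emph{every} step $n<N$, an element of $M=\mathrm{rad}(R)$ whose image covers the subquotient $F_{i_2(n)}$ of $\ker q$. For the last step this is easy (compose $G^N\to G^N_{i_2}\to F_{i_2}\hookrightarrow G^N_{i_1}$); for earlier steps it amounts to lifting radical endomorphisms along the tower, and the obstruction is an $\mathrm{Ext}^1$-class that non-splitness of the individual extensions does not kill. Concretely, work in finite-dimensional $k\langle x,y\rangle$-modules, $r=1$, $F=k$ with $x,y$ acting by zero. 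Let $G^1=\langle u,w\rangle$ with $xu=w$ and all other products zero, and $G^2=\langle u,w,z\rangle$ with $xu=w$, $yu=z$, $yw=z$ and all other products zero. Both steps are non-split, so $\{F,G^1,G^2\}$ is an iterated non-trivial extension in the sense of the Definition as literally stated, and it satisfies everything in your first half ($\mathrm{Hom}(G^2,F)=k\cdot q$, $G^2$ indecomposable, etc.). But a direct computation of matrices commuting with the $x$- and $y$-actions gives $\mathrm{End}(G^2)=k[g]/(g^2)$ with $g(u)=z$, $g(w)=g(z)=0$: the radical endomorphism $u\mapsto w$ of $G^1$ does not lift to $G^2$. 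Hence $\dim R=2\ne r+N=3$, $MG^2=\langle z\rangle\subsetneq\langle w,z\rangle=\ker q$, $k\otimes_R G^2\cong G^1\not\cong F$, and $G^2\cong R\oplus k$ is not flat over $R$. Every assertion in your second half fails for this tower; in particular the step from $G^1$ to $G^2$ adds no new generator to $\mathrm{gr}_M(R)$.

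The conclusion to draw is that your outline cannot be completed from the hypotheses as stated --- indeed the example shows the conclusion of the theorem itself fails under the literal reading of the Definition, so no cleverer argument can evade the problem. Non-splitness of each step must be supplemented by the requirement that each extension class be compatible with the module structure already constructed, i.e.\ that $G^{n+1}\to G^n$ is an extension \emph{of deformations}, induced by a small extension of parameter algebras; that is the hypothesis your induction would have to carry, and it is automatic for the towers this paper actually uses (in the proofs of Theorem~\ref{recover} and Theorem~\ref{defBvdB} the iterated extensions arise from quotients of a module with $\mathrm{Hom}(\bar P,s_j[1])=0$-type vanishing), and it is presumably how the definition in \cite{NC} is to be understood.
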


The point in the above theorem is that $\dim R = r+N$ as a $k$-vector space.
A versal deformation can be constructed by iterated universal extensions:

\begin{Thm}
Let $F = \bigoplus_{i=1}^r F_i$ be a simple collection.
Assume that $\dim \text{Ext}^1(F,F) < \infty$.
Define a sequence of objects $F^{(n)} = \bigoplus_{i=1}^r F^{(n)}_i$ by universal extensions
\[
0 \to \bigoplus_{j=1}^r \text{Ext}^1(F^{(n)}_i,F_j)^* \otimes F_j \to F^{(n+1)}_i \to F^{(n)}_i \to 0.
\]
Then the $F^{(n)}$ can be obtained by iterated non-trivial extensions of $F$, and 
the inverse limit $\varprojlim F^{(n)}$ is a versal NC deformation of $F$.
\end{Thm}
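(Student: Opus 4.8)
The plan is to prove the two assertions in turn, and the key input throughout is the defining property of the universal extension: that its class is the sum of the identities in $\bigoplus_j \text{Ext}^1(F^{(n)}_i,F_j)\otimes\text{Ext}^1(F^{(n)}_i,F_j)^*$. To realize $F^{(n+1)}$ as an iterated non-trivial extension, I would fix an index $i$, write $Q:=F^{(n)}_i$, and refine the single universal extension
\[
0 \to \textstyle\bigoplus_{j} \text{Ext}^1(Q,F_j)^* \otimes F_j \to F^{(n+1)}_i \to Q \to 0
\]
into a chain of extensions by one copy of a single $F_j$ at a time. Concretely, I would choose an ordered basis $\zeta_1,\dots,\zeta_D$ of $\bigoplus_j\text{Ext}^1(Q,F_j)$ with $\zeta_l\in\text{Ext}^1(Q,F_{j(l)})$, let the dual basis cut out a complete flag $0=S_0\subset\cdots\subset S_D=S$ of the submodule $S$, and take the tower of quotients $H^{(l)}:=F^{(n+1)}_i/S_{D-l}$, so that $H^{(0)}=Q$, $H^{(D)}=F^{(n+1)}_i$, and each consecutive pair sits in $0\to F_{j(l')}\to H^{(l)}\to H^{(l-1)}\to 0$ for the appropriate $l'$. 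The crux is non-triviality of each step, which I would extract from the long exact sequence of $\text{Hom}(-,F_{j(l')})$ applied to $0\to S/S_{D-l+1}\to H^{(l-1)}\to Q\to 0$: the $l$-th extension class is the pullback of $\zeta_{l'}$ along $H^{(l-1)}\to Q$, and the kernel of that pullback equals the image of the connecting map, namely the span of the already-used classes $\zeta_{l''}$ of the same type $j$. Since the $\zeta$'s are linearly independent, $\zeta_{l'}$ avoids this span, so the step is non-split. Running this for each $i$ and each $n$ exhibits $F^{(n)}$ as an iterated non-trivial extension of $F$, whence the preceding theorem gives that each $F^{(n)}$ is an NC deformation of $F$ over $R_n:=\text{End}(F^{(n)})$.

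Next I would assemble these into a formal deformation. The surjections $F^{(n+1)}\to F^{(n)}$ should yield compatible surjections $R_{n+1}\to R_n$ and isomorphisms $R_n\otimes_{R_{n+1}}F^{(n+1)}\cong F^{(n)}$; I would set $\hat R:=\varprojlim R_n$ with augmentation $\hat R\to k^r$ given by the action on $F=F^{(0)}$, so that flatness comes from the previous theorem and $k^r\otimes_{\hat R}\hat F\cong F$ by construction. The real content here is to identify $R_n$ with $\hat R/M^{n+1}$ and to pin down the tangent level $M/M^2\cong\text{Ext}^1(F,F)^*$ as a $k^r$-bimodule. This is exactly where the universal property enters: applying $\text{Hom}(-,F_m)$ to the order-one extension shows that the connecting map $\text{Hom}(\bigoplus_j\text{Ext}^1(F_i,F_j)^*\otimes F_j,\,F_m)\to\text{Ext}^1(F_i,F_m)$ is the canonical isomorphism, so $F^{(1)}$ carries the full first-order deformation and $R_1/M^2$ is dual to $\text{Ext}^1(F,F)$. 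This identification also yields condition (2) of versality, since the Kodaira--Spencer class of any deformation $F_R$ both determines and is determined by the induced map $M/M^2\to\mathfrak m/\mathfrak m^2$.

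Finally I would verify condition (1): any NC deformation $(F_R,\phi)$ over $R\in(\text{Art}_r)$, with $\mathfrak m^{N+1}=0$, is induced from $\hat R$. I would construct the classifying homomorphism by induction on the nilpotency order, building compatible $g_m\colon R_m\to R/\mathfrak m^{m+1}$ together with isomorphisms $F_R/\mathfrak m^{m+1}\cong(R/\mathfrak m^{m+1})\otimes_{R_m}F^{(m)}$. In the inductive step one has the square-zero extension $R/\mathfrak m^{m+2}\to R/\mathfrak m^{m+1}$ with kernel $I=\mathfrak m^{m+1}/\mathfrak m^{m+2}$, and the lift $F_R/\mathfrak m^{m+2}$ is classified by an extension class valued in $\text{Ext}^1(F,F)\otimes_{k^r}I$; because $F^{(m+1)}$ is obtained from $F^{(m)}$ by extending along the \emph{entire} $\text{Ext}^1$, this class factors through the universal one and produces $g_{m+1}$. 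Passing to the limit gives $g\colon\hat R\to R$ and the isomorphism $F_R\cong R\otimes_{\hat R}\hat F$. The step I expect to be the main obstacle is precisely this inductive lifting: one must track the $k^r$-bimodule (quiver) structure on the $\text{Ext}$-groups and confirm that the universal extension class surjects onto the deformation's extension class with the correct variance, in contrast to the comparatively formal verifications of flatness and of the tangent-space identification.
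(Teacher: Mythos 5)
A preliminary remark: the paper itself contains no proof of this theorem --- Section 5 explicitly \emph{recalls} it from \cite{NC} --- so what follows measures your proposal against the mechanism that the paper visibly relies on when it \emph{uses} this theorem (the proof of Theorem~\ref{recover}), rather than against an in-text argument. Your first half is correct and essentially complete: refining the universal extension by the flag $S_0\subset\cdots\subset S_D$ dual to a basis $\zeta_1,\dots,\zeta_D$ of $\bigoplus_j\text{Ext}^1(F^{(n)}_i,F_j)$, identifying the class of each step $0\to F_{j(l')}\to H^{(l)}\to H^{(l-1)}\to 0$ with the pullback $\pi^*\zeta_{l'}$, and computing $\ker\pi^*$ as the image of the connecting map (by simplicity of the collection, the span of the already-used $\zeta_{l''}$ with $j(l'')=j(l')$) is a sound proof of the first assertion.

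The versality half has a genuine gap, and it sits exactly where you predicted. Everything rests on a property of the universal extension that you never isolate or prove: applying $\text{Hom}(-,F_m)$ to it, the connecting map $\text{Hom}\bigl(\bigoplus_j\text{Ext}^1(F^{(n)}_i,F_j)^*\otimes F_j,\,F_m\bigr)\to\text{Ext}^1(F^{(n)}_i,F_m)$ is an \emph{isomorphism} (it is $\pm$ the identity, since the extension class is the identity), hence the pullback $\text{Ext}^1(F^{(n)}_i,F_m)\to\text{Ext}^1(F^{(n+1)}_i,F_m)$ is zero. Injectivity --- which your first half proves in disguise --- is what forces every morphism $F^{(n+1)}_i\to F_m$ (and then, inductively along the filtration of $F^{(n)}_{i'}$, every morphism $F^{(n+1)}_i\to F^{(n)}_{i'}$) to kill the kernel $S^{(n)}_i$; without this, your assertion that the tower ``should yield'' surjections $R_{n+1}\to R_n$ and the identification $R_n\cong\hat R/M^{n+1}$ is unsupported, because maps $F_j\to F^{(n)}_i$ exist in abundance (e.g.\ the inclusions of the extension kernels), so preservation of $S^{(n)}$ by endomorphisms is not automatic. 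Surjectivity --- which you never use --- is the domination property: every extension $0\to F_j\to G\to F^{(n)}_i\to 0$ pulls back trivially to $F^{(n+1)}_i$, so $F^{(n+1)}_i\to F^{(n)}_i$ lifts to $G$. This is precisely the engine behind condition (1) of versality (it plays the same role as the pair $\dim\text{Hom}(\bar P,s_j)=1$, $\text{Hom}(\bar P,s_j[1])=0$ in the paper's proof of Theorem~\ref{recover}): one uses it, step by step along the $M$-adic filtration of a given deformation $F_R$, to build an actual morphism of objects $F^{(m)}\to F_R$ compatible with the augmentations, and only from such a morphism does one extract the $k^r$-algebra homomorphism $g$ and the base-change isomorphism $F_R\cong R\otimes_{R_m}F^{(m)}$. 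Your final paragraph instead argues at the level of extension classes (``the class factors through the universal one and produces $g_{m+1}$''); that comparison of classes does not by itself produce an algebra homomorphism or the isomorphism of deformations (one may also need to correct $g_m$ by a tangent-level automorphism at each stage), so the second assertion of the theorem remains unproved as written.
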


We note that the above exact sequences correspond to distinguished triangles
\[
F^{(n+1)}_i \to F^{(n)}_i \to \bigoplus_{j=1}^r \text{Hom}(F^{(n)}_i,F_j[1])^* \otimes F_j[1] \to F^{(n+1)}_i[1].
\]

\section{Versal deformations}

The reduced tilting generator $\bar P$ is recovered as a versal non-commutative deformation of the simple objects
in the category of perverse coherent sheaves $\text{Perv}(X/Y)$: 

\begin{Thm}\label{recover}
Let $f: Y \to X$ be a projective morphism between noetherian schemes,  
and let $P$ be a tilting generator for $f$.
Assume that $X = \text{Spec}(R)$ for a complete local ring $R$ whose residue field is isomorphic to the base field.
Let $\{P_i\}_{i=1}^m$ and $\{s_j\}_{j=1}^m$ be the sets of indecomposable projective objects and simple objects
in $\text{Perv}(Y/X)$.
Set $\bar P = \bigoplus_{i=1}^m P_i$ and $\bar A = f_*\mathcal{E}nd(\bar P)$.
Then $\bar P$ is the versal deformation of the simple collection $\bigoplus_{j=1}^m s_j$ with the
parameter algebra $\bar A$.
\end{Thm}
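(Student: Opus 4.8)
The plan is to transport the entire problem to the algebraic side via the Bondal--Rickard equivalence and to recognize the asserted versal deformation as the regular bimodule over $\bar A$. Since $\bar P = \bigoplus_{i=1}^m P_i$ is itself a (reduced) tilting generator defining the same category $\text{Perv}(Y/X)$, the functor $\bar\Phi = R\text{Hom}(\bar P, \bullet)$ gives an exact equivalence $D^b(\text{coh}(Y)) \to D^b(\text{mod-}\bar A)$ restricting to an equivalence of abelian categories $\text{Perv}(Y/X) \xrightarrow{\sim} (\text{mod-}\bar A)$. Under this equivalence $\bar P$ corresponds to the regular right module $\bar A$, each $P_i$ to an indecomposable projective right module, and each simple $s_j$ to the simple right module $S_j$, with $\bigoplus_j s_j$ corresponding to $\bar A/J \cong k^m$, where $J$ is the Jacobson radical. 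Because $\bar\Phi$ is exact and fully faithful it preserves all $\text{Hom}$ and $\text{Ext}$ groups, hence preserves non-commutative deformations and their versality, so it suffices to prove the statement inside $(\text{mod-}\bar A)$.

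On the algebraic side I would invoke the universal-extension construction of the versal deformation (the second theorem of \S 5). Its hypothesis $\dim\text{Ext}^1(\bigoplus_j S_j, \bigoplus_j S_j) < \infty$ holds: each $S_j$ is finite-dimensional, $\bar A$ is noetherian and semiperfect so the first syzygy of $S_i$ is finitely generated, and $\text{Hom}(Q, S_j)$ is finite-dimensional for any finitely generated projective $Q$. Thus the versal deformation is $\varprojlim_n F^{(n)}$ with $F^{(n)} = \bigoplus_i F^{(n)}_i$ built from the universal extensions $0 \to \bigoplus_j \text{Ext}^1(F^{(n)}_i, S_j)^* \otimes S_j \to F^{(n+1)}_i \to F^{(n)}_i \to 0$, and its parameter algebra is $\varprojlim_n \text{End}(F^{(n)})$. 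The goal is to identify both with $\bar A$.

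The heart of the argument is the claim $F^{(n)}_i \cong P_i/J^{n+1}P_i$, proved by induction on $n$. Granting the inductive hypothesis, apply $\text{Hom}(\bullet, S_j)$ to $0 \to J^{n+1}P_i \to P_i \to P_i/J^{n+1}P_i \to 0$; since $P_i$ is projective and any map $P_i \to S_j$ kills $J^{n+1}P_i \subseteq JP_i$, the connecting map yields $\text{Ext}^1(P_i/J^{n+1}P_i, S_j) \cong \text{Hom}(J^{n+1}P_i/J^{n+2}P_i, S_j)$, so the universal-extension kernel $\bigoplus_j \text{Ext}^1(F^{(n)}_i, S_j)^* \otimes S_j$ is exactly the radical layer $J^{n+1}P_i/J^{n+2}P_i$. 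Universality of the extension then forces $F^{(n+1)}_i \cong P_i/J^{n+2}P_i$, completing the induction. As $P_i$ is finitely generated over the complete algebra $\bar A$ it is $J$-adically complete, so $\varprojlim_n F^{(n)}_i = P_i$ and $\varprojlim_n F^{(n)} = \bigoplus_i P_i = \bar A$.

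Finally I would compute the parameter algebra: each $\text{End}(F^{(n)}) = \text{End}_{\bar A}(\bar A/J^{n+1})$ equals $\bar A/J^{n+1}$ (endomorphisms of a regular right module are left multiplications), whence $\varprojlim_n \text{End}(F^{(n)}) = \bar A$, which under the equivalence is $\text{Hom}_{\text{Perv}}(\bar P, \bar P) = f_*\mathcal{E}nd(\bar P) = \bar A$. Transporting $\varprojlim F^{(n)} = \bar A$ back through $\Psi$ returns $\bar P$, so $\bar P$ is the versal deformation of $\bigoplus_j s_j$ with parameter algebra $\bar A$. I expect the main obstacle to be the inductive identification $F^{(n)}_i \cong P_i/J^{n+1}P_i$ — in particular pinning down that the universal extension adjoins precisely the next radical layer and introduces no spurious summands — together with checking that the $\text{Ext}$-finiteness and completeness hypotheses needed to pass to the inverse limit are all in force.
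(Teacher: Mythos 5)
Your proposal is correct, and it reaches the theorem by a genuinely different route than the paper. The paper works directly in $\text{Perv}(Y/X)$: it takes the $\mathfrak m$-adic tower $\bar P = \varprojlim \bar P/\mathfrak m^n\bar P$, which is a tower of iterated extensions of the $s_j$ by Theorem~\ref{BR}, and then verifies versality abstractly from just two cohomological facts, $\dim\text{Hom}(\bar P,s_j)=1$ (forcing every extension in the tower to be non-trivial, since $\bar P$ surjects onto each stage $F^m$) and $\text{Hom}(\bar P,s_j[1])=0$ (so that for any non-trivial extension $0 \to s_j \to G \to F^m \to 0$ the map $\bar P \to F^m$ lifts to $G$, i.e.\ $\bar P$ dominates every further non-trivial extension). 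You instead pass to $(\text{mod-}\bar A)$ and compute the canonical universal-extension tower of \S 5, identifying it inductively with the radical filtration $F^{(n)}_i \cong P_i/J^{n+1}P_i$; the step you flag as the main obstacle does close: projectivity of $P_i$ gives $\text{Ext}^1(P_i/J^{n+1}P_i,S_j) \cong \text{Hom}(J^{n+1}P_i/J^{n+2}P_i,S_j)$ via the connecting map of $0 \to J^{n+1}P_i \to P_i \to P_i/J^{n+1}P_i \to 0$, and comparing connecting maps shows that $0 \to J^{n+1}P_i/J^{n+2}P_i \to P_i/J^{n+2}P_i \to P_i/J^{n+1}P_i \to 0$ realizes the universal class by pushout, so uniqueness of the universal extension forces the identification. (Note also that the two towers are cofinal: $\mathfrak m\bar A \subseteq J$ and $J^N \subseteq \mathfrak m\bar A$ for some $N$, so your $J$-adic limit and the paper's $\mathfrak m$-adic limit agree.) Your approach buys an explicit finite-level description of the versal deformation and an explicit computation of the parameter algebra as $\varprojlim \text{End}(\bar A/J^{n+1}) = \varprojlim \bar A/J^{n+1} = \bar A$ — a point the paper leaves implicit — and it makes visible exactly where reducedness of $\bar P$ enters, namely in $\bar A/J \cong k^m$, so that each simple occurs exactly once and the $s_j$ form a simple collection matched one-to-one with the projectives. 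The paper's approach buys economy and reusability: it never computes a universal extension, works with any tower, and its two-property argument (projective cover plus vanishing of $\text{Ext}^1$ against the simples) is exactly the form recycled in the proof of Theorem~\ref{defBvdB}, where the deformed object $Q[1]$ is not projective and only the analogous two properties survive.
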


\begin{proof}
Since $X$ is the spectrum of a complete local ring, we have $\bar P = \varprojlim \bar P/\mathfrak m^n\bar P$ 
for the maximal ideal $\mathfrak m \subset R$.
Since $\bar A/\mathfrak m^n \bar A$ is finite dimensional as a vector space over the base field, 
we deduce that $\bar P/\mathfrak m^n\bar P$ is obtained by iterated (trivial or non-trivial) extensions of the $s_j$
in $\text{Perv}(Y/X)$ by Theorem \ref{BR}.
By construction, we have 
\[
\dim \text{Hom}(\bar P,s_j) = 1, \text{Hom}(\bar P,s_j[1]) = 0
\]
for all $j$.
We will prove our assertion using these two cohomological properties.

Let 
\begin{equation}\label{extensions}
\dots \to F^{m+1} \to F^m \to \dots \to F^1 \to F^0 = \bigoplus_{j=1}^n s_j
\end{equation}
be a sequence of 
surjective morphisms in $\text{Perv}(Y/X)$ corresponding to the iterated extensions toward $\bar P$
which passes through the quotients $\bar P/\mathfrak m^n\bar P$; we have exact sequences
\[
0 \to s_{j(m)} \to F^{m+1} \to F^m \to 0
\]
for each $m$, where $j(m)$ depends on $m$.

The first property $\dim \text{Hom}(\bar P,s_j) = 1$ implies that all the extensions are non-trivial.
Indeed if there is a trivial extension during the course, then there exists $j$ and $m$ such that 
$\dim \text{Hom}(F^m,s_j) \ge 2$.
Since $\bar P \to F^m$ is surjective, we deduce that $\dim \text{Hom}(\bar P,s_j) \ge 2$, a contradiction.    

By the combination with the second property $\text{Hom}(\bar P,s_j[1]) = 0$, 
we deduce that the formal deformation $\bar P$ is versal.
Indeed let $G \to F^m$ for some $m$ be any non-trivial extension by some $s_j$ 
corresponding to a non-trivial morphism $F^m \to s_j[1]$; we have 
\[
0 \to s_j \to G \to F^m \to 0.
\]
By an exact sequence
\[
\text{Hom}(\bar P, G) \to \text{Hom}(\bar P, F^m) \to \text{Hom}(\bar P, \bar s_j[1]) = 0
\]
we infer that the morphism $\bar P \to F^m$ can be lifted to a morphism $\bar P \to G$, so that $\bar P$ dominates this 
non-trivial extension.
Since a versal deformation can be obtained by a sequence of iterated non-trivial extensions, 
we conclude that $\bar P$ is a versal deformation.  
\end{proof}

In the case of Bridgeland and Van den Bergh, the non-commutative deformations in the null category 
$\mathcal{C}$ can be described by the following theorem:

\begin{Thm}\label{defBvdB}
Let $f: Y \to X$ be as in \S 3.

(A) Let $\{P_i\}_{i=0}^r$ and $\{s_j\}_{j=0}^r$ be the sets of indecomposable projective objects and simple objects
in ${}^{-1}\text{Perv}(Y/X)$ as in \S 3 (A).
Then the reduced tilting generator $P = \bigoplus_{i=0}^r P_i$ is relatively generated by global sections, i.e., the natural homomorphism $p: f^*f_*P \to P$ is surjective. 
Let $Q = \text{Ker}(p)$ be the kernel. 
Let $I$ be the two-sided ideal of $A = f_*\mathcal{E}nd(P)$ generated by endomorphisms of $P$
which can be factored in the form $P \to P_0 \to P$. 
Then the following hold:

\begin{enumerate}

\item $Q[1] \in {}^{-1}\text{Perv}(Y/X)$, 
and it is the versal deformation of the simple collection $\bigoplus_{j=1}^r s_j$.

\item The parameter algebra of the versal deformation $Q[1]$
is given by the following formula
\[
\text{End}(Q[1]) \cong A/I.
\]
\end{enumerate}

(B) Let $\{P'_i\}_{i=0}^r$ and $\{s'_j\}_{j=0}^r$ be the sets of indecomposable projective objects and simple objects
in ${}^0\text{Perv}(Y/X)$ as in \S 3 (B).
Let $P' = \bigoplus_{i=0}^r P'_i$ be the reduced tilting generator, 
let $p': f^*f_*P' \to P'$ be the natural homomorphism, and 
let $Q' = \text{Coker}(p')$ be the cokernel. 
Let $I'$ be the two-sided ideal of $A' = f_*\text{End}(P')$ generated by endomorphisms of $P'$
which can be factored in the form $P' \to P'_0 \to P'$. 
Then the following hold:

\begin{enumerate}

\item $Q' \in {}^0\text{Perv}(Y/X)$, 
and it is the versal deformation of the simple collection $\bigoplus_{j=1}^r s'_j$.

\item The parameter algebra of the versal deformation $Q'$
is given by the following formula
\[
\text{End}(Q') \cong A'/I'.
\]
\end{enumerate}
\end{Thm}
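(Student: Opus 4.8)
The plan is to carry the statement across the Bondal--Rickard equivalence $\Phi = R\mathrm{Hom}(P,-)$ of Theorem~\ref{BR} and then to imitate the two-property mechanism of Theorem~\ref{recover}, the single new ingredient being the identification of the ideal $I$. I describe part (A) in detail; part (B) is entirely parallel, with $A' = A^o$, the cokernel of $p'$ replacing the kernel of $p$, and perversity $0$ replacing $-1$, and may alternatively be deduced from (A) through the duality $P_i' \cong P_i^*$, $s_j' \cong s_j[-1]$, $Q' = Q$ recorded in \S 3.

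First I would establish surjectivity of $p$ and locate the relevant objects in the heart. Since $X = \mathrm{Spec}(R)$ is affine, $f_*P$ has a presentation $R^{\oplus b} \to R^{\oplus a} \to f_*P \to 0$, so $f^*f_*P$ is the cokernel of a map $\mathcal O_Y^{\oplus b} \to \mathcal O_Y^{\oplus a} = P_0^{\oplus a}$; using $R^pf_*\mathcal O_Y = 0$ for $p>0$ and $f_*\mathcal O_Y = R$ this gives $R^1 f_*(f^*f_*P) = 0$ and $f_*(f^*f_*P) \cong f_*P$. Surjectivity of $p$ is the relative global generation of the summands $P_i$, which I would read off from the defining sequences \eqref{def Pi} together with $(L_i,C_j) = \delta_{ij} \ge 0$. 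Because $\Gamma(Y,c) = \Gamma(X,f_*c) = 0$ for every $c \in \mathcal C$, one has $\mathrm{Hom}(\mathcal O_Y, c) = 0$ and hence $\mathrm{Hom}(f^*f_*P, \mathcal C) = 0$; by the description \eqref{Perv-1} the sheaf $f^*f_*P$ therefore lies in ${}^{-1}\mathrm{Perv}(Y/X)$. Finally, applying $f_*$ to $0 \to Q \to f^*f_*P \to P \to 0$ and comparing $f_*(f^*f_*P) \cong f_*P$ with $R^1f_*Q \hookrightarrow R^1f_*(f^*f_*P) = 0$ yields $f_*Q = R^1f_*Q = 0$, i.e. $Q \in \mathcal C$ and $Q[1] \in \mathcal C[1] \subset {}^{-1}\mathrm{Perv}(Y/X)$.

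Since all three terms now lie in the heart, the sheaf epimorphism $p$ becomes a short exact sequence $0 \to f^*f_*P \to P \to Q[1] \to 0$ in ${}^{-1}\mathrm{Perv}(Y/X)$, the perverse cokernel of $p$ being $Q[1]$. Applying the exact equivalence $\Phi$ produces a short exact sequence of right $A$-modules
\[
0 \to \Phi(f^*f_*P) \to A \to \Phi(Q[1]) \to 0,
\]
in which $\Phi(P) = A$ and $\Phi(P_0) = e_0 A$ for the idempotent $e_0$ of $P_0$. The crux is the computation of the image of $\Phi(f^*f_*P) \to A$: as $p$ factors as $P_0^{\oplus a} \twoheadrightarrow f^*f_*P \xrightarrow{p} P$, its image consists precisely of the endomorphisms of $P$ that factor through $P_0$, and under $\Phi$ these span $A e_0 A = I$. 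Hence $\Phi(Q[1]) \cong A/I$, and consequently $\mathrm{End}(Q[1]) \cong \mathrm{End}_A(A/I) \cong A/I$, which is assertion (2).

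To prove assertion (1) I would reuse the argument of Theorem~\ref{recover} with $Q[1]$ in the role of $\bar P$, the two required properties being $\dim\mathrm{Hom}(Q[1],s_j) = 1$ and $\mathrm{Hom}(Q[1],s_j[1]) = 0$ for $1 \le j \le r$. Both follow from the triangle $f^*f_*P \to P \to Q[1] \to$ once one observes that $\mathrm{Hom}(f^*f_*P, s_j[n]) = 0$ for all $n$: indeed $f^*f_*P$ is built from $\mathcal O_Y$ and $\mathrm{Hom}(\mathcal O_Y, s_j[n]) = H^{n+1}(\mathbf P^1, \mathcal O(-1)) = 0$ since $s_j = \mathcal O_{C_j}(-1)[1]$ with $C_j \cong \mathbf P^1$. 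The long exact sequence then gives $\mathrm{Hom}(Q[1], s_j[n]) \cong \mathrm{Hom}(P, s_j[n])$, which equals $k$ for $n = 0$ and $0$ otherwise. Because $\Phi(Q[1]) \cong A/I = \varprojlim (A/I)/\mathfrak m^n(A/I)$ has all composition factors among $S_1,\dots,S_r$ (the idempotent $e_0$ dies in $A/I$), the object $Q[1]$ is a limit of iterated extensions $F^m$ of $s_1,\dots,s_r$ admitting surjections $Q[1] \to F^m$. The first property, through the injections $\mathrm{Hom}(F^m,s_j) \hookrightarrow \mathrm{Hom}(Q[1],s_j)$ furnished by left-exactness of $\mathrm{Hom}(-,s_j)$, forces every extension to be non-trivial; the second, through $\mathrm{Hom}(Q[1],F^m) \to \mathrm{Hom}(Q[1],s_j[1]) = 0$, shows that $Q[1]$ dominates every non-trivial extension by an $s_j$. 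Exactly as in Theorem~\ref{recover}, this identifies $Q[1]$ as the versal deformation of $\bigoplus_{j=1}^r s_j$.

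The step I expect to demand the most care is the identification $\mathrm{im}(\Phi(f^*f_*P) \to A) = I$, since this is where the geometric kernel $Q$ is matched with the contraction algebra $A/I$: one must check not only that the image is contained in $I$ (clear from factoring through $P_0$) but that it exhausts $I$, which uses the exactness of $\Phi$ together with the fact that $P_0^{\oplus a} \twoheadrightarrow f^*f_*P$ remains an epimorphism in the perverse heart. A subsidiary point worth isolating is the control of the underived functors $f^*$ and $f_*$; verifying $f^*f_*P \in {}^{-1}\mathrm{Perv}(Y/X)$ at the outset is what converts the sheaf sequence into an honest short exact sequence in the heart and lets $\Phi$ be applied termwise without derived corrections.
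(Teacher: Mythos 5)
Your architecture tracks the paper's proof quite closely: you place $f^*f_*P$ and $Q[1]$ in the heart, convert the sheaf sequence into a short exact sequence in ${}^{-1}\text{Perv}(Y/X)$, rerun the two-property versality mechanism of Theorem~\ref{recover}, and identify the kernel of $\text{End}(P) \to \text{End}(Q[1])$ with $I$; your detour through $\Phi$ and $Ae_0A$ is a legitimate repackaging of the paper's computation with $h_*$. However, there is a genuine gap at exactly the step you flag and then assume: the claim that $P_0^{\oplus a} \twoheadrightarrow f^*f_*P$ ``remains an epimorphism in the perverse heart.'' This is not automatic, and in this very setting sheaf surjections between objects of the heart can fail to be heart-epimorphisms: the sheaf surjection $\mathcal{O}_C \to \mathcal{O}_y$ (for a point $y \in C$) has both terms in ${}^{-1}\text{Perv}(Y/X)$, yet it is a \emph{monomorphism} in the heart with cokernel $\mathcal{O}_C(-1)[1]$ --- the paper's own example in \S 5. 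Concretely, a surjection of sheaves $u: B \to C$ with $B,C$ in the heart and kernel $K$ is an epimorphism in the heart if and only if $\text{Ext}^1(P,K) = 0$, so your assertion is exactly the vanishing $\text{Hom}(P, G'[1]) = 0$ for $G' = \text{Ker}(f^*G_0 \to f^*f_*P) = \text{Im}(f^*G_1 \to f^*G_0)$. Proving this is the technical core of the paper's argument: one uses $\text{Hom}(P, f^*G_1[1]) = 0$ (valid because $f^*G_1$ is a sum of copies of $P_0$, which lies in the heart where $P$ is projective; equivalently $R^1f_*P^* = 0$ from the tilting condition) together with $\text{Hom}(P, \cdot\,[2]) = 0$ for sheaves, which needs the fiber-dimension-one hypothesis. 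Until this lemma is supplied, the identification $\text{im}(\Phi(f^*f_*P) \to A) = I$ is unproven, and with it both assertion (2) and your route to (1), since you use $\Phi(Q[1]) \cong A/I$ to exclude $s_0$ from the composition factors of $Q[1]$.

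A secondary point: your parenthetical that the containment $\text{im} \subset I$ is ``clear from factoring through $P_0$'' has the difficulty backwards. An endomorphism $g = p \circ \tilde g$ factors through the \emph{quotient} $f^*f_*P$ of $P_0^{\oplus a}$; concluding that it factors through $P_0^{\oplus a}$ itself \emph{is} the lifting problem above. The direction that is genuinely easy is the opposite one, $I \subset \text{im}$: any map $P_0 = f^*\mathcal{O}_X \to P$ factors through the counit $p: f^*f_*P \to P$ by adjunction. Finally, your claim that $\text{Hom}(f^*f_*P, s_j[n]) = 0$ for \emph{all} $n$ because $f^*f_*P$ is ``built from $\mathcal{O}_Y$'' overreaches: $f^*$ is only right exact, so one controls only a two-step presentation $f^*G_1 \to f^*G_0 \to f^*f_*P \to 0$; left-exactness of $\text{Hom}$ then gives the vanishing for $n = -1, 0$, which is all the versality argument requires, and is precisely how the paper argues.
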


\begin{proof}
(A) In the exact sequence (\ref{def Pi}), 
$L_i$ is relatively generated by global sections and $R^1f_*\mathcal{O}_Y = 0$.
Hence $\tilde P_i$ is also relatively generated by global sections.

We have an exact sequence
\begin{equation}\label{defQ}
0 \to Q \to f^*f_*P \to P \to 0
\end{equation}
in $(\text{coh}(Y))$.
We have $f_*f^*f_*P \cong f_*P$ by the projection formula.
Hence $f_*Q = 0$.
Since $R^1f_*\mathcal{O}_Y = 0$ and the fiber dimension is $1$, we deduce that $R^1f_*f^*f_*P = 0$.
Hence $R^1f_*Q = 0$.
Thus $Rf_*Q = 0$ and $Q[1] \in {}^{-1}\text{Perv}(Y/X)$.

As in the proof of Theorem~\ref{recover}, $Q[1]/\mathfrak m^nQ[1]$ for any $n$ can be expressed by a series of
(trivial or non-trivial) iterated extensions of the $s_j$ for $0 \le j \le r$.
We claim that $s_0$ does not appear in this series.
This follows from the following facts: 
\[
\text{Hom}(P_0,Q[1]) = R^1f_*Q = 0, 
\text{Hom}(P_0,s_0) \cong k, \text{Hom}(P_0,s_j[1]) = 0 \,\,\forall j.
\]
Indeed if $s_0$ appears in the series of extensions $\{F^m\}$ as in (\ref{extensions}), 
then $\text{Hom}(P_0, F^{m_0}) \ne 0$ 
for some $m_0$, since $\text{Hom}(P_0,s_0) \cong k$.
Since $\text{Hom}(P_0,s_j[1]) = 0$, the natural homomorphisms 
\[
\text{Hom}(P_0, F^{m+1}) \to \text{Hom}(P_0, F^m)
\]
are surjective for all $m \ge m_0$.
Then $\text{Hom}(P_0,Q[1]) = \varprojlim \text{Hom}(P_0,Q[1]/\mathfrak m^n Q[1]) \ne 0$, a contradiction.

Now we prove that $Q[1]$ is the versal deformation of the simple collection $\bigoplus_{j=1}^r s_j$.
By the argument of the proof of Theorem~\ref{recover}, it is sufficient to prove the following claim:
\[
\text{Hom}(Q[1],s_j) \cong k, \text{Hom}(Q[1],s_j[1]) = 0 \text{ for } 1 \le j \le r.
\]
Since $\text{Hom}(P,s_j) \cong k$ and $\text{Hom}(P,s_j[q]) = 0$ for $q \ne 0$, our claim is reduced to 
the assertion that $\text{Hom}(f^*f_*P,s_j[t]) = 0$ for $t=-1,0$ and $1 \le j \le r$.
This is equivalent to saying that $\text{Hom}(f^*f_*P,\mathcal{O}_{C_j}(-1)[t]) = 0$ for $t=0,1$.

Let $G_1 \to G_0 \to f_*P \to 0$ be an exact sequence with free $\mathcal{O}_X$-modules $G_i$ for $i=0,1$.
Then we have exact sequences $f^*G_1 \to G' \to 0$ and $0 \to G' \to f^*G_0 \to f^*f_*P \to 0$ for some $G'$.
Since $Rf_*\mathcal{O}_{C_j}(-1) \cong 0$, we have 
$\text{Hom}(f^*G_i,\mathcal{O}_{C_j}(-1)[t]) = 0$ for all $i,j,t$.
Then $\text{Hom}(G',\mathcal{O}_{C_j}(-1)) = 0$ and we obtain our claim.
 
\vskip 1pc

We prove the second assertion that $\text{End}(Q[1]) \cong A/I$.
The exact sequence (\ref{defQ}) in the category $(\text{coh}(Y))$ of coherent sheaves 
defines a distinguished triangle 
\[
f^*f_*P \to P \to Q[1] \to f^*f_*P[1]
\]
in $D^b(\text{coh}(Y))$.
Since $R^1f^*f_*P \cong 0$ and $\text{Hom}(f^*f_*P,\mathcal{C}) \cong \text{Hom}(f_*P,f_*\mathcal{C}) \cong 0$, 
we have $f^*f_*P \in {}^{-1}\text{Perv}(Y/X)$ by (\ref{Perv-1}).
Therefore the above distinguished triangle yields an exact sequence 
\[
0 \to f^*f_*P \to P \to Q[1] \to 0
\]
in the abelian category ${}^{-1}\text{Perv}(Y/X)$.
In particular, we have a surjective morphism $h: P \to Q[1]$ in ${}^{-1}\text{Perv}(Y/X)$.
The induced surjective morphism $P \to Q[1] \oplus s_0$ is 
a natural morphism from a versal deformation of a simple collection $\bigoplus_{j=0}^r s_j$ to another 
non-commutative deformation.
Let $h_*: \text{End}(P) \to \text{End}(Q[1])$ be the corresponding surjective homomorphism of 
the parameter rings.

Let $I_1 = \text{Ker}(h_*)$.
We have to prove that $I_1 = I$.
If an endomorphism $g: P \to P$ factors through $P_0$, then the composition $h \circ g: P \to Q[1]$ 
vanishes, since $\text{Hom}(P_0,Q[1]) = 0$.
Therefore $h_*(g) = 0$, and $I \subset I_1$.

Conversely, assume that $g \in I_1$, i.e., $h_*(g) = 0$.
Then there is the following commutative diagram in the category $(\text{coh}(Y))$:
\[
\begin{CD}
0 @>>> Q @>>> f^*f_*P @>p>> P @>>> 0 \\
@. @V0VV @VVV @VgVV \\
0 @>>> Q @>>> f^*f_*P @>p>> P @>>> 0.
\end{CD}
\]
By the diagram chasing, we find a homomorphism $\tilde g: P \to f^*f_*P$ such that 
$g$ is factored as $g = p \circ \tilde g$.

Let $G_1 \to G_0 \to f_*P \to 0$ be an exact sequence with locally free $G_i$ as in the first part of the proof,
and let $G' = \text{Im}(f^*G_1 \to f^*G_0)$.
We claim that the homomorphism $\tilde g: P \to f^*f_*P$ can be lifted to $\tilde g_0: P \to f^*G_0$.
Indeed, since $\text{Hom}(P,f^*G_1[1]) = 0$, we obtain $\text{Hom}(P,G'[1]) = 0$, because the fiber dimension of
$f$ is $1$ and that $f^*G_1$ and $G'$ are sheaves.
Then the homomorphism $\text{Hom}(P,f^*G_0) \to \text{Hom}(P,f^*f_*P)$ is surjective.
Therefore $g$ is factored through a direct sum of $P_0$, hence $g \in I$.
Therefore $I = I_1$, and the theorem is proved.

\vskip 1pc

(B) We have exact sequences
\begin{equation}\label{defQ'}
\begin{split}
&0 \to H_1 \to P' \to Q' \to 0 \\
&0 \to H_2 \to f^*f_*P' \to H_1 \to 0
\end{split}
\end{equation}
in $(\text{coh}(Y))$ for some sheaves $H_1, H_2$.
We have $f_*f^*f_*P' \cong f_*P'$ by the projection formula.
Hence $f_*f^*f_*P' \cong f_*H_1 \cong f_*P'$.
Since $R^1f_*f^*f_*P' = 0$ and the fiber dimension is $1$, we deduce that $R^1f_*H_1 = 0$.
Hence $f_*Q' = 0$.
Since $R^1f_*P' = 0$, we have $R^1f_*Q' = 0$.
Therefore $Rf_*Q' = 0$ and $Q' \in {}^0\text{Perv}(Y/X)$.

As in the proof of case (A), $Q'$ can be expressed by a series of
iterated extensions of the $s'_j$ for $1 \le j \le r$, since
$\text{Hom}(P'_0,Q') = 0$.

Now we prove that $Q'$ is the versal deformation of the simple collection $\bigoplus_{j=1}^r s'_j$.
It is sufficient to prove the following claim:
$\text{Hom}(Q',s'_j) \cong k$ and $\text{Hom}(Q',s'_j[1]) = 0$ for $1 \le j \le r$.
We have $\text{Hom}(P',s'_j) \cong k$ and $\text{Hom}(P',s'_j[q]) = 0$ for $q \ne 0$.
Since $H_1$ is a quotient of a direct sum of $P'_0$, we have $\text{Hom}(H_1,s'_j) = 0$ for $1 \le j \le r$.
Therefore we have our claim and the versality of $Q'$.

\vskip 1pc

We prove the second assertion that $\text{End}(Q') \cong A'/I'$.
Since $R^1f_*H_1 = 0$, 
we have also $H_1 \in  {}^0\text{Perv}(Y/X)$ by (\ref{Perv0}).
Hence the first exact sequence of (\ref{defQ'}) is an exact sequence in ${}^0\text{Perv}(Y/X)$.
In particular, we have a surjective homomorphism $h': P' \to Q'$ in ${}^0\text{Perv}(Y/X)$, which is 
a homomorphism of non-commutative deformations.
Let $h'_*: \text{End}(P') \to \text{End}(Q')$ be the corresponding homomorphism of the parameter rings of the
deformations.

Let $I'_1 = \text{Ker}(h'_*)$.
We have to prove that $I'_1 = I'$.
If an endomorphism $g: P' \to P'$ factors through $P'_0$, then the composition $h' \circ g: P' \to Q'$ 
vanishes, since $\text{Hom}(P_0,Q') = 0$.
Therefore $h'_*(g) = 0$, and $I' \subset I'_1$.

Conversely, assume that $g \in I'_1$.
Then there is the following commutative diagram in the category $(\text{coh}(Y))$:
\[
\begin{CD}
0 @>>> H_1 @>>> P' @>>> Q' @>>> 0 \\
@. @VVV @VgVV @V0VV \\
0 @>>> H_1 @>{p_1}>> P' @>>> Q' @>>> 0.
\end{CD}
\]
By the diagram chasing, we find a homomorphism $g_1: P' \to H_1$ such that 
$g$ is factored as $g = p_1 \circ g_1$.
Since $f_*f^*f_*P' \cong f_*H_1$ and $R^1f_*f^*f_*P' = 0$, we have $Rf_*H_2 = 0$, hence
$H_2 \in {}^0\text{Perv}(Y/X)$.
Thus the second sequence in (\ref{defQ'}) is also exact in ${}^0\text{Perv}(Y/X)$.
Since $P'$ is a projective object, we have $\text{Hom}(P',H_2[1]) = 0$.
Hence $\text{Hom}(P',f^*f_*P') \to \text{Hom}(P',H_1)$ is surjective, and $g_1$ is lifted to 
$\tilde g: P' \to f^*f_*P'$.

Let $G_1 \to G_0 \to f_*P \to 0$ be an exact sequence with free sheaves $G_i$ as in part (A) of the proof,
and let $G' = \text{Im}(f^*G_1 \to f^*G_0)$.
Since $R^1f_*f^*G_1 = 0$, we have $R^1f_*G' = 0$, and $G' \in {}^0\text{Perv}(Y/X)$.
Then $\text{Hom}(P',G'[1]) = 0$, and $\text{Hom}(P',f^*G_0) \to \text{Hom}(P',f^*f_*P')$ is surjective, 
and $\tilde g$ is lifted to a morphism through a direct sum of $P'_0$.
Thus $I'_1 \subset I'$, and this completes the proof.
\end{proof}

\begin{Cor}\label{opposite}
(1) $Q \cong Q'$.

(2) $A/I \cong A'/I' \cong (A/I)^o$, where the last term is an opposite ring.
\end{Cor}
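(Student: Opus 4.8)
The goal is to prove Corollary~\ref{opposite}: that $Q \cong Q'$, and consequently that $A/I \cong A'/I' \cong (A/I)^o$. The plan is to exploit the duality relationship already recorded at the end of \S 3, namely $P'_i \cong P_i^* = \mathcal{H}om(P_i,\mathcal{O}_Y)$ and $A' \cong A^o$, and to match the two defining exact sequences for $Q$ and $Q'$ under this duality.

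First I would establish $Q \cong Q'$ directly. Recall that $Q$ is defined in part (A) of Theorem~\ref{defBvdB} as the kernel of the surjection $p: f^*f_*P \to P$, while $Q'$ is the cokernel of $p': f^*f_*P' \to P'$. The key observation is that the functor $\bullet \mapsto \mathcal{H}om(\bullet,\mathcal{O}_Y)$, combined with the relation $P'_i \cong P_i^*$, sends the defining data of one construction to the other. More precisely, I would dualize the exact sequence $0 \to Q \to f^*f_*P \to P \to 0$. Since $f^*f_*P$ and $P$ are locally free, applying $\mathcal{H}om(\bullet,\mathcal{O}_Y)$ yields a four-term sequence relating $P^* = P'$ and $(f^*f_*P)^* \cong f^*(f_*P)^*$, and the relevant $\mathcal{E}xt^1$ term is precisely $\mathcal{E}xt^1(Q,\mathcal{O}_Y)$. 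The claim $Q \cong Q'$ amounts to identifying this $\mathcal{E}xt$-sheaf, supported on the fiber, with $Q'$ itself; one checks that $\mathcal{H}om(Q,\mathcal{O}_Y) = 0$ because $Q$ is supported on the curve $C$ and has no torsion-free part, so the dualization produces exactly the sequence defining $Q'$ as a cokernel. The remark immediately following the statement of Theorem~\ref{defBvdB} in the introduction already asserts $Q = Q'$, so the content here is to make the duality identification precise.

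Next I would deduce the algebra isomorphisms. Given $Q \cong Q'$, the two parameter-algebra computations from Theorem~\ref{defBvdB} give $A/I \cong \text{End}(Q[1])$ and $A'/I' \cong \text{End}(Q')$. Since $\text{End}(Q[1]) \cong \text{End}(Q)$ (shifting by $[1]$ is an autoequivalence on endomorphism rings) and $Q \cong Q'$, we obtain $A/I \cong A'/I'$ immediately. For the opposite-ring statement, I would invoke $A' \cong A^o$ together with the observation that the ideal $I'$ corresponds to $I$ under this anti-isomorphism: both $I$ and $I'$ are the two-sided ideals generated by endomorphisms factoring through the distinguished projective summand ($P_0$ respectively $P'_0$), and the duality $P_i \mapsto P_i^*$ sends $P_0 = \mathcal{O}_Y$ to $P'_0 = \mathcal{O}_Y$, so the factorization condition is preserved. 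Hence the anti-isomorphism $A \to A'$ carries $I$ onto $I'$, inducing an anti-isomorphism $A/I \to A'/I'$, i.e. $A'/I' \cong (A/I)^o$. Combined with $A/I \cong A'/I'$ from the first part, this gives $A/I \cong (A/I)^o$.

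The main obstacle I anticipate is the careful bookkeeping in the dualization step that establishes $Q \cong Q'$, specifically controlling the $\mathcal{E}xt$-sheaves. Because $Q$ is a coherent sheaf supported (after the construction) on a neighborhood of the fiber and annihilated by $Rf_*$, I must verify that $\mathcal{H}om(Q,\mathcal{O}_Y)$ vanishes and that $\mathcal{E}xt^1(Q,\mathcal{O}_Y)$ is precisely the cokernel sheaf $Q'$, rather than some extension mixing in lower or higher $\mathcal{E}xt$ terms; this requires knowing the local homological behaviour of $Q$ along $C$, using that $Y$ is smooth and the fibers are one-dimensional so that the relevant local dimensions are controlled. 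Once the sheaf-level duality is pinned down, the algebra statements follow formally from the identifications already proved in Theorem~\ref{defBvdB} and the relation $A' \cong A^o$, so the remaining steps are routine.
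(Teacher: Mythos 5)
Your part (2) is correct and is essentially the paper's own argument: once $Q \cong Q'$ is known, $A/I \cong \text{End}(Q[1]) \cong \text{End}(Q') \cong A'/I'$, and the transpose anti-isomorphism $A \to A'$, $g \mapsto {}^tg$, sends endomorphisms factoring through $P_0 = \mathcal{O}_Y$ to endomorphisms factoring through $P'_0 = \mathcal{O}_Y$, hence carries $I$ onto $I'$ and induces $(A/I)^o \cong A'/I'$.

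The gap is in part (1), where the dualization argument fails concretely. First, $f^*f_*P$ is not locally free in general: $f_*P$ is a module over the (typically singular) ring $R$ and is not free --- for a $3$-fold flopping contraction $f_*P_1$ is a non-free maximal Cohen--Macaulay module --- so the terms $\mathcal{E}xt^i(f^*f_*P,\mathcal{O}_Y)$, $i \ge 1$, do not drop out of your long exact sequence. Second, and decisively, the proposed identification $\mathcal{E}xt^1(Q,\mathcal{O}_Y) \cong Q'$ is false. In the birational case $Q$ is indeed torsion, but it is supported on the exceptional curve, which has codimension $2$ in the smooth $3$-fold $Y$; hence $\mathcal{H}om(Q,\mathcal{O}_Y) = \mathcal{E}xt^1(Q,\mathcal{O}_Y) = 0$, while $Q' \ne 0$ (for the Atiyah flop $Q' \cong \mathcal{O}_C(-1)$): the duality you are after lives in $\mathcal{E}xt^2$, not $\mathcal{E}xt^1$. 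In the non-birational case allowed in \S 3, e.g.\ $Y = \mathbf{P}^1 \times X$ with $P = \mathcal{O}_Y \oplus (\mathcal{O}_{\mathbf{P}^1}(1) \boxtimes \mathcal{O}_X)$, one computes $Q \cong Q' \cong \mathcal{O}_{\mathbf{P}^1}(-1) \boxtimes \mathcal{O}_X$, a line bundle, so that $\mathcal{H}om(Q,\mathcal{O}_Y) \ne 0$ and $\mathcal{E}xt^1(Q,\mathcal{O}_Y) = 0$; both your vanishing claim and your identification fail there as well. Note also that appealing to the sentence after the theorem in the Introduction (``Indeed we have \dots $Q = Q'$'') is circular: that sentence is precisely the corollary being proved. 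The paper's proof of (1) avoids sheaf duality entirely: since $s_j \cong s'_j[1]$ in $D^b(\text{coh}(Y))$, and since NC deformations are built from iterated (universal) extensions whose $\text{Ext}^1$-groups and defining distinguished triangles are computed in $D^b(\text{coh}(Y))$ --- independently of whether the objects are regarded in the heart ${}^{-1}\text{Perv}(Y/X)$ or ${}^0\text{Perv}(Y/X)$ --- the versal deformations of $\bigoplus_{j} s_j$ and of $\bigoplus_{j} s'_j$ coincide up to shift; as $Q[1]$ and $Q'$ are these versal deformations by Theorem~\ref{defBvdB}, uniqueness gives $Q \cong Q'$. You should replace your dualization step by this argument, or else be prepared to prove genuine duality statements for the non-locally-free sheaf $f_*P$ and for codimension-$2$ supports, which is a substantially harder route.
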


\begin{proof}
(1) We have $s_j \cong s'_j[1]$ for $1 \le j \le r$ in $D^b(\text{coh}(Y))$.
Though the non-commutative deformations of the $s_j$ and $s'_j[1]$ are considered in different abelian categories, 
their deformations are the same.
Indeed the extension group $\text{Ext}^1(a,b)$ for $a,b \in D^b(\text{coh}(Y))$ is independent of the 
abelian categories containing $a,b$.
The corresponding distinguished triangles in $D^b(\text{coh}(Y))$ determine the extensions.
Therefore we have $Q \cong Q'$.

(2) We have $A/I \cong A'/I'$ by (1).
On the other hand, we have an order-reversing bijection $A \to A'$ which sends an endomorphism
$g: P \to P$ to its transpose ${}^tg: P^* \to P^*$.
If $g$ is factored as $P \to \mathcal{O}_Y \to P$, then ${}^tg$ is factored as
$P^* \to \mathcal{O}_Y \to P^*$.
Therefore the isomorphism $A^o \to A'$ induces an isomorphism $(A/I)^o \to A'/I'$. 
\end{proof}

\begin{Expl}[\cite{Donovan-Wemyss1}~Example~1.3]
Let $X = \text{Spec}k[[u,v,x,y]]/(u^2+v^2y-x(x^2+y^3))$ and let $f:Y \to X$ be a small crepant resolution.
Then $A/I \cong k\langle \langle x,y \rangle \rangle/(xy+yx,x^2-y^3)$.
\end{Expl}

The following is a generalization of a result of Donovan and Wemyss (\cite{Donovan-Wemyss3}):

\begin{Cor}\label{isolated}
Assume in addition that $f$ is a birational morphism.
Then $f$ is an isomorphism outside the closed fiber if and only if 
the parameter algebra of the versal deformation $A/I$ of the simple collection $\bigoplus_{j=1}^r s_j$
is finite dimensional as a vector space over the base field.  
\end{Cor}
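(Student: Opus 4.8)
The plan is to reduce the finiteness of $\dim_k A/I$ to a statement about the $R$-support of $A/I$, and then to identify that support with the image under $f$ of the exceptional locus. First I would record that, by Theorem~\ref{defBvdB}(A), $A/I \cong \text{End}(Q[1])$, and that this is a finitely generated $R$-module because it is a quotient of $A = f_*\mathcal{E}nd(P)$, which is module-finite over $R$ by Theorem~\ref{BR}. For a finitely generated module $M$ over the Noetherian complete local ring $R$ with residue field $k$, one has $\dim_k M < \infty$ if and only if $M$ has finite length, if and only if $\text{Supp}_R(M) = \{x_0\}$, where $x_0$ is the unique closed point. Thus the corollary is equivalent to the assertion that $\text{Supp}_R(A/I) = \{x_0\}$ if and only if $f$ is an isomorphism over $X \setminus \{x_0\}$.

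Next I would identify $\text{Supp}_R(A/I)$ with $f(\text{supp}\,Q)$. Since $A/I \cong \text{End}(Q[1]) = f_*\mathcal{E}nd(Q[1])$ and $f$ is proper, the support of this module is the closed set $f(\text{supp}\,Q)$; more precisely, the formation of $\text{End}(Q[1])$ commutes with the flat base changes $R \to R_{\mathfrak p}$, so a point $\mathfrak p$ lies in $\text{Supp}_R(A/I)$ exactly when the restriction of $Q[1]$ to the fibre over $\mathfrak p$ is nonzero, the identity endomorphism then being nonzero. It therefore remains to prove that $\text{supp}\,Q$ is precisely the exceptional locus $\text{Exc}(f)$ of $f$, i.e. the locus where $f$ fails to be a local isomorphism.

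For $\text{supp}\,Q \subseteq \text{Exc}(f)$ I would use the defining sequence $0 \to Q \to f^*f_*P \to P \to 0$: over the open set where $f$ is an isomorphism the counit $f^*f_*P \to P$ is an isomorphism, so $Q = 0$ there. For the reverse inclusion, let $C'$ be any irreducible curve contracted by $f$. Since $Rf_*\mathcal{O}_Y = \mathcal{O}_X$, every fibre of $f$ has vanishing $H^1$ of its structure sheaf, so $C' \cong \mathbf{P}^1$; the object $t = \mathcal{O}_{C'}(-1)[1]$ is then nonzero and satisfies $Rf_*t = 0$. Because $P$ generates $D^b(\text{coh}(Y))$ by Definition~\ref{tilting}(2), we have $R\text{Hom}(P,t) \neq 0$, and since $t$ is supported on $C'$ this equals $R\text{Hom}_{C'}(P|_{C'}, \mathcal{O}_{\mathbf{P}^1}(-1)[1])$; its non-vanishing forces $P|_{C'}$ to contain a summand of positive degree. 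On the other hand $f^*f_*P$ is pulled back from $X$ and hence has degree $0$ on every contracted curve, so $f^*f_*P|_{C'}$ cannot be isomorphic to $P|_{C'}$. Therefore $f^*f_*P \to P$ is not an isomorphism along $C'$ and $Q \neq 0$ there, giving $\text{Exc}(f) \subseteq \text{supp}\,Q$.

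Combining these, $f$ is an isomorphism outside the closed fibre if and only if $\text{Exc}(f) \subseteq f^{-1}(x_0)$, if and only if $f(\text{supp}\,Q) = \{x_0\}$, if and only if $\text{Supp}_R(A/I) = \{x_0\}$, if and only if $\dim_k A/I < \infty$. I expect the main obstacle to be the reverse inclusion $\text{Exc}(f) \subseteq \text{supp}\,Q$ together with the base-change compatibility: one must guarantee that the contraction algebra genuinely detects every contracted curve lying over a point $x_1 \neq x_0$, that is, that $\text{End}(Q[1])$ does not vanish after localizing at $x_1$ even though $Q \neq 0$ over $x_1$. This is where coherence of $A/I$ over $R$ and flatness of $R \to R_{x_1}$, which ensure that $\text{End}$ commutes with localization on the base, do the essential work.
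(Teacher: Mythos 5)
Your proposal is correct, and it shares the paper's overall frame: reduce $\dim_k A/I<\infty$ to $\text{Supp}_R(A/I)=\{x_0\}$, identify $\text{Supp}_R(A/I)$ with $f(\text{supp}\,Q)$ via flat base change for $\text{End}$ (a step the paper leaves implicit but clearly uses), and note that where $f$ is an isomorphism the counit $f^*f_*P\to P$ is an isomorphism, so $Q$ vanishes there. But the crucial converse direction --- that the contraction algebra detects every point of the discriminant locus $D$ --- is argued by a genuinely different mechanism. The paper works in setting (B) with the cokernel $Q'=\text{Coker}(p')$ and argues numerically: over $x\in D$ the fiber is positive dimensional, $P'$ has negative degree along it, hence $P'$ is not relatively generated by global sections there and $Q'\neq 0$ near $x$; the statement for $A/I$ then comes via Corollary~\ref{opposite}. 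You stay in setting (A) with the kernel $Q$ and argue categorically: for a contracted curve $C'$, the nonzero object $\mathcal{O}_{C'}(-1)[1]$ must pair nontrivially with $P$ by the generation property of Definition~\ref{tilting}(2), so $P|_{C'}$ is a nontrivial bundle, whereas $f^*f_*P|_{C'}$ is trivial; since the counit is surjective, it cannot be an isomorphism on any neighborhood of $C'$, so $\text{supp}\,Q$ meets $C'$. Your route buys intrinsic generality: it uses only the defining property of a tilting generator and sidesteps the question, which the paper's proof glosses over, of why $P'$ should have negative degree on curves lying over non-closed points $x\neq x_0$. The paper's route buys brevity: in setting (B), ``not relatively globally generated near $x$'' is literally ``$\text{Coker}\neq 0$ near $x$,'' with no surjectivity bookkeeping needed.

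Two slips in your write-up, neither fatal. First, non-vanishing of $R\text{Hom}(P|_{C'},\mathcal{O}_{\mathbf{P}^1}(-1)[1])$ only forces a summand of \emph{nonzero} degree, not positive degree (positivity would require quoting the relative global generation of $P$ from Theorem~\ref{defBvdB}(A)); this does not matter, since what you actually play $P|_{C'}$ against is the \emph{triviality} of $f^*f_*P|_{C'}$, not merely its degree. Second, what your curve argument literally establishes is $D\subseteq f(\text{supp}\,Q)$, i.e.\ that $\text{supp}\,Q$ meets every contracted curve, rather than the stated pointwise inclusion $\text{Exc}(f)\subseteq\text{supp}\,Q$; but the weaker statement is exactly what your final chain of equivalences needs.
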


\begin{proof}
We prove that the cosupport $\text{Supp}(A'/I')$ of $I'$ coincides with the discriminant locus $D \subset X$ of $f$, 
the set of scheme theoretic points on $X$ over which $f$ is not an isomorphism.
Then it follows that $A'/I'$ is finite dimensional if and only if $D$ consists of an isolated point.

If $x \not\in D$, then $f$ is an isomorphism near $x$.
Then $f^*f_*P' \to P'$ is an isomorphism near $x$, and $Q' = 0$ near $x$.
Therefore $x \not\in \text{Supp}(A'/I')$.

Conversely, assume that $x \in D$. 
Since the fiber $f^{-1}(x)$ is positive dimensional and 
$P'$ has a negative degree along the fiber, 
it follows that $P'$ is not generated by relative global sections. 
Then $Q' \ne 0$ near $x$, and $x \in \text{Supp}(A'/I')$.
\end{proof}

\begin{Que}
Let $C \cong \mathbf{P}^1$ be a smooth rational curve embedded in a $3$-dimensional complex manifold $Y$.
If $C$ is contractible complex analytically by a proper bimeromorphic morphism $f: Y \to X$ 
which is an isomorphism on $Y \setminus C$, then the parameter algebra of the versal non-commutative deformation
of $C$ in $Y$ 
is finite dimensional by the corollary.
Conversely, one can ask the following question: 
if the parameter algebra of the versal non-commutative deformation of $C$ in $Y$ 
is finite dimensional, then is $C$ contractible by a proper bimeromorphic morphism?
\end{Que}


Graduate School of Mathematical Sciences, University of Tokyo,
Komaba, Meguro, Tokyo, 153-8914, Japan 

kawamata@ms.u-tokyo.ac.jp


\begin{thebibliography}{}

\bibitem{Bass}
Bass, Hyman.
{\em Algebraic K-theory}. 
W. A. Benjamin, Inc., New York-Amsterdam 1968 xx+762 pp.

\bibitem{Beilinson}
Beilinson, A. A.
{\em Coherent sheaves on $\mathbf{P}^n$ and problems in linear algebra}. 
Funktsional Anal. i Prilozhen, 12(1978), 68--69. 

\bibitem{BBD}
Beilinson, Alexander A.; Bernstein, Joseph; Deligne, Pierre.
{\em Faisceaux pervers}. 
Ast\'erisque (in French), 100 (1982), Soci\'et\'e Math\'ematique de France, Paris.

\bibitem{BB}
Bodzenta, Agnieszka; Bondal, Alexey.
{\em Flops and spherical functors}.
arXiv:1511.00665
	
\bibitem{Bondal}
Bondal, Alexey.
{\em Representations of associative algebras and coherent sheaves}. 
(Russian) Izv. Akad. Nauk SSSR Ser. Mat. 53 (1989), no. 1, 25--44; 
translation in Math. USSR-Izv. 34 (1989), no. 1, 23--42.

\bibitem{BB2}
Bondal, Alexey.; Van den Bergh, Michel.
{\em Generators and representability of functors in commutative and noncommutative geometry}.
Mosc. Math. J., 3:1 (2003), 1--36. 

\bibitem{Bridgeland}
Bridgeland, Tom.
{\em Flops and derived categories}. 
Invent. Math. 147 (2002), no. 3, 613--632.

\bibitem{BH}
Brown, K. A.; Hajarnavis, C. R. 
{\em Homologically homogeneous rings}.
Trans. Amer. Math. Soc. 281 (1984), 197--208.

\bibitem{Donovan-Wemyss1}
Donovan, Will; Wemyss, Michael.
{\em Noncommutative deformations and flops}. 
Duke Math. J. 165 (2016), no. 8, 1397--1474. 
	
\bibitem{Donovan-Wemyss2}
Donovan, Will; Wemyss, Michael.
{\em Twists and braids for general 3-fold flops}. 
arXiv:1504.05320

\bibitem{Donovan-Wemyss3}
Donovan, Will; Wemyss, Michael.
{\em Contractions and deformations}. 
arXiv:1511.00406 

\bibitem{GM}
Goresky, Mark; MacPherson, Robert.
{\em Intersection homology theory}.
Topology 19 (1980), no. 2, 135--162.

\bibitem{Hua}
Hua, Zheng.
{\em Contraction algebra and singularity of three-dimensional flopping contraction}.
arXiv:1610.05467

\bibitem{Hua-Toda}
Hua, Zheng; Toda, Yukinobu.
{\em Contraction algebra and invariants of singularities}.
International Mathematics Research Notices (2017): rnw333.

\bibitem{Hua-Zhou}
Hua, Zheng; Zhou, Gui-Song.
{\em Noncommutative Mather-Yau theorem and ita applications to 
Calabi-Yau algebras and homological minimal model program}.

\bibitem{Kapranov}
Kapranov, M. M.
{\em On the derived category of coherent sheaves on Grassmann manifolds}.
Izv. Akad. Nauk SSSR Ser. Mat., 48:1 (1984), 192--202; Math. USSR-Izv., 24:1 (1985), 183--192.

\bibitem{Kashiwara}
Kashiwara, M.
{\em Faisceaux constructibles et systemes holonomes d'equations aux derivees partielles lineaires 
a points singuliers reguliers}
Sem. Goulaouic-Schwartz, 1979--80, expose 19.

\bibitem{KMM}
Kawamata, Yujiro; Matsuda, Katsumi; Matsuki, Kenji. 
{\em Introduction to the minimal model problem}. 
in Algebraic Geometry Sendai 1985,
Advanced Studies in Pure Math. {\bf 10} (1987), 
Kinokuniya and North-Holland, 283--360. 

\bibitem{DK}
Kawamata, Yujiro.
{\em D-equivalence and K-equivalence}. 
J. Differential Geom. 61 (2002), no. 1, 147--171.

\bibitem{equi}
Kawamata, Yujiro.
{\em Equivalences of derived categories of sheaves on smooth stacks}.
Amer. J. Math. {\bf 126}(2004), 1057--1083.

\bibitem{SDG}
Kawamata, Yujiro.
{\em Birational geometry and derived categories}.
to appear in Surveys in Differential Geom.

\bibitem{NC}
Kawamata, Yujiro.
{\em On multi-pointed non-commutative deformations and Calabi-Yau threefolds}.
to appear in Compositio Math.

\bibitem{Mebkout}
Mebkout, Z.
{\em Sur le probleme de Riemann-Hilbert}.
Lecture Notes in Physics, 126 (1980), Springer, 99--110.

\bibitem{Rickard}
Rickard, Jeremy.
{\em Morita theory for derived categories}. 
J. London Math. Soc. (2) 39 (1989), no. 3, 436--456. 

\bibitem{Toda}
Toda, Yukinobu. 
{\em Moduli stacks of semistable sheaves and representations of Ext-quivers}.
to appear in Geometry and Topology.

\bibitem{VdBergh}
Van den Bergh, Michel.
{\em Three-dimensional flops and noncommutative rings}. 
Duke Math. J. 122 (2004), no. 3, 423--455. 

\end{thebibliography}
\end{document}